\newtheorem{theorem}{Theorem}[section]
\newtheorem{proposition}[theorem]{Proposition}
\newtheorem{corollary}[theorem]{Corollary}
\newtheorem{lemma}[theorem]{Lemma}
\theoremstyle{definition}
\newtheorem{definition}[theorem]{Definition}
\newtheorem{example}[theorem]{Example}
\newtheorem{remark}[theorem]{Remark}
\newcommand\Nint{\mathbb{N}}
\newcommand\alphabet{\mathcal{A}}
\newcommand\mctree{\mathcal{T}}
\newcommand\tree{t}
\newcommand\pn[1]{p_{#1}}
\newcommand\pna[2]{p_{#1;#2}}
\newcommand\pni[2]{p^{(#1)}_{#2}}
\newcommand\pnai[3]{p^{(#1)}_{#2;#3}}
\newcommand\pnap[2]{q_{#1;#2}}
\newcommand\pnaip[3]{q^{(#1)}_{#2;#3}}
\newcommand\pnab[2]{\bar{p}_{#1;#2}}
\newcommand\pnaib[3]{\bar{p}^{(#1)}_{#2;#3}}
\newcommand\Zint{\mathbb{Z}}
\newcommand\norm[1]{\lvert #1 \rvert}
\newcommand\dhxrightarrow[2][]{%
  \mathrel{\ooalign{$\xrightarrow[#1\mkern4mu]{#2\mkern4mu}$\cr%
  \hidewidth$\rightarrow\mkern4mu$}}
}
\begin{document}

\title{Stem and topological entropy on Cayley trees}
\author[Jung-Chao Ban]{Jung-Chao Ban}
\address[Jung-Chao Ban]{Department of Mathematical Sciences, National Chengchi University, Taipei 11605, Taiwan, ROC.}
\address{Math. Division, National Center for Theoretical Science, National Taiwan University, Taipei 10617, Taiwan. ROC.}
\email{jcban@nccu.edu.tw}

\author[Chih-Hung Chang]{Chih-Hung Chang*}
\thanks{*Author to whom any correspondence should be addressed.} %To whom correspondence should be addressed
\address[Chih-Hung Chang]{Department of Applied Mathematics, National University of Kaohsiung, Kaohsiung 81148, Taiwan, ROC.}
\email{chchang@nuk.edu.tw}

\author[Yu-Liang Wu]{Yu-Liang Wu}
\address[Yu-Liang Wu]{Department of Applied Mathematics, National Chiao Tung University, Hsinchu 30010, Taiwan, ROC.}
\email{s92077.am08g@nctu.edu.tw}

\author[Yu-Ying Wu]{Yu-Ying Wu}
\address[Yu-Ying Wu]{Department of Mathematics, National Central University, Taoyuan 32001, Taiwan, ROC.}
\email{107221003@cc.ncu.edu.tw}

\thanks{Ban and Chang are partially supported by the Ministry of Science and Technology, ROC (Contract No MOST 109-2115-M-004-002-MY2, 109-2115-M-390 -003 -MY3 and 109-2927-I-004-501).}

\begin{abstract}
We consider the existence of the topological entropy of shift spaces on a finitely generated semigroup whose Cayley graph is a tree. The considered semigroups include free groups. On the other hand, the notion of stem entropy is introduced. For shift spaces on a strict free semigroup, the stem entropy coincides with the topological entropy. We reveal a sufficient condition for the existence of the stem entropy of shift spaces on a semigroup. Furthermore, we demonstrate that the topological entropy exists in many cases and is identical to the stem entropy.
\end{abstract}
\maketitle
\tableofcontents

% -----------------------------------------------------
\section{Introduction}
Many simplified mathematical models were proposed to understand phase transition; one of the most famous ones refers to the Ising model. Consider a ferromagnetic metal piece, which consists of a massive number of atoms in the thermal equilibrium. Suppose, ideally, that these atoms are located at the sites of a crystal lattice $\mathbb{Z}^d$. Each atom shows a magnetic moment resulted from the angular moments, which is, for a simplified model, only capable of two orientations. The set of configurations is $X = \alphabet^{\mathbb{Z}^d}$, where $\alphabet = \{1, -1\}$ represents the orientations of spins ``up'' and ``down''. The Ising model is defined by specifying a Hamiltonian (or potential) describing the interaction between spins and then studying the corresponding Gibbs states. Remarkably, there is a unique Gibbs state for $d=1$, whereas for $d \geq 3$, there are infinitely many Gibbs states \cite{Georgii-2011}.

The notion of a Gibbs state (or a Gibbs measure) dates back to R.L.~Dobrushin (1968-1969) and O.E.~Lanford and D.~Ruelle (1969), who proposed it as a mathematical description of an equilibrium state of a physical system which consists of a vast number of interacting components \cite{Dobrushin-TPA1968, Dobrushin-FAA1968, Dobrushin-FAA1968a, Dobrushin-FAA1969, LR-CMP1969}. Gibbs and equilibrium states play a crucial role in the theory of thermodynamic formalism for dynamical systems. A classic example is the investigation of uniformly hyperbolic differential systems such as Anosov and Axiom A diffeomorphisms. The orbits of these systems can be encoded as infinite sequences of finite symbols; the collection of these symbolic sequences forms a superior symbolic dynamical system called a shift of finite type (also known as a topological Markov chain). After constructing the invariant measures, the study of their properties is yielded by the construction of equilibrium states in the sense of statistical mechanics, which turn out to be Gibbs states \cite{Bow-1975, Chernov-2002, Keller-1998, Ruelle-1978}. A remarkable fact is that a Gibbs state for a given type of interaction may not be unique; this, in physical systems, means a phase transition. On the other hand, equilibrium states are defined by a variational principle. More specifically, an equilibrium maximizes the system's entropy under the constraint of fixed mean energy. While a Gibbs state is always an equilibrium state, the reverse fails in general. However, equilibrium states are also Gibbs states provided the given potential function is regular enough \cite{Georgii-2011}.

Investigation of Gibbs states for physical models on a Cayley tree has been received considerable attention recently. One of many motivations is that, in the study of the Ising model on a Cayley tree, a new type of phase transition was revealed \cite{MZ-PRL1974, Mueller-Hartmann-ZPB1975, Mueller-Hartmann-ZPB1977}. Additionally, Zachary showed that, for ferromagnetic or anti-ferromagnetic systems on a Cayley tree, either the set of Gibbs states contains a single point or contains infinitely many points \cite{Zachary-AP1983, Zachary-SPA1985}. A classical construction of Gibbs states on a Cayley tree is the method of Markov random field theory and recurrent equations of this theory; new tools such as group theory, information flows on trees, and node-weighted random walks have been implemented in the modern theory of Gibbs states \cite{Rozikov-2013}.

Aside from the physical significance of systems on a Cayley tree, there are fruitful phenomena observed in these chaotic systems from the mathematical aspect. For instance, the topological conjugacy between two superior symbolic systems (shifts of finite type, to be precise) is decidable; irreducible shifts of finite type are chaotic in Devaney's sense; a stronger type of irreducibility is decidable. See \cite{AB-TCS2012, AB-TCS2013, BC-TAMS2017, BC-N2017, BCH+-JSP2019, CCF+-TCS2013, FF-2012} and the references therein for more details.

As a Gibbs measure maximizes the system's entropy on a crystal lattice $\mathbb{Z}^d$, it is of interest whether this remains true for the system on a Cayley tree. The variational principle for a system $T$ is described, when one considers trivial potential, as $h(T) = h(\mu)$, where $\mu$ is a Gibbs measure, and $h(T)$ and $h(\mu)$ stand for the topological and measure-theoretic entropies, respectively. In the theory of symbolic dynamical systems on a crystal lattice, the topological entropy is defined as the limit of orbits' contribution in the ball of finite volumes. Such a definition is well-defined since $\mathbb{Z}^d$ is an amenable group \cite{CC-2010}. However, the absence of F{\o}lner nets in a Cayley tree makes the definition of topological entropy a controversy; Petersen and Salama extended the study of topological entropy to systems on a rooted Cayley tree via the distribution of orbits on balls \cite{PS-TCS2018, PS-DCDS2020}. Notably, this is how a Gibbs measure for a ferromagnetic Ising model on a Cayley tree is constructed (cf.~\cite{Rozikov-2013}).

It is natural to generalize the definition of topological entropy to systems on a Cayley graph, which corresponds to a group. Under the circumstance of the existence of topological entropy, the problem of whether a Gibbs state is an equilibrium state then follows. In other words, the existence of the topological entropy is essential for the study of the variational principle of shift spaces on a Cayley graph. This motivates the primary concern of the present investigation.
\begin{flushleft}
    \textbf{Problem.} What kinds of groups ensure the existence of the topological entropy of systems on them?
\end{flushleft}

This paper focuses on the existence of topological entropy of systems on a Cayley graph corresponding to a class of finitely generated infinite semigroups. Let $G$ be a finitely generated infinite semigroup and let $S \subset G$ be a finite generating subset of $G$. Suppose a binary matrix $K$ indexed by $S$ describes the relations between any two generators. To be more specific, for $s, s' \in S$, $K(s, s') = 0$ if and only if $s s' = 1_G$, the identity element of $G$. Observe that the Cayley graphs of these groups include the rooted Cayley trees and the Bethe lattices (i.e., regular Cayley trees). In \cite{BCH-JAC2020}, the authors demonstrated that the topological entropy exists for Markov shifts (or topological Markov chains) on a Fibonacci-Cayley tree, which is a Cayley graph corresponding to a semigroup whose growth rate is the golden mean.

The investigation starts with introducing the notion of the \emph{stem entropy} of a shift space on $G$, which, roughly speaking, represents the contribution of complexity on each branch. The existence of the stem entropy follows from the condition that the matrix $K$ is primitive (\textbf{Theorem \ref{same}}) or is irreducible (\textbf{Proposition \ref{irreducible_generalization}}). Beyond that, we demonstrate the coincidence of the stem entropy and the topological entropy provided $K$ is a full matrix, which is equivalent to the condition that $G$ is a free semigroup.

Section 4 applies the existence of the stem entropy to demonstrate whether the topological entropy of a shift space on $G$ exists. Firstly, \textbf{Theorem \ref{thm:entropy_comparison}} reveals that the topological entropy of Markov shifts on $G$ exists provided $K$ has a full row, which is a generalization of \cite{BCH-JAC2020}. Another main result in this section addresses that, suppose the summation of each row of $K$ is identical, the topological entropy of a hom Markov shift on $G$ exists (\textbf{Theorem \ref{thm:hom-shift_free_group}}); hom shifts, initiated from physical systems, are symmetric and isotropic Markov shifts (cf.~\cite{CM-PJM2018}). It is remarkable that the topological entropy, once it exists, coincides with the stem entropy (Theorems \ref{thm:entropy_comparison} and \ref{thm:hom-shift_free_group} and Proposition \ref{prop:top=stem-Markov-free-group}). On the other hand, the regular Cayley tree satisfies the structure required in Theorem \ref{thm:hom-shift_free_group}. An elegant examination of the hard square shift (or the golden mean shift) on a binary free group goes to Piantadosi \cite{Piantadosi-DCDS2008}.
\begin{figure}[h]
    \centering
    \includegraphics[scale=0.6]{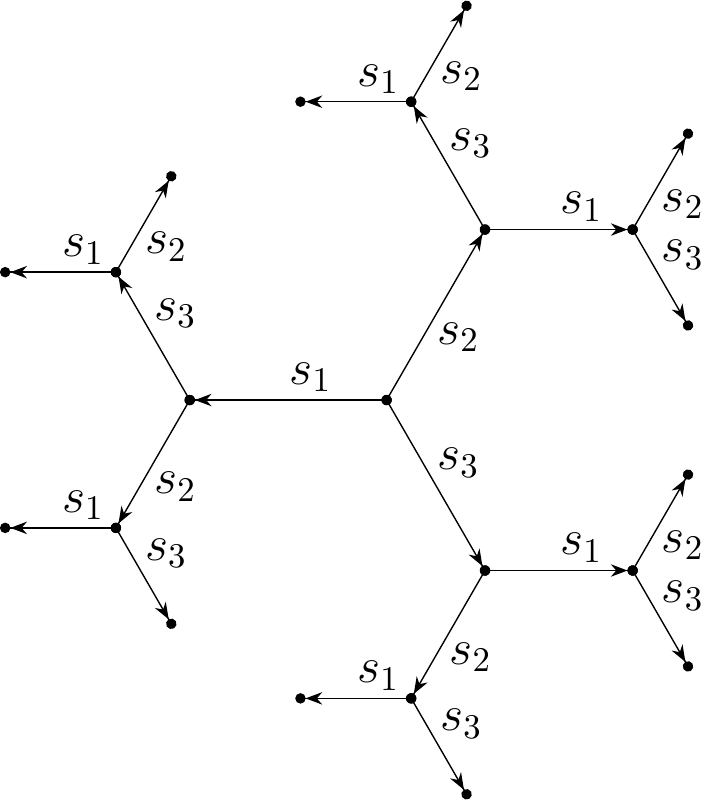}
    \caption{The Bethe lattice of order 3}
    \label{fig:Z_2_free_product_intro}
\end{figure}

Section 5 is devoted to graph representation of Markov shifts on $G$. Under the restriction of Markov shifts on a Cayley graph, the previous sections' results are unified as one. \textbf{Theorem \ref{thm:group_limit}} manifests that, generally speaking, if the graph representation of the system is strongly connected and has a pivot, then the topological entropy exists. Most importantly, the strong connectedness of the graph is equivalent to the irreducibility of a one-dimensional Markov shift; a strongly connected graph representation has a pivot if and only if its associated one-dimensional Markov shift is mixing. It is seen that these conditions are both decidable (\textbf{Proposition \ref{prop:pivot_checkable}}).

Some numerical experiments are carried out in the Appendix for the existence of topological entropy of general systems on $G$. Further elucidation is under preparation.

% -----------------------------------------------------
\section{Preliminaries}

Let $G$ be a finitely generated semigroup and $S_k = \{s_1, s_2, \cdots, s_k\} \subset G$ a finite generating subset of $G$. Suppose the relations between the generators of $G = \langle S_k | R \rangle$ are represented by a binary matrix $K$ indexed by $S_k$ as $R = \{s_i s_j: K(s_i, s_j) = 0\}$. In other words,
$$
s_i s_j = 1_G \quad \text{if and only if} \quad K(s_i, s_j) = 0,
$$
where $1_G$ is the identity element of $G$. The (right) \emph{Cayley graph} of $G$ with respect to $S_k$ is the directed graph $\mctree$ such that the vertex set is $G$ and the edge set is $E = \{(g, gs): g \in G, s \in S_k\}$. It follows that $\mctree$ is an infinite tree. On the other hand, every $g \in G$ has a unique minimal representation $g = g_1 g_2 \cdots g_n$ (with respect to $S_k$). The \emph{length} of the $g$, written as $\norm{g}$, is defined by
$$
|g| = \min \{n: g = g_1 g_2 \cdots g_n, g_i \in S_k\}.
$$
Obviously, $1_G$ is the only element of length $0$, and $g = g_1 g_2 \cdots g_n$, $g_i \in S_k$, is the unique minimal representation if $|g| = n$ and $K(g_i,g_{i+1})=1$ for $1 \leq i \leq n-1$. Such a minimal representation is assumed for every element throughout the paper unless mentioned otherwise. 

\begin{example}
(a) Let $S_2 = \{a, b\}$ and $K = E_2$ the full $2 \times 2$ matrix. Then $G$ is a free semigroup whose Cayley graph $\mctree$ is a binary rooted tree.

(b) Let $S_4 = \{s_1, s_2, s_3, s_4\}$ and $K \in \{0, 1\}^{4 \times 4}$ given by $K(s_i, s_j) = 0$ if and only if $i + j$ is even and $i \neq j$. Then $G = F_2$ is a free group of rank $2$. Figure \ref{fig:regular_tree_1} presents part of the Cayley graph of $F_2$.

(c) Suppose $S_k = \{s_1, s_2, \ldots, s_k\}, k \geq 2$, and $K$ is the $k \times k$ binary matrix given by $K(s_i, s_j) = 0$ if and only if $i = j$. It is seen that $G$ is a free product of $k$ cyclic groups of the second order, and its Cayley graph $\mctree = \Gamma^{k-1}$ is the Bethe lattice (regular Cayley tree) of order $k$.

(d) Let $S_2 = \{a, b\}$ and $K = \begin{pmatrix}
1 & 1 \\
1 & 0
\end{pmatrix}$ the golden mean matrix. The Cayley graph of $G$ is called the Fibonacci-Cayley tree in \cite{BCH-JAC2020} since the growth rate of $\{g \in G: |g| \leq n\}$ is the golden mean.
\end{example}

It is noteworthy that this class of semigroups covers many interesting examples with the exceptions of some simple ones nevertheless, such as $\mathbb{N}^2$ or cyclic groups.

% -----------------------------------------------------
\subsection{Shift spaces on a Cayley tree}

Let $\alphabet$ be a set of finite alphabet. A \emph{labeled tree} (or \emph{configuration}) is a function $\tree: G \to \alphabet$ for which $\tree_g:=\tree(g)$ is the label attached to $g \in G$, and the set $\alphabet^G$ consisting of all labeled trees is called the \emph{full tree shift} or \emph{full shift} on $G$. A \emph{pattern} is a function $u:\alphabet^{H} \to \alphabet$ for some finite set $H \subset G$, where $s(u) := H$ is the \emph{support} of $u$. We say that a pattern $u$ is \emph{accepted} by $\tree \in \alphabet^G$ if there exists $g \in G$ such that $\tree_{g h} = u_h$ for every $h \in s(u)$; otherwise, $t$ \emph{rejects} $u$. A subset $X \subseteq \alphabet^G$ is a \emph{tree shift} (or \emph{shift space on} $G$) if there exists a set of patterns $\mathcal{F}$ such that $t$ rejects any $u \in \mathcal{F}$ for all $t \in X$. We write $X = \mathsf{X}_{\mathcal{F}}$ and call $\mathcal{F}$ a forbidden set for $X$. A tree shift $X$ is a \emph{tree shift of finite type} (TSFT) if $X = \mathsf{X}_{\mathcal{F}}$ for some finite forbidden set $\mathcal{F}$. Let $\mathbf{A}=(A_1,A_2, \ldots, A_k)$ be a $k$-tuple of binary matrices indexed by $\alphabet$. A \emph{Markov tree shift} $X_{\mathbf{A}} \subset \alphabet^{G}$ is defined as
\[
X_{\mathbf{A}}:=\{\tree \in \alphabet^{G}: A_i(\tree_g,\tree_{g s_i}) = 1 \text{ for all } g \in G, \norm{g s_i} = \norm{g}+1\}.
\]
It follows from the definition that a Markov tree shift is a TSFT; conversely, every TSFT is topological conjugate with a Markov tree shift \cite{BC-TAMS2017}.

% -----------------------------------------------------
\subsection{Topological entropy and stem entropy}

Let $G$ be a finitely generated semigroup and $\alphabet$ a finite alphabet. Suppose that $X \subset \alphabet^G$ is a tree shift. We introduce the following notions which are fundamental units in the present elaboration.

For $g \in G$ and $n \geq 0$, denote by $\Delta^{(g)}_{n}$ the $n$-ball centered at $g$ as
$$
\Delta^{(g)}_{n} = \{g h: h \in G, |h| \leq n\},
$$
with $\Delta^{(1_G)}_{n}$ simply denoted by $\Delta_{n}$. On the other hand, we define the $n$-semiball centered at $g$ as
$$
\bar{\Delta}^{(g)}_{n} = \{g h: h \in G, |h| \leq n, \text{ and } |gh|=|g| + |h|\}.
$$
Observe that $\bar{\Delta}^{(g)}_n$ is the initial $n$-subtree rooted at $g$. Furthermore, let
$$
\bar{\Delta}^{(s_i)+}_{n} = \{s_i h: h \in G, |h| \leq n, \text{ and } |s_ih|=1 + |h|\} \cup \{1_G\}
$$
denote the $i$th branch of the Cayley graph with the root $1_G$.

\begin{flushleft}
    \textbf{Notation.} Suppose $g \in G, a \in \alphabet$, and $n$ is a nonnegative integer.
\begin{enumerate}
	\item $B_n^{(g)} := \{u \in \alphabet^{\Delta^{(g)}_n}: u \text{ is accepted by some } \tree \in X\}$;
	\item $B_{n;a}^{(g)} := \{u \in B_n^{(g)}: u_g = a\}$;
	\item $B_n := B_n^{(1_G)}, B_{n;a} := B_{n;a}^{(1_G)}$;
	\item $C_n^{(g)} := \{u \in \alphabet^{\bar{\Delta}^{(g)}_n}: u \text{ is accepted by some } \tree \in X\}$;
	\item $C_n^{(s_i)+} := \{u \in \alphabet^{\bar{\Delta}^{(s_i)+}_n}: u \text{ is accepted by some } \tree \in X\}$;
	\item $C_{n;a}^{(g)} := \{u \in C_n^{(g)}: u_{g} = a\}$;
	\item $C_{n;a}^{(s_i)+} := \{u \in C_n^{(s_i)+}: u_{1_G} = a\}$;
	\item $\pni{g}{n} := |C_n^{(g)}|, \pnai{g}{n}{a} := |C_{n;a}^{(g)}|$, $\pn{n} := |B_n|, \pna{n}{a} := |B_{n;a}|$;
	\item $q_n^{(s_i)} := |C_n^{(s_i)+}|, q_{n;a}^{(s_i)} := |C_{n;a}^{(s_i)+}|$.
\end{enumerate}
\end{flushleft}

Suppose $G = F_2$ is a free group generated by $S_4=\{s_1,s_2,s_1^{-1},s_2^{-1}\}$ for instance. Figures \ref{fig:regular_tree_1} and \ref{fig:regular_tree_2} illustrate $C_{2;a}^{(s_1)}$ and $C_{3;a}^{(s_1)+}$ respectively.

\begin{figure}[t]
	\centering
	\includegraphics[width=0.7\textwidth]{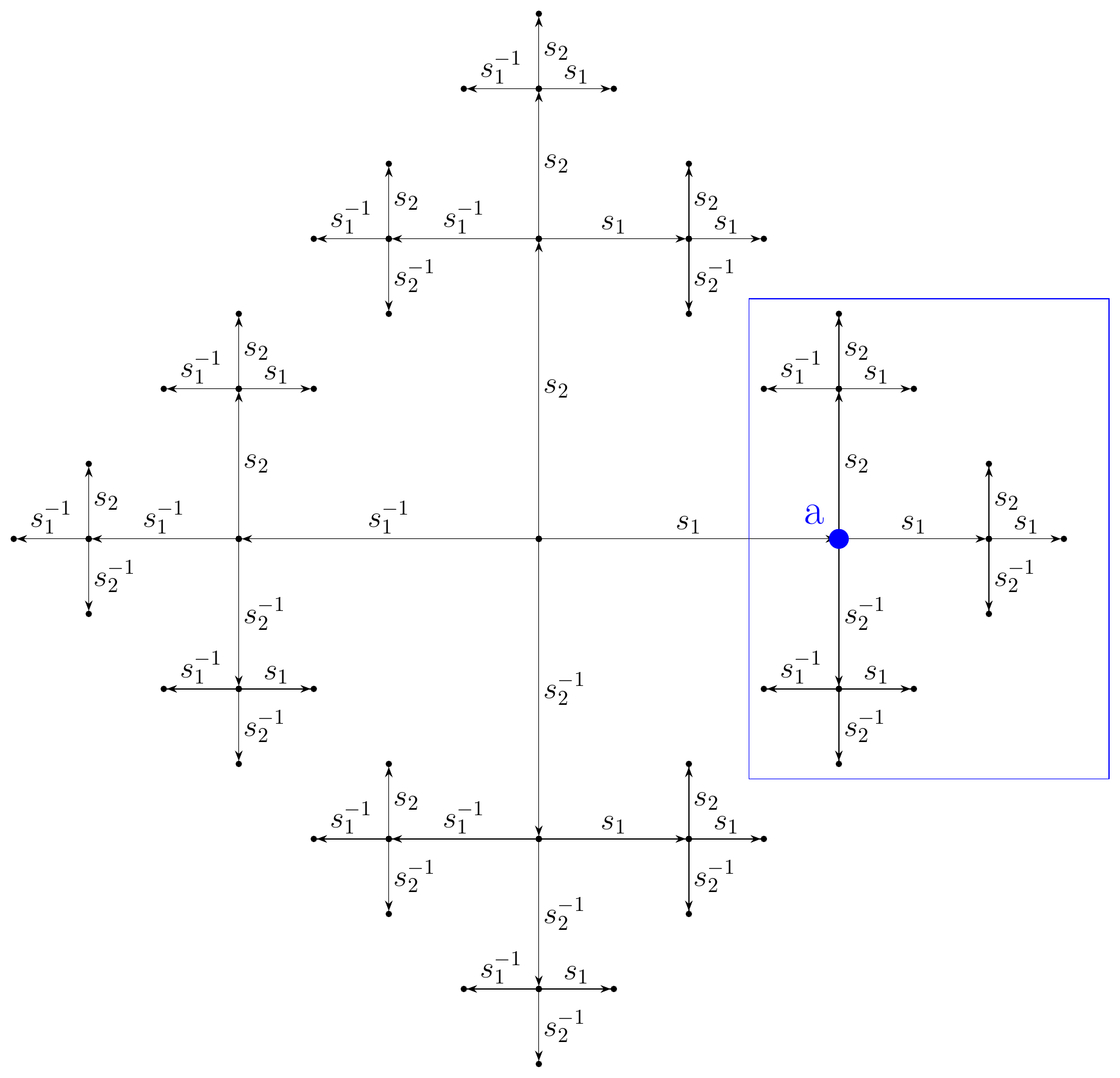}
	\caption{The support of patterns in $C_{2;a}^{(s_1)}$ is a $2$-semiball centered at $s_1$}
	\label{fig:regular_tree_1}
\end{figure}
\begin{figure}[t]
	\centering
	\includegraphics[width=0.7\textwidth]{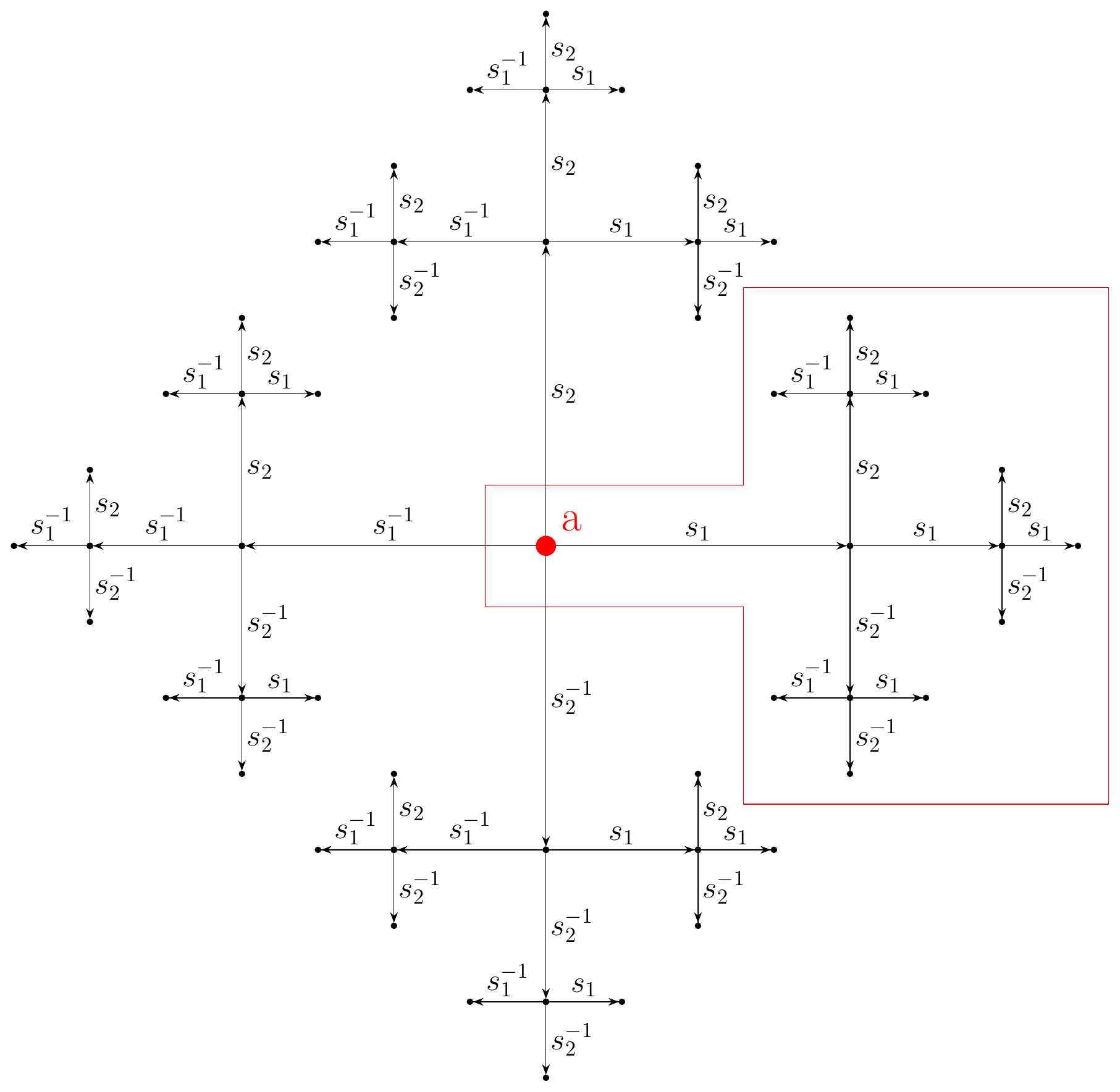}
	\caption{The support of patterns in $C_{3;a}^{(s_1)+}$}
	\label{fig:regular_tree_2}
\end{figure}

\begin{definition}
Suppose $G$ is a finitely generated semigroup and $\alphabet$ is a finite alphabet. Let $X \subset \alphabet^G$ be a tree shift and $g \in G$.
\begin{enumerate}
    \item[(a)] The \emph{$i$th-stem entropy} of $X$ is defined as
\begin{equation}\label{eq:ith-stem-entropy}
h^{(s_i)}=h^{(s_i)}(X):=\limsup_{n \to \infty} \frac{\log \pni{s_i}{n}}{\norm{\bar{\Delta}^{(s_i)}_n}}.
\end{equation}
The \emph{stem entropy}, denoted by $h^{(s)}$, of $X$ exists if $h^{(s_i)} = h^{(s_j)}$ for all $i, j$.
    \item[(b)] The \emph{topological entropy} of $X$ is defined as
\begin{equation}\label{eq:topological-entropy}
h=h(X):=\lim_{n \to \infty} \frac{\log \pn{n}}{\norm{\Delta_n}}
\end{equation}
provided the limit exists.
\end{enumerate}
\end{definition}

\begin{remark}\label{rmk:free-semigroup-stem-equal-topological}
Suppose that $G$ is a strict semigroup; that is, no element in $G$ has an inverse element. A straightforward examination indicates that $h^{(s)} = h$ provided $h^{(s)}$ exists. Indeed, $B_n^{(g)} = C_n^{(g)}$ since $\Delta_n^{(g)} = \bar{\Delta}_n^{(g)}$ for $g \in G$ and $n \geq 0$. Later, Theorem \ref{same} yields a sufficient condition for the existence of the stem entropy for a class of semigroups.
\end{remark}

Suppose that $G = \langle S_k | \rangle$ is a strict free semigroup of rank $k$; that is, $K = E_k$ is a full $k \times k$ matrix. The Cayley graph of $G$ is an infinite rooted tree such that every node has $k$ children. Petersen and Salama \cite{PS-TCS2018, PS-DCDS2020} demonstrated that the topological entropy \eqref{eq:topological-entropy} of a tree shift (i.e., a shift space on an infinite rooted tree) exists and that
\[
h = \inf_{n \to \infty} \frac{\log \pn{n}}{\norm{\Delta_n}}.
\]
In \cite{Piantadosi-DCDS2008}, Piantadosi considered the golden mean shift $X_{\mathbf{A},\mathbf{A}^t}$ on $F_2$, where $\mathbf{A} = (A, A)$ with $A = \begin{pmatrix}
1 & 1 \\ 1 & 0
\end{pmatrix}$
and $\mathbf{A}^t = (A^t, A^t)$ with $A^t$ the transpose of $A$. Recall that $F_2 = \langle S_4 | R \rangle$ such that $R$ is determined by $K(s_i, s_j) = 0$ if and only if $i + j$ is even and $i \neq j$. Piantadosi demonstrated the existence of the topological entropy of $X_{\mathbf{A},\mathbf{A}^t}$ via estimating the growth rate of $q_n^{(s_i)}$. The present paper generalizes Piantadosi's result to a class of Markov tree shifts on $F_l$ for $l \geq 2$. More precisely, we show that the limit of the stem entropy exists (Theorems \ref{same} and \ref{exists_inf}). Additionally, the topological entropy coincides with the stem entropy (Proposition \ref{prop:hom-shift_free_group}).

% -----------------------------------------------------
\section{Existence of Stem Entropy}

This section aims at the exposition of the existence of the stem entropy. A straightforward examination derives that the stem entropy, which does exist, is nothing more than the topological entropy provided $G$ is a strict semigroup. Therefore, the notion of the stem entropy can be seen as an extended discussion of the topological entropy whenever $G$ is not a strict semigroup. On the other hand, it is of interest to the interaction between the stem and topological entropies.

Let $G = \langle S_k | R \rangle$ be a finitely generated semigroup with generating subset $S_k = \{s_1, s_2, \ldots, s_k\}$. Suppose that the relation set $R$ is represented by a $k \times k$ binary matrix $K$ as follows:
$$
s_i s_j \in R \quad \text{if and only if} \quad K(s_i, s_j) = 0.
$$
With abusing the notation, we write $G = \langle S_k | K \rangle$ to specify the equivalence of $R$ and $K$. Let $\mathcal{A}$ be a finite alphabet and $X \subset \mathcal{A}^G$ a tree shift. The main result, existence of the stem entropy, of this section is split into two theorems. The following theorem reveals a class of semigroups such that the stem entropy of a shift space on which exists. Additionally, Theorem \ref{exists_inf} demonstrates that the limit in \eqref{eq:ith-stem-entropy} also exists. For the sake of simplification, the notations $S_k, K, G$, and $\mathcal{A}$ satisfy those conditions above in the remainder of this paper unless otherwise specified.

\begin{theorem}\label{same}
Suppose that $G = \langle S_k | K \rangle$ is a finitely generated semigroup, and $X \subset \mathcal{A}^G$ is a shift space on $G$. If $K$ is primitive, then the stem entropy of $X$ exists. In other words, for $1\leq i, j\leq k$,
\[
	\limsup_{m\rightarrow\infty}\frac{\log{\pni{s_i}{m}}}{|\bar{\Delta}^{(s_i)}_{m}|}=\limsup_{m\rightarrow\infty}\frac{\log{\pni{s_j}{m}}}{|\bar{\Delta}^{(s_j)}_{m}|}. \tag{A1} \label{eq:same}
\]
\end{theorem}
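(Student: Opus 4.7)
The plan is to exploit primitivity of $K$ through a disjoint decomposition of the semiball $\bar{\Delta}^{(s_i)}_{m+N}$, Perron--Frobenius asymptotics for the ball sizes, and a convex-combination argument on the limit superiors.

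First I would fix $N$ with $K^N$ entrywise positive, which exists by primitivity. For each $i$ I would write $\bar{\Delta}^{(s_i)}_{m+N}$ as the disjoint union of the stem $\bar{\Delta}^{(s_i)}_{N-1}$ and the sub-semiballs of radius $m$ rooted at the depth-$N$ vertices of the semiball. The number of depth-$N$ vertices whose last generator is $s_j$ equals $K^N(s_i, s_j)$, and by shift-invariance of $X$ the patterns accepted on any such sub-semiball correspond bijectively to patterns in $C^{(s_j)}_m$. Restricting an accepted pattern on $\bar{\Delta}^{(s_i)}_{m+N}$ to the stem and to each sub-semiball is an injection into the product of the local pattern sets, whence
\[
\pni{s_i}{m+N} \;\leq\; |\mathcal{A}|^{|\bar{\Delta}^{(s_i)}_{N-1}|} \prod_{j=1}^{k} \bigl(\pni{s_j}{m}\bigr)^{K^N(s_i,\, s_j)}.
\]

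Next I would take logarithms, divide by $|\bar{\Delta}^{(s_i)}_{m+N}|$, and pass to $\limsup_{m \to \infty}$. Perron--Frobenius applied to primitive $K$ gives $|\bar{\Delta}^{(s_i)}_m| \sim c\, v_i\, \mu^m$ with $\mu > 1$ the Perron eigenvalue and $v > 0$ the right Perron eigenvector, so $|\bar{\Delta}^{(s_j)}_m|/|\bar{\Delta}^{(s_i)}_{m+N}| \to v_j/(v_i \mu^N)$ and the constant first term vanishes. This yields
\[
h^{(s_i)} \;\leq\; \sum_{j=1}^{k} \alpha^{(i)}_j\, h^{(s_j)}, \qquad \alpha^{(i)}_j := \frac{K^N(s_i, s_j)\, v_j}{v_i\, \mu^N},
\]
and from $Kv = \mu v$ one has $\sum_j \alpha^{(i)}_j = 1$, while $\alpha^{(i)}_j > 0$ for every $i,j$ because $K^N > 0$ and $v > 0$. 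To conclude, set $M := \max_j h^{(s_j)}$ and pick $i^*$ attaining $M$; the chain $M = h^{(s_{i^*})} \leq \sum_j \alpha^{(i^*)}_j h^{(s_j)} \leq M$ forces equality in the convex combination, and the strict positivity of every $\alpha^{(i^*)}_j$ together with $h^{(s_j)} \leq M$ then forces $h^{(s_j)} = M$ for every $j$, which is exactly \eqref{eq:same}.

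The main obstacle I expect is setting up the upper bound in the first step cleanly for an arbitrary tree shift rather than just a Markov one: one must verify that restricting any accepted pattern to a sub-semiball still gives an accepted pattern and that the total restriction map over the disjoint pieces is injective. Once that is in place, the Perron--Frobenius step is routine, and the convex combination argument is the elegant point at which primitivity of $K$ becomes indispensable, since any zero entry in $K^N$ would break the strict positivity that drives the final equality.
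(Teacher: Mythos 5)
Your proposal is correct and follows essentially the same route as the paper: the paper's Lemma 3.4(iii) is exactly your subadditive decomposition $\pni{s_i}{n+m+1}\leq \pni{s_i}{n}\prod_l(\pni{s_l}{m})^{K^{n+1}(s_i,s_l)}$, its Lemma 3.6 supplies the same positive, sum-to-one limiting coefficients (derived from the exact cardinality identity for $|\bar{\Delta}^{(s_i)}_{n+m+1}|$ rather than from $K^Nv=\mu^N v$, but to the same effect), and the concluding convex-combination argument at a maximizing index is identical. The only cosmetic difference is that you bound the stem's contribution by $|\mathcal{A}|^{|\bar{\Delta}^{(s_i)}_{N-1}|}$ where the paper uses $\pni{s_i}{n}$; both are constants in $m$ and vanish in the limit.
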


The following series of lemmas are prerequisite for proving the theorem. We start with a property possessed by a primitive matrix. Recall that a nonnegative matrix is primitive if it is eventually positive.

\begin{lemma}\label{eig}
Let $N$ be a $k\times k$ primitive binary matrix and let $\mu$ be its largest eigenvalue. Then, for $1\leq i, j\leq k$, there exists $c=c(i, j)>0$ such that
\[
	\lim_{n\rightarrow\infty}\frac{N^n(i, j)}{c \mu^n}=1.
\]
Furthermore, if $\mu'$ is an eigenvalue of $N$ such that the eigenspace corresponding to $\mu'$ contains a positive vector, then $\mu'>1$.
\end{lemma}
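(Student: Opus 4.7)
The plan is to deduce both parts from the Perron--Frobenius theorem for primitive nonnegative matrices. Since $N$ is primitive, standard Perron--Frobenius theory supplies a simple, strictly dominant eigenvalue $\mu>0$ together with strictly positive right and left eigenvectors $v,w>0$, and all other eigenvalues have modulus strictly less than $\mu$. Normalizing so that $w^{T}v=1$, the rank-one convergence
\[
\mu^{-n}N^{n}\ \longrightarrow\ vw^{T} \qquad (n\to\infty)
\]
holds entrywise. Taking the $(i,j)$-entry and setting $c(i,j):=v_{i}w_{j}>0$ immediately yields $N^{n}(i,j)/(c(i,j)\mu^{n})\to 1$, which is the first assertion.

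For the second assertion, I would first reduce to showing $\mu>1$ by noting that any eigenvalue $\mu'$ possessing a strictly positive eigenvector must equal the Perron value. Indeed, if $Nu=\mu' u$ with $u>0$, pairing against the positive left Perron eigenvector $w$ gives $\mu'\,w^{T}u=w^{T}Nu=\mu\,w^{T}u$, and $w^{T}u>0$ forces $\mu'=\mu$.

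To obtain the strict inequality $\mu>1$, I would combine primitivity with the fact that $N$ is binary (hence every power $N^{m}$ has integer entries). Choose $m\geq 1$ with $N^{m}$ strictly positive; then every entry of $N^{m}$ is a positive integer, and so is at least $1$. Normalize the Perron eigenvector $v$ so that $\min_{i}v_{i}=1$, and pick any index $i_{0}$ attaining this minimum. Evaluating $N^{m}v=\mu^{m}v$ at coordinate $i_{0}$,
\[
\mu^{m}\ =\ \mu^{m}v_{i_{0}}\ =\ \sum_{j}(N^{m})(i_{0},j)\,v_{j}\ \geq\ \sum_{j}v_{j}\ \geq\ k,
\]
so $\mu\geq k^{1/m}>1$ whenever $k\geq 2$, which is the setting relevant to the paper. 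The only delicate point in the proof is this last inequality: irreducibility alone does not suffice to guarantee $\mu>1$, and one genuinely needs both that $N$ has binary (hence integer) entries and that it is at least $2\times 2$, so that the strictly positive integer matrix $N^{m}$ has row sums no smaller than $k$.
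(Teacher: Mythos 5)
Your proof is correct and rests on the same foundation as the paper's, which disposes of the lemma by citing the Perron--Frobenius theorem (Theorems 4.2.3 and 4.5.12 of \cite{LM-1995}) for the rank-one asymptotics $\mu^{-n}N^{n}\to vw^{T}$ and for the identification $\mu'=\mu$. The only genuine divergence is in how $\mu>1$ is obtained: the paper argues that primitivity forces $N^{n}(i,j)\to\infty$, which combined with $N^{n}(i,j)\sim c\mu^{n}$ yields $\mu>1$, whereas you bound $\mu^{m}=\sum_{j}N^{m}(i_{0},j)v_{j}\geq\sum_{j}v_{j}\geq k$ directly from the positivity and integrality of $N^{m}$. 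Your route is slightly more self-contained and, more importantly, makes explicit the hypothesis that both arguments silently use, namely that $N$ is an integer matrix of size $k\geq 2$: the $1\times 1$ matrix $N=(1)$ is primitive with $\mu=1$ and a positive eigenvector, so the lemma as literally stated requires $k\geq 2$ (harmless here, since $K$ generates an infinite semigroup, but worth flagging, as the paper's phrase ``$N$ is primitive implies $N^{n}(i,j)$ tends to infinity'' is not true without it).
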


\begin{proof}
The Lemma is a consequence of the Perron-Frobenius theorem. The proof of the asymptotic behavior of $N^n$ is could be found in \cite[Theorem 4.5.12]{LM-1995}. As for $\mu' > 1$, \cite[Theorem 4.2.3]{LM-1995} assures that $\mu'=\mu$, and $N$ is primitive implies $N^{n}(i,j)$ tends to infinity. These together with \cite[Theorem 4.5.12]{LM-1995} leads to $\mu' > 1$.
\end{proof}

\begin{lemma}\label{lim}
Let $\{a_n\}_{n=1}^{\infty}$, $\{c_n\}_{n=1}^{\infty}$ be real sequences and $\{b_n\}_{n=1}^{\infty}$, $\{d_n\}_{n=1}^{\infty}$ be positive real sequences. Suppose
$$
\lim_{n\rightarrow\infty}\frac{a_n}{b_n} = \lim_{n\rightarrow\infty}\frac{c_n}{d_n} = L.
$$
Then
	\[
		\lim_{n\rightarrow\infty}\frac{a_n+c_n}{b_n+d_n}=L.
	\]
Suppose, furthermore, that $\lim_{n\rightarrow\infty} \sum_{j=1}^{n} b_j = +\infty$. Then
	\[
		\lim_{n\rightarrow\infty}\frac{\sum_{j=1}^n a_j}{\sum_{j=1}^n b_j}=L.
	\]
\end{lemma}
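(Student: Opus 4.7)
The plan is to handle the two assertions separately, each by a direct $\varepsilon$--$N$ argument that exploits the positivity of the denominator sequences.

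For the first assertion, I would fix $\varepsilon > 0$ and use the two hypothesized limits to choose $N$ so that for every $n \geq N$ we have $|a_n - L b_n| < \varepsilon b_n$ and $|c_n - L d_n| < \varepsilon d_n$ (this rearrangement is legitimate because $b_n, d_n > 0$). Adding the two inequalities via the triangle inequality yields
\[
|(a_n + c_n) - L(b_n + d_n)| \leq |a_n - L b_n| + |c_n - L d_n| < \varepsilon (b_n + d_n),
\]
and dividing by the positive quantity $b_n + d_n$ gives $\left|\frac{a_n + c_n}{b_n + d_n} - L\right| < \varepsilon$ for all $n \geq N$. Since $\varepsilon$ was arbitrary, the first limit equals $L$.

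For the second assertion (which, as stated, only requires the $a_n/b_n$ hypothesis and the divergence of $\sum b_j$), the approach is a standard weighted Cesàro / Stolz–Cesàro argument. Fix $\varepsilon > 0$ and choose $N$ so that $|a_j - L b_j| < \varepsilon b_j$ for all $j \geq N$. Splitting the sum at $N$,
\[
\left|\sum_{j=1}^{n} a_j - L \sum_{j=1}^{n} b_j\right| \leq \underbrace{\sum_{j=1}^{N-1} |a_j - L b_j|}_{=: M} + \varepsilon \sum_{j=N}^{n} b_j \leq M + \varepsilon \sum_{j=1}^{n} b_j.
\]
Dividing by the positive quantity $\sum_{j=1}^{n} b_j$, which by hypothesis tends to $+\infty$, I obtain
\[
\left|\frac{\sum_{j=1}^{n} a_j}{\sum_{j=1}^{n} b_j} - L\right| \leq \frac{M}{\sum_{j=1}^{n} b_j} + \varepsilon,
\]
so $\limsup_{n \to \infty}\left|\frac{\sum a_j}{\sum b_j} - L\right| \leq \varepsilon$. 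Since $\varepsilon > 0$ is arbitrary, the claimed limit is $L$.

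There is no real obstacle here; the lemma is a pair of elementary analytic facts whose only subtlety is making sure that the positivity of $b_n$ (respectively $b_n + d_n$) is what allows one to turn the ratio hypothesis into a linear inequality without absolute values around the denominator. Both parts follow purely from the triangle inequality and the divergence condition, and no deeper structure (such as primitivity of $K$ from the preceding lemma) enters the proof.
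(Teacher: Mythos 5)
Your proof is correct and follows essentially the same route as the paper's: split the partial sums at a finite index $N$, bound the tail using the ratio hypothesis, and let the divergence of $\sum_{j=1}^n b_j$ absorb the finitely many initial terms. The only cosmetic difference is that you run a single two-sided estimate with $|a_j - Lb_j| < \varepsilon b_j$, whereas the paper proves the $\limsup \le M$ and $\liminf \ge m$ bounds separately for $M > L > m$.
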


\begin{proof}
The equality $\lim_{n\rightarrow\infty}\frac{a_n+c_n}{b_n+d_n}=L$ is immediate and thus the proof is omitted. For the second part, to emphasize the importance of $\lim_{n\rightarrow\infty}\frac{\sum_{j=1}^n a_j}{\sum_{j=1}^n b_j}$ we provide the following detailed discussion.

We prove that for all real numbers $M > L$ and $m < L$, $\limsup_{n\rightarrow\infty}\frac{a_1+\dots+a_n}{b_1+\dots+b_n} \le M$ and $\liminf_{n\rightarrow\infty}\frac{a_1+\dots+a_n}{b_1+\dots+b_n} \ge m$, and thus the lemma is proved. By definition of limit superior, there is a positive integer $N_1$ such that $\frac{a_n}{b_n} < M$ for all $n \geq N_1$.\\
Then for $n>N_1$,
\[
	a_n<M b_n
\]
and
\begin{align*}
	\frac{a_1+\dots+a_n}{b_1+\dots+b_n}
	&=\frac{a_1+\dots+a_{N_1}}{b_1+\dots+b_n}+\frac{a_{N_1+1}+\dots+a_n}{b_1+\dots+b_n}\\
	&<\frac{a_1+\dots+a_{N_1}}{b_1+\dots+b_n}+M\frac{b_{N_1+1}+\dots+b_n}{b_1+\dots+b_n}\\
	&<\frac{a_1+\dots+a_{N_1}}{b_1+\dots+b_n}+M.
\end{align*}
Since $N_1$ is fixed and $\lim_{n\rightarrow\infty} \sum_{j=1}^{n} b_j = +\infty$, we have
\[
	\limsup_{n\rightarrow\infty}\frac{a_1+\dots+a_n}{b_1+\dots+b_n}\leq\lim_{n\rightarrow\infty}(\frac{a_1+\dots+a_{N_1}}{b_1+\dots+b_n}+M)=M.
\]
As for the limit inferior part, there exists a positive integer $N_2$ such that $\frac{a_n}{b_n}>m$ for all $n\geq N_2$. Therefore, \begin{align*}
	\displaystyle\frac{a_1+\dots+a_n}{b_1+\dots+b_n}
	&=\frac{a_1+\dots+a_{N_2}}{b_1+\dots+b_n}+\frac{a_{N_2+1}+\dots+a_n}{b_1+\dots+b_n}\\
	&>\frac{a_1+\dots+a_{N_2}}{b_1+\dots+b_n}+m\frac{b_{N_2+1}+\dots+b_n}{b_1+\dots+b_n}.
\end{align*}
By applying $\lim_{n\rightarrow\infty} \sum_{j=1}^{n} b_j = +\infty$ again, we have that $\lim_{n \to \infty} \frac{b_{N_2+1}+\dots+b_n}{b_1+\dots+b_n} = 1$ and that
\[
	\liminf_{n\rightarrow\infty}\frac{a_1+\dots+a_n}{b_1+\dots+b_n}\geq\lim_{n\rightarrow\infty}\left[\frac{a_1+\dots+a_{N_3}}{b_1+\dots+b_n}+m\frac{b_{N_2+1}+\dots+b_n}{b_1+\dots+b_n}\right]=m.
\]
The proof is thus complete.
\end{proof}

The following lemma gives an explicit expression for the number of nodes in the initial $n$-subtree.

\begin{lemma}\label{dec}
Suppose that $G = \langle S_k | K \rangle$ is finitely generated. For $1\leq i\leq k$, $m\geq0$, $n\geq0$ and $q\geq1$, the following statements are true:
\begin{enumerate}[(i)]
	\item
	\[
		|\bar{\Delta}^{(s_i)}_{n}|=1+\sum_{l=1}^{n}\sum_{j=1}^k K^l(s_i,s_j).
	\]
	\item
	\[
		|\bar{\Delta}^{(s_i)}_{n+q(m+1)}|=
		|\bar{\Delta}^{(s_i)}_{n}|+\sum_{l=1}^k\sum_{j=0}^{q-1} K^{n+j(m+1)+1}(s_i, s_l)|\bar{\Delta}^{(s_l)}_{m}|.
	\]
	\item
	\[
		\pni{s_i}{n+q(m+1)}\leq 
		\pni{s_i}{n}\prod_{l=1}^k
		(\pni{s_l}{m})^{\sum_{j=0}^{q-1} K^{n+j(m+1)+1}(s_i, s_l)}.
	\]

\end{enumerate}
\end{lemma}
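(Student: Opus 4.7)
My plan is to establish (i) by a direct path-counting argument, and then to derive (ii) and (iii) from a single geometric decomposition of $\bar{\Delta}^{(s_i)}_{n+q(m+1)}$ into the initial $n$-subtree rooted at $s_i$ together with $q$ successive layers of maximal depth-$m$ subtrees.

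For (i), I will note that every node at relative depth $l\geq 1$ from $s_i$ corresponds uniquely to a minimal representation $s_i g_1 \cdots g_l$ satisfying $K(s_i,g_1)=1$ and $K(g_j,g_{j+1})=1$, by the uniqueness of minimal representations recorded in the preliminaries. These are exactly the length-$l$ walks in the adjacency graph of $K$ that start at $s_i$, so their number is $\sum_{j=1}^k K^l(s_i,s_j)$. Summing over $l=1,\dots,n$ and including $s_i$ itself gives the identity.

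For (ii), any node of relative depth strictly greater than $n$ in $\bar{\Delta}^{(s_i)}_{n+q(m+1)}$ belongs to a unique maximal depth-$m$ subtree whose root $h$ sits at relative depth $n+j(m+1)+1$ for some $j\in\{0,\dots,q-1\}$, namely its closest ancestor at that depth. Since the shape of $\bar{\Delta}^{(h)}_m$ depends only on the last letter $s_l$ of $h$'s minimal representation, $|\bar{\Delta}^{(h)}_m|=|\bar{\Delta}^{(s_l)}_m|$. The number of such chunk roots ending in $s_l$ is, by the walk-counting argument of (i), precisely $K^{n+j(m+1)+1}(s_i,s_l)$. Summing across $j$ and $l$ and adding $|\bar{\Delta}^{(s_i)}_n|$ yields the formula.

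Part (iii) lifts the same decomposition to patterns. A pattern in $C^{(s_i)}_{n+q(m+1)}$ is determined by its restrictions to $\bar{\Delta}^{(s_i)}_n$ and to each chunk $\bar{\Delta}^{(h)}_m$, and dropping the compatibility between chunk roots and their parents can only inflate the count; hence an upper bound of $\pni{s_i}{n}$ times the product of $|C^{(h)}_m|$ across chunks suffices. The step I expect to be the main obstacle is verifying $|C^{(h)}_m|\leq \pni{s_l}{m}$ whenever $h$ ends in $s_l$. For this I would argue that if $h=h^{(1)}\cdots h^{(r-1)} s_l$ is the minimal form of $h$ and $u\in C^{(h)}_m$ is witnessed by some $t\in X$ at shift $g$, then the push-forward $u\circ\phi^{-1}\in C^{(s_l)}_m$ (where $\phi$ is the natural shape-preserving bijection $\bar{\Delta}^{(h)}_m\to \bar{\Delta}^{(s_l)}_m$) is witnessed by the same $t$ at shift $gh^{(1)}\cdots h^{(r-1)}$, since $gh^{(1)}\cdots h^{(r-1)} s_l h'' = g h h''$ for every relevant $h''$. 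This gives an injection $C^{(h)}_m\hookrightarrow C^{(s_l)}_m$. Multiplying the chunkwise bounds and regrouping exponents by the last letter $s_l$ of the chunk root produces the stated inequality.
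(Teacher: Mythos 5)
Your proof is correct and rests on the same decomposition the paper uses: $\bar{\Delta}^{(s_i)}_{n+q(m+1)}$ splits into the initial $n$-subtree plus copies of $m$-subtrees whose roots are counted by powers of $K$, with part (iii) obtained by bounding the pattern count on the union by the product of the counts on the pieces. The only differences are organizational --- the paper peels off one band of depth $m+1$ at a time by induction on $q$, whereas you identify all $q$ bands at once --- and you make explicit the injection $C^{(h)}_m \hookrightarrow C^{(s_l)}_m$ that the paper's proof of (iii) leaves implicit.
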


\begin{proof}
$\textbf{(i)}$
The length of each element in $\bar{\Delta}^{(s_i)}_{n}$ is at most $n$. There is one element of length $0$, and for $1\leq l\leq n$, there exists $\sum_{j=1}^k K^l(s_i, s_j)$ elements of lenth $l$ in $\bar{\Delta}^{(s_i)}_{n}$. Hence there are $1+\sum_{j=1}^k K(s_{i},s_{j})+\dots+\sum_{j=1}^k K^n(s_{i},s_{j})$
elements in total.

$\textbf{(ii)}$
We prove it by induction on $q$. Since $\bar{\Delta}^{(s_i)}_{n+m+1}$ can be decomposed into disjoint union of $1$ copy of $\bar{\Delta}^{(s_i)}_{n}$, $K^{n+1}(s_{i}, s_{1})$ copies of $\bar{\Delta}^{(s_1)}_{m}$,\dots,$K^{n+1}(s_{i},s_{k})$ copies of $\bar{\Delta}^{(s_k)}_{m}$, thus the result holds when $q=1$.

Suppose the statement is true for some $q-1 \in \mathbb{N}$. Applying the result in the induction step, we obtain
\[
	|\bar{\Delta}^{(s_i)}_{n+(q-1)(m+1)+m+1}|=|\bar{\Delta}^{(s_i)}_{n+(q-1)(m+1)}|+\sum_{l=1}^k K^{n+(q-1)(m+1)+1}(s_{i},s_{l})|\bar{\Delta}^{(s_l)}_{m}|.
\]
The induction hypothesis gives that
\[
	|\bar{\Delta}^{(s_i)}_{n+(q-1)(m+1)}|=
	|\bar{\Delta}^{(s_i)}_{n}|+\sum_{l=1}^k\sum_{j=0}^{q-2} K^{n+j(m+1)+1}(s_{i},s_{l})|\bar{\Delta}^{(s_l)}_{m}|.
\]
Therefore,
\begin{align*}
	|\bar{\Delta}^{(s_i)}_{n+q(m+1)}|&=|\bar{\Delta}^{(s_i)}_{n+(q-1)(m+1)+m+1}|\\
	&=|\bar{\Delta}^{(s_i)}_{n+(q-1)(m+1)}|+\sum_{l=1}^k K^{n+(q-1)(m+1)+1}(s_{i},s_{l})|\bar{\Delta}^{(s_l)}_{m}|\\
	&=|\bar{\Delta}^{(s_i)}_{n}|+\sum_{l=1}^k\sum_{j=0}^{q-2} K^{n+j(m+1)+1}(s_{i},s_{l})|\bar{\Delta}^{(s_l)}_{m}|\\ & \qquad \qquad +\sum_{l=1}^k K^{n+(q-1)(m+1)+1}(s_{i},s_{l})|\bar{\Delta}^{(s_l)}_{m}|\\
	&=|\bar{\Delta}^{(s_i)}_{n}|+\sum_{l=1}^k\sum_{j=0}^{q-1} K^{n+j(m+1)+1}(s_{i},s_{l})|\bar{\Delta}^{(s_l)}_{m}|.
\end{align*}
The proof is complete.

$\textbf{(iii)}$
We prove it by induction on $q$.
Recall that $\bar{\Delta}^{(s_i)}_{n+m+1}$ is a disjoint union of $1$ copy of $\bar{\Delta}^{(s_i)}_{n}$, $K^{n+1}(s_{i},s_{1})$ copies of $\bar{\Delta}^{(s_1)}_{m}$, $K^{n+1}(s_{i},s_{2})$ copies of $\bar{\Delta}^{(s_2)}_{m}$ and so on, where the number of acceptable patterns of $\bar{\Delta}^{(s_i)}_{n}$ is $\pni{s_i}{n}$, and the number of acceptable patterns of $\bar{\Delta}^{(s_j)}_{m}$ is $\pni{s_j}{m}$ for $1\leq j\leq k$. The number $\pni{s_i}{n+m+1}$ could not exceed $\pni{s_i}{n}\prod_{l=1}^k(\pni{s_l}{m})^{(K^{n+1})(s_{i},s_{l})}$,  the result is valid when $q=1$.

Now we assume the result holds for some $q-1 \in \mathbb{N}$. Then
\begin{align*}
	\pni{s_i}{q(m+1)+n}&=\pni{s_i}{(q-1)(m+1)+n+m+1}\\
	&\leq \pni{s_i}{(q-1)(m+1)+n}\prod_{l=1}^k(\pni{s_l}{m})^{(K^{(q-1)(m+1)+n+1})(s_{i},s_{l})}\\
	&\leq \pni{s_i}{n}\prod_{l=1}^k(\pni{s_l}{m})^{\sum_{j=0}^{q-2}(K^{n+j(m+1)+1})(s_{i},s_{l})}\prod_{l=1}^k{(\pni{s_l}{m}})^{(K^{(q-1)(m+1)+n+1})(s_{i},s_{l})}\\
	&\leq \pni{s_i}{n}\prod_{l=1}^k(\pni{s_l}{m})^{\sum_{j=0}^{q-1}(K^{n+j(m+1)+1})(s_{i},s_{l})},
\end{align*}
the proof is complete.
\end{proof}

Aside from the elaboration of Lemmas \ref{eig}-\ref{dec}, the following lemma, which plays a crucial role in the proof of Theorem \ref{same}, further portrays the composition of every $m$-subtree in terms of all $n$-subtree when $m \ge n$.

\begin{lemma}\label{lim2}
Suppose that $G = \langle S_k | K \rangle$ is finitely generated and $K$ is primitive. For $m\geq0$ and $1\leq i, j\leq k$, the following statements are true:
\begin{enumerate}[(i)]
	\item
	\[
		\lim_{n\rightarrow\infty}\frac{|\bar{\Delta}^{(s_j)}_{n}|}{|\bar{\Delta}^{(s_i)}_{n+m+1}|}>0\hbox{ and }\sum_{l=1}^k\lim_{n\rightarrow\infty}\frac{K^{m+1}(s_{i},s_{l})|\bar{\Delta}^{(s_l)}_{n}|}{|\bar{\Delta}^{(s_i)}_{n+m+1}|}=1.
	\]
	\item There exists $\gamma>0$ such that 
	\[
		\lim_{q\rightarrow\infty}\frac{\sum_{l=0}^{q-1} K^{r+l(m+1)+1}(s_{i},s_{j})}{|\bar{\Delta}^{(s_i)}_{q(m+1)+r}|}=\frac{\gamma}{\lambda^{m+1}-1}\hbox{ for all }r\geq0.
	\]
	\item For all $r\geq0$, 
	\[
		\sum_{j=1}^k\lim_{q\rightarrow\infty}\frac{\sum_{l=0}^{q-1} K^{r+l(m+1)+1}(s_{i},s_{j})|\bar{\Delta}^{(s_j)}_{m}|}{|\bar{\Delta}^{(s_i)}_{q(m+1)+r}|}=1.
	\]
\end{enumerate}
\end{lemma}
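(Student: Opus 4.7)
The plan is to combine the Perron--Frobenius asymptotics from Lemma~\ref{eig} with the combinatorial decomposition in Lemma~\ref{dec} to reduce each part to an essentially geometric computation. Let $\lambda$ denote the Perron eigenvalue of $K$; since $K$ is primitive, $K^n(s_i,s_j)\to\infty$, and Lemma~\ref{eig} gives $\lambda>1$ together with $K^n(s_i,s_j)=c(i,j)\lambda^n(1+o(1))$ for some $c(i,j)>0$. The backbone of the whole argument is the single asymptotic
\[
|\bar{\Delta}^{(s_i)}_n|\sim C_i\lambda^n,\qquad C_i:=\frac{\lambda}{\lambda-1}\sum_{j=1}^k c(i,j)>0,
\]
obtained by substituting the Perron asymptotic into the explicit formula $|\bar{\Delta}^{(s_i)}_n|=1+\sum_{l=1}^n\sum_{j=1}^k K^l(s_i,s_j)$ of Lemma~\ref{dec}(i) and summing the geometric tail (a bookkeeping step conveniently carried out by the Ces\`aro-type Lemma~\ref{lim}).

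With this asymptotic in hand, part (i) is essentially immediate. The first limit equals $C_j/(C_i\lambda^{m+1})>0$. For the sum identity, apply Lemma~\ref{dec}(ii) with the roles of $m$ and $n$ interchanged and $q=1$:
\[
|\bar{\Delta}^{(s_i)}_{m+n+1}|=|\bar{\Delta}^{(s_i)}_m|+\sum_{l=1}^k K^{m+1}(s_i,s_l)\,|\bar{\Delta}^{(s_l)}_n|.
\]
Divide by $|\bar{\Delta}^{(s_i)}_{n+m+1}|$ and let $n\to\infty$; the isolated first term vanishes (fixed numerator, growing denominator), and the identity drops out.

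For part (ii), I expand the numerator and denominator separately using the Perron asymptotic. The numerator is a finite geometric sum,
\[
\sum_{l=0}^{q-1}K^{r+l(m+1)+1}(s_i,s_j)\sim c(i,j)\lambda^{r+1}\cdot\frac{\lambda^{q(m+1)}-1}{\lambda^{m+1}-1},
\]
while the denominator is $\sim C_i\lambda^{q(m+1)+r}$ by the backbone asymptotic. Taking the ratio and letting $q\to\infty$ yields the limit $\gamma/(\lambda^{m+1}-1)$ with
\[
\gamma=\frac{c(i,j)(\lambda-1)}{\sum_{j'=1}^k c(i,j')}>0,
\]
which depends on $i,j$ but is independent of $r$ and $m$, as required. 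Finally, part (iii) needs no new estimates: divide the identity of Lemma~\ref{dec}(ii) by $|\bar{\Delta}^{(s_i)}_{q(m+1)+r}|$ and let $q\to\infty$. The term $|\bar{\Delta}^{(s_i)}_r|/|\bar{\Delta}^{(s_i)}_{q(m+1)+r}|$ collapses to $0$, and each of the $k$ summands on the right has a finite limit by part (ii); these limits must therefore sum to $1$.

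The main obstacle I anticipate is the uniform control of error terms in part (ii): to pass from the termwise asymptotic $K^n(s_i,s_j)\sim c(i,j)\lambda^n$ to asymptotics for both the partial geometric sum in the numerator and the double sum defining the denominator, one must track the $o(1)$ errors carefully. Lemma~\ref{lim} is tailor-made for this Ces\`aro-type upgrade, and invoking it both in establishing the backbone asymptotic and inside the ratio of (ii) is the cleanest way to absorb those errors and arrive at the stated constants.
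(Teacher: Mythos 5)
Your proposal is correct and takes essentially the same route as the paper's proof: Perron--Frobenius asymptotics from Lemma~\ref{eig}, upgraded via the Ces\`aro-type Lemma~\ref{lim} to asymptotics for the ball sizes and the partial geometric sums, then combined with the decomposition identity of Lemma~\ref{dec}(ii) so that the isolated term vanishes and the remaining limits sum to $1$. Your constants $C_i$ and $\gamma$ match the paper's $\frac{\lambda}{\lambda-1}\sum_{l}b_l$ and $\frac{b_j(\lambda-1)}{\sum_{s}b_s}$, so there is nothing to add.
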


\begin{proof}
$\textbf{(i)}$
Let $m \ge 0$, $1\leq i, j\leq k$ be given. For $1\leq l\leq k$, from Lemma \ref{eig} we know there are positive numbers $a_l$ and $b_l$ such that 
\begin{equation*}
	\lim_{n\rightarrow\infty}\frac{K^n(s_{j},s_{l})}{a_l\lambda^n}=1
\end{equation*}
and
\begin{equation*}
	\lim_{n\rightarrow\infty}\frac{K^n(s_{i},s_{l})}{b_l\lambda^n}=1.
\end{equation*}
Thus using Lemma \ref{lim}, we have
\begin{equation*}
	\lim_{n\rightarrow\infty}\frac{\sum_{l=1}^{k}K^n(s_{j},s_{l})}{(\sum_{l=1}^{k}a_l)\lambda^n}=1
\end{equation*}
and
\begin{equation*}
	\lim_{n\rightarrow\infty}\frac{\sum_{l=1}^{k} K^{n+m+1}(s_{i},s_{l})}{(\sum_{l=1}^{k}b_l)\lambda^{n+m+1}}=1.
\end{equation*}
Since $K$ is a primitive $\{0, 1\}$-matrix, Lemma \ref{eig} ensures that the largest eigenvalue of $K$ is greater than $1$. Therefore we may apply Lemma \ref{lim} and obtain
\begin{equation}\label{001}
	\lim_{n\rightarrow\infty}\frac{|\bar{\Delta}^{(s_j)}_n|}{\sum_{s=1}^n\sum_{l=1}^ka_l\lambda^s}=
	\lim_{n\rightarrow\infty}\frac{1+\sum_{s=1}^n\sum_{l=1}^k  K^s(s_{j},s_{l})}{\sum_{s=1}^n\sum_{l=1}^k a_l\lambda^s}=1
\end{equation}
and
\begin{equation}\label{002}
	\lim_{n\rightarrow\infty}\frac{|\bar{\Delta}^{(s_i)}_{n+m+1}|}{\sum_{s=1}^{n+m+1}\sum_{l=1}^kb_l\lambda^s}=
	\lim_{n\rightarrow\infty}\frac{1+\sum_{s=1}^{n+m+1}\sum_{l=1}^k K^s(s_{i},s_{l})}{\sum_{s=1}^{n+m+1}\sum_{l=1}^kb_l\lambda^s}=1.
\end{equation}
We also consider
\begin{equation}\label{003}
	\lim_{n\rightarrow\infty}\frac{\sum_{s=1}^n\sum_{l=1}^ka_l\lambda^s}{\sum_{s=1}^{n+m+1}\sum_{l=1}^kb_l\lambda^s}=
	\lim_{n\rightarrow\infty}\frac{\sum_{l=1}^ka_l}{\sum_{l=1}^kb_l}\frac{\frac{\lambda(\lambda^n-1)}{\lambda-1}}{\frac{\lambda(\lambda^{n+m+1}-1)}{\lambda-1}}=
	\frac{\sum_{l=1}^ka_l}{\lambda^{m+1}\sum_{l=1}^kb_l}.
\end{equation}
The existence of the limit of $\{|\bar{\Delta}^{(s_j)}_n|/|\bar{\Delta}^{(s_i)}_{n+m+1}|\}_{n=1}^\infty$ follows from (\ref{001}), (\ref{002}) and (\ref{003}). We also have
\begin{align*}
	\lim_{n\rightarrow\infty}\frac{|\bar{\Delta}^{(s_j)}_n|}{|\bar{\Delta}^{(s_i)}_{n+m+1}|}&=
	\lim_{n\rightarrow\infty}\frac{|\bar{\Delta}^{(s_j)}_n|}{\sum_{s=1}^n\sum_{l=1}^ka_l\lambda^s}
	\lim_{n\rightarrow\infty}\frac{\sum_{s=1}^{n+m+1}\sum_{l=1}^kb_l\lambda^s}{|\bar{\Delta}^{(s_i)}_{n+m+1}|}
	\lim_{n\rightarrow\infty}\frac{\sum_{s=1}^n\sum_{l=1}^ka_l\lambda^s}{\sum_{s=1}^{n+m+1}\sum_{l=1}^kb_l\lambda^s}\\
	&=\frac{\sum_{l=1}^ka_l}{\lambda^{m+1}\sum_{l=1}^kb_l}>0.
\end{align*}
From Lemma \ref{dec} (ii) we see that
\[
	|\bar{\Delta}^{(s_i)}_{n+m+1}|=|\bar{\Delta}^{(s_i)}_{m}|+\sum_{l=1}^k K^{m+1}(s_{i},s_{l})|\bar{\Delta}^{(s_l)}_{n}|.
\] 
Hence it yields
\begin{align*}
	\sum_{l=1}^k\lim_{n\rightarrow\infty}\frac{K^{m+1}(s_{i},s_{l})|\bar{\Delta}^{(s_l)}_{n}|}{|\bar{\Delta}^{(s_i)}_{n+m+1}|}&=
	\lim_{n\rightarrow\infty}\frac{\sum_{l=1}^k K^{m+1}(s_{i},s_{l})|\bar{\Delta}^{(s_l)}_{n}|}{|\bar{\Delta}^{(s_i)}_{n+m+1}|}\\&=
	\lim_{n\rightarrow\infty}\frac{|\bar{\Delta}^{(s_i)}_{n+m+1}|-|\bar{\Delta}^{(s_i)}_{m}|}{|\bar{\Delta}^{(s_i)}_{n+m+1}|}\\&=1.
\end{align*}

$\textbf{(ii)}$ Let $r, m\geq0$ and $1\leq i, j\leq k$ be given. Let $b_1$,\dots,$b_k$ be as in the proof of Lemma \ref{lim2} (i). Then
\[
	\lim_{n\rightarrow\infty}\frac{K^n(s_{i},s_{j})}{b_j\lambda^n}=1\hbox{ for }j=1,\dots,k
\]
and thus
\[
	\lim_{n\rightarrow\infty}\frac{\sum_{l=1}^{k} K^n(s_{i},s_{l})}{\sum_{l=1}^kb_l\lambda^n}=1
\]
by Lemma \ref{lim}. Consequently, using Lemma \ref{lim} we obtain
\[
	\lim_{q\rightarrow\infty}\frac{\sum_{l=0}^{q-1} K^{r+l(m+1)+1}(s_{i},s_{j})}{b_j\sum_{l=0}^{q-1}\lambda^{r+l(m+1)+1}}=1
\]
and
\[
	\lim_{q\rightarrow\infty}\frac{|\bar{\Delta}^{(s_i)}_{q(m+1)+r}|}{\sum_{s=1}^k b_s\sum_{l=1}^{q(m+1)+r}\lambda^l}=
	\lim_{q\rightarrow\infty}\frac{1+\sum_{s=1}^{q(m+1)+r}\sum_{l=1}^{k} K^s(s_{i},s_{l})}{\sum_{s=1}^k b_s\sum_{l=1}^{q(m+1)+r}\lambda^l}=1.
\]
Since
\begin{align*}
	\lim_{q\rightarrow\infty}\frac{\sum_{l=0}^{q-1}(K^{r+l(m+1)+1})(s_{i},s_{j})}{|\bar{\Delta}^{(s_i)}_{q(m+1)+r}|}&=
	\lim_{q\rightarrow\infty}\frac{b_j}{\sum_{s=1}^kb_s}\frac{\sum_{l=0}^{q-1}\lambda^{r+l(m+1)+1}}{\sum_{l=1}^{q(m+1)+r}\lambda^l}
	\\&=
	\lim_{q\rightarrow\infty}\frac{b_j}{\sum_{s=1}^kb_s}\frac{\lambda^{r+1}(\lambda^{q(m+1)}-1)}{\lambda^{m+1}-1}\frac{\lambda-1}{\lambda(\lambda^{q(m+1)+r}-1)}\\&=
	\frac{b_j}{\sum_{s=1}^kb_s}\frac{\lambda-1}{\lambda^{m+1}-1} > 0
\end{align*}
and the number $\frac{b_j(\lambda-1)}{\sum_{s=1}^kb_s}$ does not depend on the choice of $r$, the proof is complete.

$\textbf{(iii)}$ Let $r, m\geq0$ and $1\leq i, j\leq k$ be given. Using Lemma \ref{dec} (ii) we see that
\[
	|\bar{\Delta}^{(s_i)}_{r+q(m+1)}|=
	|\bar{\Delta}^{(s_i)}_{r}|+\sum_{j=1}^k\sum_{l=0}^{q-1} K^{r+l(m+1)+1}(s_{i},s_{j})|\bar{\Delta}^{(s_j)}_{m}|.
\]
Therefore
\begin{align*}
	\sum_{j=1}^k\lim_{q\rightarrow\infty}\frac{\sum_{l=0}^{q-1} K^{r+1+l(m+1)}(s_{i},s_{j})|\bar{\Delta}^{(s_j)}_{m}|}{|\bar{\Delta}^{(s_i)}_{q(m+1)+r}|}&=
	\lim_{q\rightarrow\infty}\frac{\sum_{j=1}^k\sum_{l=0}^{q-1} K^{r+1+l(m+1)}(s_{i},s_{j})|\bar{\Delta}^{(s_j)}_{m}|}{|\bar{\Delta}^{(s_i)}_{q(m+1)+r}|}\\&=
	\lim_{q\rightarrow\infty}\frac{|\bar{\Delta}^{(s_i)}_{r+q(m+1)}|-|\bar{\Delta}^{(s_i)}_{r}|}{|\bar{\Delta}^{(s_i)}_{q(m+1)+r}|} =1.
\end{align*}
This derives the desired result.
\end{proof}

With the delivery of Lemma \ref{lim2}, we are at the position of presenting the proof of Theorem \ref{same}.

\begin{proof}[Proof of Theorem \ref{same}]
Since $K$ is a primitive matrix, we may choose a positive integer $n$ such that $K^n$ is a positive matrix. We also assume that
\[
	\limsup_{m\rightarrow\infty}\frac{\log{\pni{s_{I'}}{m}}}{|\bar{\Delta}^{(s_{I'})}_{m}|}=\max_{1\leq j\leq k}\limsup_{m\rightarrow\infty}\frac{\log{\pni{s_j}{m}}}{|\bar{\Delta}^{(s_j)}_{m}|}.
\]
From Lemma \ref{dec} (iii) we see that
\[
	\pni{s_{I'}}{n+m+1}\leq 
	\pni{s_{I'}}{n}\prod_{l=1}^k
	(\pni{s_l}{m})^{K^{n+1}(s_{I'},s_{l})},
\]
which leads to
\[
	\frac{\log{\pni{s_{I'}}{n+m+1}}}{|\bar{\Delta}^{(s_{I'})}_{m+n+1}|}\leq
	\frac{\log{\pni{s_{I'}}{n}}}{|\bar{\Delta}^{(s_{I'})}_{m+n+1}|}+
	\sum_{l=1}^k\frac{K^{n+1}(s_{I'},s_{l})|\bar{\Delta}^{(s_l)}_{m}|}{|\bar{\Delta}^{(s_{I'})}_{m+n+1}|}\frac{\log{\pni{s_l}{m}}}{|\bar{\Delta}^{(s_l)}_{m}|}.
\]
Recall that Lemma \ref{lim2} gives
\[
	\lim_{m\rightarrow\infty}\frac{|\bar{\Delta}^{(s_j)}_{m}|}{|\bar{\Delta}^{(s_{I'})}_{m+n+1}|}>0\hbox{ for }j=1,\dots,k
\]
and
\[
	\sum_{l=1}^k\lim_{m\rightarrow\infty}\frac{K^{n+1}(s_{I'},s_{l})|\bar{\Delta}^{(s_l)}_{m}|}{|\bar{\Delta}^{(s_{I'})}_{m+n+1}|}=1.
\] 
Therefore
\begin{align*}
	&\sum_{l=1}^k\lim_{m\rightarrow\infty}\frac{K^{n+1}(s_{I'},s_{l})|\bar{\Delta}^{(s_l)}_{m}|}{|\bar{\Delta}^{(s_{I'})}_{m+n+1}|}\limsup_{m\rightarrow\infty}\frac{\log{\pni{s_{I'}}{m}}}{|\bar{\Delta}^{(s_{I'})}_{m}|}\\=&
	\limsup_{m\rightarrow\infty}\frac{\log{\pni{s_{I'}}{m}}}{|\bar{\Delta}^{(s_{I'})}_{m}|}\\=&
	\limsup_{m\rightarrow\infty}\frac{\log{\pni{s_{I'}}{m+n+1}}}{|\bar{\Delta}^{(s_{I'})}_{m+n+1}|}\\\leq&
	\limsup_{m\rightarrow\infty}\frac{\log{\pni{s_{I'}}{n}}}{|\bar{\Delta}^{(s_{I'})}_{m+n+1}|}+
	\sum_{l=1}^k\limsup_{m\rightarrow\infty}\frac{K^{n+1}(s_{I'},s_{l})|\bar{\Delta}^{(s_l)}_{m}|}{|\bar{\Delta}^{(s_{I'})}_{m+n+1}|}\frac{\log{\pni{s_l}{m}}}{|\bar{\Delta}^{(s_l)}_{m}|}\\\leq&
	\sum_{l=1}^k\lim_{m\rightarrow\infty}\frac{K^{n+1}(s_{I'},s_{l})|\bar{\Delta}^{(s_l)}_{m}|}{|\bar{\Delta}^{(s_{I'})}_{m+n+1}|}\limsup_{m\rightarrow\infty}\frac{\log{\pni{s_l}{m}}}{|\bar{\Delta}^{(s_l)}_{m}|}
\end{align*}
and thus
\[
	\limsup_{m\rightarrow\infty}\frac{\log{\pni{s_{I'}}{m}}}{|\bar{\Delta}^{(s_{I'})}_{m}|}=\limsup_{m\rightarrow\infty}\frac{\log{\pni{s_l}{m}}}{|\bar{\Delta}^{(s_l)}_{m}|}\hbox{ for }l=1,\dots, k.
\]
This completes the proof.
\end{proof}

Besides the demonstration of the existence of the stem entropy, the following theorem deduces that the limit in the definition of the stem entropy does exist once $h^{(s_i)} = h^{(s_j)}$ for $1 \leq i, j \leq k$.

\begin{theorem} \label{exists_inf}
Suppose that $G = \langle S_k | K \rangle$ is finitely generated, and $X \subseteq \mathcal{A}^G$ is a tree shift. If $K$ is primitive, then the limit of the $i$th-stem entropy of $X$ \eqref{eq:ith-stem-entropy} exists, and 
\[
	\lim_{n\rightarrow\infty}\frac{\log \pni{s_i}{n}}{|\bar{\Delta}^{(s_i)}_n|}= \inf_{n\geq0}\max_{1\leq j\leq k}\frac{\log \pni{s_j}{n}}{|\bar{\Delta}^{(s_j)}_n|} \quad \text{for} \quad 1 \leq i \leq k. 
	\tag{A2} \label{eq:exists_inf}
\]
\end{theorem}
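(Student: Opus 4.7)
Setup: write $E_i(N) := \log \pni{s_i}{N} / |\bar{\Delta}^{(s_i)}_N|$, $M(N) := \max_{1 \leq l \leq k} E_l(N)$, and $L := \inf_{n \geq 0} M(n)$. The plan is to show $\lim_N E_i(N) = L$ by separately establishing $\limsup_N E_i(N) \leq L$ (from the upper bound of Lemma \ref{dec}(iii)) and $\liminf_N E_i(N) \geq L$ (by contradiction, crucially using primitivity of $K$).

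For the upper bound, I would start from Lemma \ref{dec}(iii), take logarithms, and divide by $|\bar{\Delta}^{(s_i)}_{n+q(m+1)}|$. Sending $q \to \infty$ with $n$ and $m$ fixed, Lemma \ref{lim2}(iii) makes the weights in front of the $E_l(m)$ terms nonnegative and summing to $1$ in the limit, while the $E_i(n)$ contribution vanishes; this yields $\limsup_{q\to\infty} E_i(n+q(m+1)) \leq \max_l E_l(m)$. Since every large $N$ sits in one of the finitely many residue classes $n + q(m+1)$ with $0 \leq n \leq m$, one gets $\limsup_N E_i(N) \leq \max_l E_l(m)$, and infimizing over $m$ delivers $\limsup_N E_i(N) \leq L$. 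Conversely, since $L \leq \max_l E_l(m)$ for every $m$, $L \leq \limsup_m \max_l E_l(m) = \max_l h^{(s_l)} = h^{(s)}$ by Theorem \ref{same}. Combined, $\limsup_N E_i(N) = L = h^{(s)}$. As a by-product, $M(N) \geq L$ for every $N$ while $\limsup_N M(N) = L$, so $\lim_N M(N) = L$.

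For the lower bound, suppose for contradiction that $\liminf_N E_i(N) = L' < L$. Since each $E_l$ is bounded in $[0, \log|\alphabet|]$, pass to a subsequence $\{N_k\}$ along which $E_l(N_k) \to L'_l$ for every $l$; then $L'_i = L'$ and every $L'_l \leq L$ by the limsup already proved. Choose $n$ large enough that $K^{n+1}$ has all entries positive (possible because $K$ is primitive) and apply Lemma \ref{dec}(iii) with $q=1$ and $m = N_k$ to each $j$:
\begin{equation*}
E_j(n+N_k+1) \leq \frac{|\bar{\Delta}^{(s_j)}_n|}{|\bar{\Delta}^{(s_j)}_{n+N_k+1}|}\, E_j(n) + \sum_{l=1}^k \frac{K^{n+1}(s_j,s_l)\, |\bar{\Delta}^{(s_l)}_{N_k}|}{|\bar{\Delta}^{(s_j)}_{n+N_k+1}|}\, E_l(N_k).
\end{equation*}
Lemma \ref{lim2}(i) sends the leading coefficient to $0$ and the remaining coefficients to strictly positive limits $\tilde{\alpha}_l^{(j)}$ summing to $1$, so in the limit
\begin{equation*}
\limsup_{k \to \infty} E_j(n+N_k+1) \leq \sum_l \tilde{\alpha}_l^{(j)} L'_l \leq \tilde{\alpha}_i^{(j)} L' + \bigl(1 - \tilde{\alpha}_i^{(j)}\bigr) L = L - \tilde{\alpha}_i^{(j)}\bigl(L - L'\bigr).
\end{equation*}
Maximizing over $j$ gives $\limsup_k M(n + N_k + 1) \leq L - \bigl(\min_j \tilde{\alpha}_i^{(j)}\bigr)(L - L') < L$, which contradicts $\lim_N M(N) = L$. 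Hence $\liminf_N E_i(N) \geq L$, and the limit exists and equals $L$.

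The principal obstacle is precisely this lower bound: Lemma \ref{dec}(iii) is intrinsically an upper bound on $\pni{s_i}{\cdot}$, so no direct lower bound on $E_i(N)$ is available. The contradiction argument bypasses the issue by transmitting a hypothetical shortfall $L - L' > 0$ at the single index $i$ into a uniform shortfall across every index $j$; primitivity is exactly what guarantees that each weight $\tilde{\alpha}_i^{(j)}$ is strictly positive, and this is what drives $M(n + N_k + 1)$ below $L$ and conflicts with the convergence $M(N) \to L$ established in the upper-bound phase.
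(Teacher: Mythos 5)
Your proof is correct, and it runs on the same engine as the paper's --- the decomposition of Lemma \ref{dec}(iii) combined with the convex-combination limits of Lemma \ref{lim2}, with primitivity guaranteeing a strictly positive asymptotic weight on every index --- but the execution of the crucial half, $\liminf_N E_i(N) \geq L$, is genuinely different. The paper fixes a block depth $m$ at which $E_i(m)$ nearly realizes $\liminf_n E_i(n)$ while each $E_l(m)$, $l \neq i$, is near its limsup, sends $q \to \infty$ in the ``many small blocks'' regime of the decomposition, and (after invoking Theorem \ref{same} to equate all the limsups) arrives at the self-referential inequality $\limsup_n E_i(n) \leq L^{(i)} \liminf_n E_i(n) + (1 - L^{(i)}) \limsup_n E_i(n)$, which forces $\limsup \leq \liminf$ because $L^{(i)} > 0$. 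You instead first pin down $\limsup_N E_i(N) = L = h^{(s)}$ and hence $M(N) \to L$, and then run the decomposition in the opposite parameter regime --- a stem of fixed depth $n$ with $K^{n+1}>0$ attached to large branches of depth $N_k$ along a subsequence realizing the liminf --- so that a hypothetical shortfall at the single index $i$ contaminates every $E_j(n+N_k+1)$ and drags $M$ strictly below $L$, a contradiction. Your organization buys a cleaner separation of concerns (the limit value $L$ is identified once, up front, and the contradiction step needs only $q=1$) and it sidesteps a subtlety the paper leaves implicit, namely that its weights $L^{(l)}$ depend on the $\epsilon$-dependent choice of $m$ and so must be known to stay bounded away from zero; the price is that you must invoke Theorem \ref{same} earlier and carry the auxiliary convergence $M(N)\to L$. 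The derivation of the $\inf$--$\max$ formula itself is the same in both arguments.
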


\begin{proof}
Let $1\leq i\leq k$ and $\epsilon>0$ be given. We choose an integer $m>0$ such that
\[
	\frac{\log \pni{s_i}{m}}{|\bar{\Delta}^{(s_i)}_{m}|}<\liminf_{n\rightarrow\infty}\frac{\log{\pni{s_i}{n}}}{|\bar{\Delta}^{(s_i)}_{n}|}+\epsilon
\]
and
\[
	\frac{\log \pni{s_l}{m}}{|\bar{\Delta}^{(s_l)}_{m}|}<\limsup_{n\rightarrow\infty}\frac{\log{\pni{s_l}{n}}}{|\bar{\Delta}^{(s_l)}_{n}|}+\epsilon, \hbox{ for }l\neq i.
\]
For $r\geq0$, $q\geq1$, Lemma \ref{dec} (iii) gives
\[
	\pni{s_i}{r+q(m+1)}\leq 
	\pni{s_i}{r}\prod_{l=1}^k
	(\pni{s_l}{m})^{\sum_{j=0}^{q-1}(K^{r+j(m+1)+1})(s_{i},s_{l})},
\]
which yields
\begin{equation}\label{004}
	\frac{\log \pni{s_i}{q(m+1)+r}}{|\bar{\Delta}^{(s_i)}_{q(m+1)+r}|}\le\frac{\log \pni{s_i}{r}}{|\bar{\Delta}^{(s_i)}_{q(m+1)+r}|}+
	\sum_{l=1}^{k}\sum_{j=0}^{q-1}\frac{K^{r+j(m+1)+1}(s_{i},s_{l})|\bar{\Delta}^{(s_l)}_{m}|}{|\bar{\Delta}^{(s_i)}_{q(m+1)+r}|}\frac{\log \pni{s_l}{m}}{|\bar{\Delta}^{(s_l)}_{m}|}.
\end{equation}
For $l=1,\dots,k$, let $L^{(l)}$ denote the limit of $\{\sum_{j=0}^{q-1} K^{r+j(m+1)+1}(s_{i},s_{l})|\bar{\Delta}^{(s_l)}_{m}|/|\bar{\Delta}^{(s_i)}_{q(m+1)+r}|\}_{q=1}^{\infty}$. From Lemma \ref{lim2} we know that each $L^{(l)}$ is positive and the value of $L^{(1)}+\dots+L^{(k)}$ is $1$. Taking limit superior at both sides of (\ref{004}) we thus obtain
\begin{align*}
	\limsup_{q\rightarrow\infty}\frac{\log \pni{s_i}{q(m+1)+r}}{|\bar{\Delta}^{(s_i)}_{q(m+1)+r}|}&\leq
	\sum_{l=1}^kL^{(l)}\frac{\log \pni{s_l}{m}}{|\bar{\Delta}^{(s_l)}_{m}|}\\
	&<L^{(i)}\big(\liminf_{n\rightarrow\infty}\frac{\log{\pni{s_i}{n}}}{|\bar{\Delta}^{(s_i)}_{n}|}+\epsilon\big)+\sum_{l\neq i}L^{(l)}\big(\limsup_{n\rightarrow\infty}\frac{\log{\pni{s_l}{n}}}{|\bar{\Delta}^{(s_l)}_{n}|}+\epsilon\big)\\
	&=L^{(i)}\big(\liminf_{n\rightarrow\infty}\frac{\log{\pni{s_i}{n}}}{|\bar{\Delta}^{(s_i)}_{n}|}+\epsilon\big)+\sum_{l\neq i}L^{(l)}\big(\limsup_{n\rightarrow\infty}\frac{\log{\pni{s_i}{n}}}{|\bar{\Delta}^{(s_i)}_{n}|}+\epsilon\big).
\end{align*}
Therefore
\begin{align*}
	\limsup_{n\rightarrow\infty}\frac{\log \pni{s_i}{n}}{|\bar{\Delta}^{(s_i)}_{n}|}&=\max_{0\leq r\leq m}\limsup_{q\rightarrow\infty}\frac{\log \pni{s_i}{q(m+1)+r}}{|\bar{\Delta}^{(s_i)}_{q(m+1)+r}|}\\
	&<L^{(i)}\big(\liminf_{n\rightarrow\infty}\frac{\log{\pni{s_i}{n}}}{|\bar{\Delta}^{(s_i)}_{n}|}+\epsilon\big)+\sum_{l\neq i}L^{(l)}\big(\limsup_{n\rightarrow\infty}\frac{\log{\pni{s_i}{n}}}{|\bar{\Delta}^{(s_i)}_{n}|}+\epsilon\big).
\end{align*}
Since $\epsilon$ is arbitrary, the inequality above leads to
\begin{align*}
	\sum_{l=1}^k L^{(l)}\limsup_{n\rightarrow\infty}\frac{\log \pni{s_i}{n}}{|\bar{\Delta}^{(s_i)}_{n}|}
	&=\limsup_{n\rightarrow\infty}\frac{\log \pni{s_i}{n}}{|\bar{\Delta}^{(s_i)}_{n}|}\\
	&\leq L^{(i)}\liminf_{n\rightarrow\infty}\frac{\log{\pni{s_i}{n}}}{|\bar{\Delta}^{(s_i)}_{n}|}+\sum_{l\neq i}L^{(l)}\limsup_{n\rightarrow\infty}\frac{\log{\pni{s_i}{n}}}{|\bar{\Delta}^{(s_i)}_{n}|}, 
\end{align*}
which also gives
\[
	\limsup_{n\rightarrow\infty}\frac{\log \pni{s_i}{n}}{|\bar{\Delta}^{(s_i)}_{n}|}=\liminf_{n\rightarrow\infty}\frac{\log{\pni{s_i}{n}}}{|\bar{\Delta}^{(s_i)}_{n}|}.
\]

It remains to show that the stem entropies equal the infimum. Note that for $1\leq l\leq k$ the value of $L^{(l)}$ does not depend on the choice of $r$. Observe that (\ref{004}) holds for all $m\geq0$. Taking $r=0$ into (\ref{004}) and letting $q$ tends to infinity, we obtain
\begin{align*}
	\lim_{n\rightarrow\infty}\frac{\log \pni{s_i}{n}}{|\bar{\Delta}^{(s_i)}_{n}|}&=\lim_{q\rightarrow\infty}\frac{\log \pni{s_i}{q(m+1)+r}}{|\bar{\Delta}^{(s_i)}_{q(m+1)+r}|}\\
	&\leq\sum_{l=1}^kL^{(l)}\frac{\log \pni{s_l}{m}}{|\bar{\Delta}^{(s_l)}_{m}|}\\
	&\leq\sum_{l=1}^kL^{(l)}\max_{1\leq j\leq k}\frac{\log \pni{s_j}{m}}{|\bar{\Delta}^{(s_j)}_{m}|}\\
	&=\max_{1\leq j\leq k}\frac{\log \pni{s_j}{m}}{|\bar{\Delta}^{(s_j)}_{m}|}.
\end{align*}
Hence
\[
	\lim_{n\rightarrow\infty}\frac{\log \pni{s_i}{n}}{|\bar{\Delta}^{(s_i)}(n)|}= \inf_{m\geq0}\max_{1\leq j\leq k}\frac{\log \pni{s_j}{m}}{|\bar{\Delta}^{(s_j)}(m)|}.
\]
The proof is complete.
\end{proof}
{\color{cyan}
\begin{proposition} \label{irreducible_generalization}
    The assumption of the matrix $K$ could be loosen so that it is irreducible while \eqref{eq:same} and \eqref{eq:exists_inf} are still valid.
\end{proposition}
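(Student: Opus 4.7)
The plan is to adapt the proofs of Theorems \ref{same} and \ref{exists_inf} to the case where $K$ is only assumed irreducible. The obstruction is that $K^n$ is no longer eventually positive: by the Perron--Frobenius theorem for irreducible nonnegative matrices, if $K$ has period $p \geq 1$, the generators split into cyclic classes $C_0, \ldots, C_{p-1}$, and $K^n(s_i, s_j) > 0$ holds precisely when $n$ lies in a single residue class modulo $p$ determined by the classes containing $s_i$ and $s_j$. Along that residue class, however, $K^n(s_i, s_j)/\lambda^n$ still converges to a strictly positive constant, where $\lambda > 1$ is the Perron root, because $K^p$ restricts to a primitive matrix on each cyclic class.

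First, I would refine Lemma \ref{eig}: for each $n_0 \in \{0, 1, \ldots, p-1\}$, the limit $\lim_{n \to \infty,\, n \equiv n_0\,(\mathrm{mod}\,p)} K^n(s_i, s_j)/\lambda^n = c_{n_0}(i,j)$ exists and is positive exactly when $n_0$ matches the cyclic displacement from $s_i$ to $s_j$. Summing over $j$ and $l \leq n$ yields $|\bar{\Delta}^{(s_i)}_n| \sim \lambda^n\,\beta_i(n \bmod p)$ for a strictly positive $p$-periodic function $\beta_i$. Crucially, the algebraic identity from Lemma \ref{dec}(ii),
\[
\sum_{l=1}^k \frac{K^{n+1}(s_i, s_l)\,|\bar{\Delta}^{(s_l)}_m|}{|\bar{\Delta}^{(s_i)}_{n+m+1}|} = 1 - \frac{|\bar{\Delta}^{(s_i)}_n|}{|\bar{\Delta}^{(s_i)}_{n+m+1}|},
\]
still tends to $1$ as $m \to \infty$; along any subsequence $m \equiv m_0 \pmod p$ the individual summands admit positive limits supported on the cyclic class at distance $n+1$ from $s_i$, and together form a genuine convex decomposition. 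This replaces Lemma \ref{lim2}.

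Next I would re-execute the proof of Theorem \ref{same}. Fix $I'$ maximizing $h^{(s_{\cdot})}$; the inequality from Lemma \ref{dec}(iii) holds without any assumption on $K$. Take $\limsup$ along $m \to \infty$ restricted to a residue class $m_0 \bmod p$ that realizes $h^{(s_{I'})}$: the coefficients converge to a probability vector supported on the cyclic class containing $s_l$'s at cyclic-distance $n+1$ from $s_{I'}$, and the maximality argument from the original proof then forces $h^{(s_l)} = h^{(s_{I'})}$ for every $l$ in that class. To sweep across all cyclic classes, vary $n+1$ through $p$ consecutive residues, each chosen large enough so that $K^{n+1}(s_{I'}, \cdot)$ is strictly positive on its target class; the $p$ choices together exhaust $C_0 \cup \cdots \cup C_{p-1}$ and yield \eqref{eq:same}. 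Equation \eqref{eq:exists_inf} follows from the same subsequential adjustment applied verbatim to the argument of Theorem \ref{exists_inf}, since Lemma \ref{dec}(iii) and the identity above are the only primitive-dependent ingredients there.

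The main obstacle is the periodic bookkeeping in the first two steps: one has to justify that passing $\limsup$ through a convex combination whose weights converge only subsequentially still gives the bound used in the primitive case, and that the subsequential weights sum to $1$. Conceptually this reduces to applying the primitive result to each diagonal block of $K^p$ on its cyclic class, then gluing the pieces together by varying the step size; the $p$-periodic oscillation of $|\bar{\Delta}^{(s_i)}_n|/\lambda^n$ is absorbed once one commits to working along residues modulo $p$.
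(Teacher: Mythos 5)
Your overall strategy --- split the generators into the cyclic classes of the irreducible matrix $K$ and reduce to the primitive case via $K^P$ restricted to each class --- is the same one the paper uses, and your treatment of \eqref{eq:same} is essentially sound: the paper likewise chooses $n$ with $K^{n+1}(s_i,s_j)>0$ for a suitable pair of generators and extracts a nondegenerate convex combination, and sweeping $n+1$ through $P$ consecutive residues achieves the same coverage of all classes. There are, however, two genuine gaps.

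First, your assertion that the Perron root satisfies $\lambda>1$ is false for irreducible binary $K$: take $K=\left(\begin{smallmatrix}0&1\\1&0\end{smallmatrix}\right)$, i.e.\ the infinite dihedral group $\mathbb{Z}_2\ast\mathbb{Z}_2$, which is irreducible of period $2$ with $\rho(K)=1$. Then $|\bar{\Delta}^{(s_i)}_n|=n+1$, so the asymptotic $|\bar{\Delta}^{(s_i)}_n|\sim\lambda^n\beta_i(n\bmod P)$ that your bookkeeping rests on fails, as do all the geometric-series computations inherited from Lemma \ref{lim2}. The paper isolates $\rho(K)=1$ as a separate case (with an explicit constant $C=1/3$ in place of the spectral estimate); your argument must do the same or work around it.

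Second, and more seriously, \eqref{eq:exists_inf} does not follow ``verbatim'' by a subsequential adjustment. The coefficient $L^{(i)}$ multiplying the $\liminf$ term in the proof of Theorem \ref{exists_inf} is the asymptotic weight of the subtrees rooted at copies of $s_i$ in the depth-$(m+1)$ decomposition, i.e.\ it is governed by the entries $K^{r+j(m+1)+1}(s_i,s_i)$. In the irreducible case such an entry is nonzero only when $P\mid r+j(m+1)+1$, and for a general approximating depth $m$ (chosen to witness the $\liminf$) and a general residue $r$ this congruence may have \emph{no} solutions $j$ at all (namely when $\gcd(m+1,P)\nmid r+1$); then $L^{(i)}=0$ and the convex-combination argument yields nothing. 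This is precisely why the paper's proof interleaves blocks of two different depths $m_0$ and $n_0$, with $n_0$ chosen so that $P\mid m_0+n_0+2$, making the combined step $P_0$ compatible with the cyclic structure before the decomposition of Lemma \ref{dec}(iii) is applied. That interleaving is a genuinely new ingredient relative to the primitive case, and your proposal does not supply it.
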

\begin{proof}
    Let $K$ be irreducible with period $P$. According to the cyclic structure of $K$ discussed in \cite[Section 4.5]{LM-1995}, with a proper permutation in index, $K$ has the following form:
    \begin{equation} \label{cyclic_decomposition}
        \begin{bmatrix}
            O & K_1 & O & \cdots & O & O \\
            O & O & K_2 & \cdots & O & O \\
            \vdots & \vdots & \vdots & \ddots & \vdots & \vdots \\
            O & O & O & \cdots & O & K_{P-1}\\
            K_P & O & O & \cdots & O & O
        \end{bmatrix}.
    \end{equation}
    Furthermore, by recursively defining $K_{n}=K_{n-P}$ for every $n > P$, the matrix $\mathcal{K}_{r}:=K_{r} K_{r+1} \ldots K_{r+P-1}$ is a primitive matrix and the spectral radius $\rho(\mathcal{K}_{r})=\rho(K)^P$. 
    
    The consequence of the above property yields an estimate of the number $L^{(s_i)}_n$ of points in $n$-th level of $\bar{\Delta}^{(s_i)}_{n}$. Suppose $s_i$ corresponds to the row index $I$ in the matrix $K_r$, which has $k_r$ rows in total. Let $\mathbf{e}_I$ be the $k_r$-dimensional column vector with all entries 0 except for the entry index by $I$ being 1
    \begin{align*}
        L^{(s_i)}_n & =\sum_{j=1}^{k} K^n(s_i,s_j) \\
        & = \mathbf{e}_I^T K_r K_{r+1} \ldots K_{r+n-1} \mathbf{1} \\
        & = \mathbf{e}_I^T \mathcal{K}_r^{\lfloor \frac{n}{P} \rfloor} K_{r + P \lfloor \frac{n}{P} \rfloor} \ldots K_{r+n-1} \mathbf{1}.
    \end{align*}
    Thus,
    \begin{equation} \label{level_num}
        0 < \liminf_{n \to \infty} \frac{L^{(s_i)}_n}{\rho(K)^n} \le \limsup_{n \to \infty} \frac{L^{(s_i)}_n}{\rho(K)^n} < \infty,
    \end{equation}
    and by a similar argument used in Lemma \ref{lim}
    \begin{equation} \label{block_num}
        \begin{aligned}
            \liminf_{n \to \infty} \frac{|\bar{\Delta}^{(s_j)}_{n}|}{|\bar{\Delta}^{(s_i)}_{n+m}|} = \liminf_{n \to \infty} \frac{\sum_{l=0}^{n} L^{(s_j)}_l}{\sum_{l=0}^{n+m} L^{(s_i)}_l} \ge \frac{\liminf_{n \to \infty} \frac{L^{(s_j)}_n}{\rho(K)^n}}{\rho(K)^m \cdot \limsup_{n \to \infty} \frac{L^{(s_i)}_{n+m}}{\rho(K)^{n+m}}} > 0.
        \end{aligned}
    \end{equation}
    
    To prove \eqref{eq:same}, let $n$ be a positive integer such that $K^{n+1}(s_i,s_j)>0$ and 
    \[
	\limsup_{m\rightarrow\infty}\frac{\log{\pni{s_{i}}{m}}}{|\bar{\Delta}^{(s_{i})}_{m}|}=\max_{1\leq l\leq k}\limsup_{m\rightarrow\infty}\frac{\log{\pni{s_j}{m}}}{|\bar{\Delta}^{(s_l)}_{m}|},
\]
\[
	\limsup_{m\rightarrow\infty}\frac{\log{\pni{s_{j}}{m}}}{|\bar{\Delta}^{(s_{j})}_{m}|}=\min_{1\leq l\leq k}\limsup_{m\rightarrow\infty}\frac{\log{\pni{s_j}{m}}}{|\bar{\Delta}^{(s_l)}_{m}|}.
\]
    By taking limit superior in $m$ from 
    \[
        \frac{\log{\pni{s_{i}}{n+m+1}}}{|\bar{\Delta}^{(s_{i})}_{m+n+1}|}\leq
        \frac{\log{\pni{s_{i}}{n}}}{|\bar{\Delta}^{(s_{i})}_{m+n+1}|}+
        \sum_{l=1}^k\frac{K^{n+1}(s_{i},s_{l})|\bar{\Delta}^{(s_l)}_{m}|}{|\bar{\Delta}^{(s_{i})}_{m+n+1}|}\frac{\log{\pni{s_l}{m}}}{|\bar{\Delta}^{(s_l)}_{m}|},
    \]
    we derive
    \begin{align*}
    	&
    	\limsup_{m\rightarrow\infty}\frac{\log{\pni{s_{i}}{m}}}{|\bar{\Delta}^{(s_{i})}_{m}|}\\=&
    	\liminf_{m\rightarrow\infty} \frac{K^{n+1}(s_{i},s_{j})|\bar{\Delta}^{(s_j)}_{m}|}{|\bar{\Delta}^{(s_{i})}_{m+n+1}|} \limsup_{m\rightarrow\infty}\frac{\log{\pni{s_j}{m}}}{|\bar{\Delta}^{(s_j)}_{m}|}\\ & \hspace{3em} +(1-\liminf_{m\rightarrow\infty} \frac{K^{n+1}(s_{i},s_{j})|\bar{\Delta}^{(s_j)}_{m}|}{|\bar{\Delta}^{(s_{i})}_{m+n+1}|}) \limsup_{m\rightarrow\infty}\frac{\log{\pni{s_i}{m}}}{|\bar{\Delta}^{(s_i)}_{m}|}.
    \end{align*}
    Equation \eqref{eq:same} follows as a consequence of $\liminf_{m\rightarrow\infty} \frac{K^{n+1}(s_{i},s_{j})|\bar{\Delta}^{(s_j)}_{m}|}{|\bar{\Delta}^{(s_{i})}_{m+n+1}|}>0$, which follows from \eqref{cyclic_decomposition} and \eqref{block_num}.
    
    We divide the proof of \eqref{eq:exists_inf} into two parts. That is, $\limsup_{n\rightarrow\infty}\frac{\log \pni{s_i}{n}}{|\bar{\Delta}^{(s_i)}_n|} \le \liminf_{n\rightarrow\infty}\frac{\log \pni{s_i}{n}}{|\bar{\Delta}^{(s_i)}_n|}$ and $\lim_{n\rightarrow\infty}\frac{\log \pni{s_i}{n}}{|\bar{\Delta}^{(s_i)}_n|} \le \max_{1\leq j\leq k}\frac{\log \pni{s_j}{n}}{|\bar{\Delta}^{(s_j)}_n|}$ for every $n \ge 0$. For the first part, let $C$ be a positive constant depending only on $K$ defined as
    \[
    C:=\begin{cases}
        1, & \text{if } \rho(K) = 1; \\
        \min\limits_{1 \le i \le k}\left(\lim\limits_{m\rightarrow\infty} \frac{K^{m P}(s_{i},s_{i})}{\rho(K)^{m P}} \inf\limits_{m \ge 0}\frac{|\bar{\Delta}^{(s_i)}_{m}|}{\rho(K)^{m+P}} \liminf\limits_{m\rightarrow\infty} \frac{\rho(K)^{m}}{|\bar{\Delta}^{(s_i)}_{m}|}\right), & \text{if } \rho(K) > 1,
    \end{cases}
    \]
    and let $1\leq i\leq k$ and $\epsilon>0$ be given. We choose an integer $N \ge P$ and $m_0 \ge N$ such that for every $m \ge N$
\[
	\frac{\log \pni{s_l}{m}}{|\bar{\Delta}^{(s_l)}_{m}|}<\limsup_{n\rightarrow\infty}\frac{\log{\pni{s_l}{n}}}{|\bar{\Delta}^{(s_l)}_{n}|}+\epsilon, \hbox{ for }l\neq i
\]
and that
\[
	\frac{\log \pni{s_i}{m_0}}{|\bar{\Delta}^{(s_i)}_{m_0}|}<\liminf_{n\rightarrow\infty}\frac{\log{\pni{s_i}{n}}}{|\bar{\Delta}^{(s_i)}_{n}|}+\epsilon.
\]
For every $m \ge 0$, define $r_m=\max\{n P \ge 0: n P + m_0 + 1 \le m\}$, $n_0=\min\{n \ge N: P | n+m_0+2\}$, $P_0=m_0+n_0+2$ and $S_m=\{r_m-n P_0: n \in \mathbb{N}\}$. Thus, for all sufficiently large $m \ge N$, 
\begin{align*}
	\frac{\log \pni{s_i}{m}}{|\bar{\Delta}^{(s_i)}_{m}|} \le & \frac{|\bar{\Delta}^{(s_i)}_{\min S_m}|}{|\bar{\Delta}^{(s_i)}_{m}|} \frac{\log \pni{s_i}{\min S_m}}{|\bar{\Delta}^{(s_i)}_{\min S_m}|}+
	\sum_{l=1}^{k} \sum_{n \in S_m \cup \{r_m\}}\frac{K^{n}(s_{i},s_{l})|\bar{\Delta}^{(s_l)}_{m_0}|}{|\bar{\Delta}^{(s_i)}_{m}|}\frac{\log \pni{s_l}{m_0}}{|\bar{\Delta}^{(s_l)}_{m_0}|} \\ & \hspace{3em} +
	\sum_{l=1}^{k} \sum_{n \in S_m}\frac{K^{n+m_0+1}(s_{i},s_{l})|\bar{\Delta}^{(s_l)}_{n_0}|}{|\bar{\Delta}^{(s_i)}_{m}|}\frac{\log \pni{s_l}{n_0}}{|\bar{\Delta}^{(s_l)}_{n_0}|} \\ & \hspace{3em} +
	\sum_{l=1}^{k}\frac{K^{r_m+m_0+1}(s_{i},s_{l})|\bar{\Delta}^{(s_l)}_{m-(r_m+m_0+1)}|}{|\bar{\Delta}^{(s_i)}_{m}|}\frac{\log \pni{s_l}{m-(r_m+m_0+1)}}{|\bar{\Delta}^{(s_l)}_{m-(r_m+m_0+1)}|} \\\le &
	\left(\sum_{l=1}^{k} \sum_{n \in S_m \cup \{r_m\}}\frac{K^{n}(s_{i},s_{l})|\bar{\Delta}^{(s_l)}_{m_0}|}{|\bar{\Delta}^{(s_i)}_{m}|}\right) \left(\liminf_{n\rightarrow\infty}\frac{\log{\pni{s_i}{n}}}{|\bar{\Delta}^{(s_i)}_{n}|}+\epsilon\right) \\ & \hspace{3em} +
	\left(1-\sum_{l=1}^{k} \sum_{n \in S_m \cup \{r_m\}}\frac{K^{n}(s_{i},s_{l})|\bar{\Delta}^{(s_l)}_{m_0}|}{|\bar{\Delta}^{(s_i)}_{m}|}\right) \left(\limsup_{n\rightarrow\infty}\frac{\log{\pni{s_i}{n}}}{|\bar{\Delta}^{(s_i)}_{n}|}+\epsilon\right)
\end{align*}
Since $K$ is of the form \eqref{cyclic_decomposition},
by taking limit superior in $m$ from both sides it yields
\begin{align*}
    \limsup_{m\rightarrow\infty}\frac{\log{\pni{s_i}{m}}}{|\bar{\Delta}^{(s_i)}_{m}|} \le & \left(\liminf_{m\rightarrow\infty} \sum_{n \in S_m \cup \{r_m\}}\frac{K^{n}(s_{i},s_{i})|\bar{\Delta}^{(s_i)}_{m_0}|}{|\bar{\Delta}^{(s_i)}_{m}|}\right) \left(\liminf_{m\rightarrow\infty}\frac{\log{\pni{s_i}{m}}}{|\bar{\Delta}^{(s_i)}_{m}|}+\epsilon\right) \\ & \hspace{3em} +
	\left(1-\liminf_{m\rightarrow\infty} \sum_{n \in S_m \cup \{r_m\}}\frac{K^{n}(s_{i},s_{i})|\bar{\Delta}^{(s_i)}_{m_0}|}{|\bar{\Delta}^{(s_i)}_{m}|}\right) \left(\limsup_{m\rightarrow\infty}\frac{\log{\pni{s_i}{m}}}{|\bar{\Delta}^{(s_i)}_{m}|}+\epsilon\right).
\end{align*}
In fact, we can show that the coefficient of the convex combination has the following estimate of lower bound:
\begin{align*}
    \liminf_{m\rightarrow\infty} \sum_{n \in S_m \cup \{r_m\}}\frac{K^{n}(s_{i},s_{i})|\bar{\Delta}^{(s_i)}_{m_0}|}{|\bar{\Delta}^{(s_i)}_{m}|} & \ge C > 0.
\end{align*}
To show this we consider when $\rho(K)=1$, $|\Delta^{(s_l)}_m|=m+1$ and thus
\begin{align*}
    \liminf_{m\rightarrow\infty} \sum_{n \in S_m \cup \{r_m\}}\frac{K^{n}(s_{i},s_{i})|\bar{\Delta}^{(s_i)}_{m_0}|}{|\bar{\Delta}^{(s_i)}_{m}|} & = \frac{m_0+1}{m_0+1+n_0+1} \ge \frac{m_0+1}{m_0+1 + 2( m_0+1)} =\frac{1}{3} = C.
\end{align*}
For the $\rho(K)>1$, 
\begin{align*}
    & \quad \liminf_{m\rightarrow\infty} \sum_{n \in S_m \cup \{r_m\}}\frac{K^{n}(s_{i},s_{i})|\bar{\Delta}^{(s_i)}_{m_0}|}{|\bar{\Delta}^{(s_i)}_{m}|} \\
    & \ge \liminf_{m\rightarrow\infty} \frac{K^{r_m}(s_{i},s_{i})|\bar{\Delta}^{(s_i)}_{m_0}|}{|\bar{\Delta}^{(s_i)}_{m}|} \\
    & \ge \lim_{m\rightarrow\infty} \frac{K^{r_m}(s_{i},s_{i})}{\rho(K)^{r_m}} \frac{|\bar{\Delta}^{(s_i)}_{m_0}|}{\rho(K)^{m_0+P}} \liminf_{m\rightarrow\infty} \frac{\rho(K)^{r_m+m_0+P}}{|\bar{\Delta}^{(s_i)}_{r_m+m_0+P}|} \ge C.
\end{align*}
Hence, we have
\begin{align*}
    \limsup_{m\rightarrow\infty}\frac{\log{\pni{s_i}{m}}}{|\bar{\Delta}^{(s_i)}_{m}|} \le & C \left(\liminf_{m\rightarrow\infty}\frac{\log{\pni{s_i}{m}}}{|\bar{\Delta}^{(s_i)}_{m}|}+\epsilon\right) \\ & \hspace{3em} +
	\left(1-C\right) \left(\limsup_{m\rightarrow\infty}\frac{\log{\pni{s_i}{m}}}{|\bar{\Delta}^{(s_i)}_{m}|}+\epsilon\right).
\end{align*}
Because $\epsilon > 0$ is arbitrary, it follows that $\limsup_{n\rightarrow\infty}\frac{\log{\pni{s_i}{n}}}{|\bar{\Delta}^{(s_i)}_{n}|} = \liminf_{n\rightarrow\infty}\frac{\log{\pni{s_i}{n}}}{|\bar{\Delta}^{(s_i)}_{n}|}$. As for the second part, the proof remains the same as that in Theorem \ref{exists_inf}.
\end{proof}
}

% -----------------------------------------------------
\section{Existence of Topological Entropy}

Recall that the definitions of the topological and stem entropies collapse whenever $G$ is a strict semigroup. Theorems \ref{same} and \ref{exists_inf} yield a class of finitely generated semigroups on which the stem entropy of each tree shift exists, following the derived results, this section is devoted to the existence of the topological entropy and the relationship between the topological entropy and stem entropy. We demonstrate the existence of the topological entropy for a class of tree shifts on $G$, and the topological entropy is identical to the stem entropy. The considered class of semigroups contains but is not limited to the class of finitely generated free groups. For the rest of this article, $G = \langle S_k | K \rangle$ is a finitely generated semigroup with primitive matrix $K$.

Let $\mathbf{A} = (A_1, A_2, \ldots, A_k)$ be a $k$-tuple of binary matrices indexed by $\mathcal{A}$. Recall that a Markov tree shift $X_{\mathbf{A}} \subseteq \mathcal{A}^G$ is defined as
$$
X_{\mathbf{A}} = \{t \in \mathcal{A}^G: A_i (t_g, t_{g s_i}) = 1 \text{ for all } g \in G, |g s_i| = |g| + 1\}.
$$
The following theorem indicates that the topological entropy of a Markov tree shift exists provided $K$ has a full row. Moreover, the topological entropy is identical to the stem entropy.

\begin{theorem} \label{thm:entropy_comparison}
        Suppose $K \in \{0,1\}^{k \times k}$ satisfies $\sum_{j=1}^k K(s_i,s_j) = k$ for some $s_i \in S_k$, and $X$ is a Markov tree shift. Then the topological entropy of $X$ exists and $$h = \lim_{n \to \infty} \frac{\log \pn{n}}{\norm{\Delta_n}} = h^{(s)}.$$
\end{theorem}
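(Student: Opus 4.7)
The plan is to bracket $\limsup_n \log p_n / |\Delta_n|$ and $\liminf_n \log p_n / |\Delta_n|$ between two copies of the stem entropy $h^{(s)}$, which exists by Theorem \ref{same} with the limit form $\lim_n \log p_n^{(s_i)} / |\bar\Delta_n^{(s_i)}| = h^{(s)}$ guaranteed by Theorem \ref{exists_inf}. The lower bound $h^{(s)} \le \liminf_n \log p_n / |\Delta_n|$ will come from a structural shift argument, and the upper bound $\limsup_n \log p_n / |\Delta_n| \le h^{(s)}$ from a Markov branching decomposition at the root.

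For the lower bound, the full-row hypothesis $K(s_i, \cdot) \equiv 1$ guarantees $|s_i h| = 1 + |h|$ for every $h \in G$, so $h \mapsto s_i h$ is a depth-preserving bijection $\Delta_n \to \bar\Delta_n^{(s_i)}$; this yields $|\Delta_n| = |\bar\Delta_n^{(s_i)}|$ (equivalently, by Lemma \ref{dec}(i) together with $\mathbf{e}_{s_i}^T K = \mathbf{1}^T$). Moreover, for any accepted pattern $u \in C_n^{(s_i)}$ with witness $t' \in X_{\mathbf A}$, the shifted labelling $\sigma_{s_i} t'$ given by $(\sigma_{s_i} t')_h := t'_{s_i h}$ still satisfies all Markov constraints and hence lies in $X_{\mathbf A}$; it witnesses the pattern $v_h := u_{s_i h}$ on $\Delta_n$, so $v \in B_n$. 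The map $u \mapsto v$ is injective, whence $p_n^{(s_i)} \le p_n$, and dividing by $|\Delta_n| = |\bar\Delta_n^{(s_i)}|$ followed by Theorem \ref{exists_inf} yields $h^{(s)} = h^{(s_i)} \le \liminf_n \log p_n / |\Delta_n|$.

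For the upper bound, decompose $\Delta_n = \{1_G\} \sqcup \bigsqcup_{j=1}^k \bar\Delta_{n-1}^{(s_j)}$. Because the sub-Cayley trees $\bar\Delta_\infty^{(s_j)}$ meet only at $1_G$, witnessing trees for individually accepted restrictions on each branch can be spliced together, giving the identity $p_{n;a} = \prod_{j=1}^k \sum_{b : A_j(a,b) = 1} p_{n-1;b}^{(s_j)}$ and hence the bound $p_n \le |\mathcal A| \prod_{j=1}^k p_{n-1}^{(s_j)}$. Dividing by $|\Delta_n|$,
\[
\frac{\log p_n}{|\Delta_n|} \le \frac{\log |\mathcal A|}{|\Delta_n|} + \sum_{j=1}^k \frac{|\bar\Delta_{n-1}^{(s_j)}|}{|\Delta_n|} \cdot \frac{\log p_{n-1}^{(s_j)}}{|\bar\Delta_{n-1}^{(s_j)}|}.
\]
The asymptotic $|\bar\Delta_n^{(s_j)}| \sim C_j \lambda^n$ (from Lemma \ref{eig} applied to primitive $K$) gives $|\bar\Delta_{n-1}^{(s_j)}| / |\Delta_n| \to \alpha_j > 0$ with $\sum_j \alpha_j = 1$, while each inner ratio tends to $h^{(s_j)} = h^{(s)}$ by Theorems \ref{same} and \ref{exists_inf}. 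Taking $\limsup$ yields $\limsup_n \log p_n / |\Delta_n| \le h^{(s)}$.

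Combining the two estimates, $\lim_n \log p_n / |\Delta_n|$ exists and equals $h^{(s)}$. The most delicate point will be the splicing in the branch decomposition, i.e., verifying that individually-accepted restrictions $u^{(j)} \in C_{n-1}^{(s_j)}$ satisfying the root Markov constraint always combine into a single $v \in B_n$; once this is settled, the remaining ingredients are routine applications of Theorems \ref{same}, \ref{exists_inf}, and the asymptotics of $|\bar\Delta_n^{(s_j)}|$ under primitivity of $K$.
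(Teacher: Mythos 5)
Your proposal is correct and follows essentially the same route as the paper: the upper bound via the root decomposition $\Delta_n=\{1_G\}\sqcup\bigsqcup_j\bar{\Delta}^{(s_j)}_{n-1}$ giving $\pn{n}\le\norm{\alphabet}\prod_j\pni{s_j}{n-1}$, and the lower bound via $\pni{s_i}{n}\le\pn{n}$ together with $\norm{\Delta_n}=\norm{\bar{\Delta}^{(s_i)}_n}$ from the full row, both combined with Theorems \ref{same} and \ref{exists_inf}. The only remark worth making is that the splicing step you flag as delicate is not actually needed: the upper bound uses only the injectivity of the restriction map (the inequality $\pna{n}{a}\le\prod_j\pni{s_j}{n-1}$), not the product identity.
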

\begin{proof}
Note that every $n$-block $u \in B_n$ can be uniquely expressed as a $(k+1)$-tuple $(u_{1_G}, u|_{\bar{\Delta}^{(s_1)}_{n-1}}, u|_{\bar{\Delta}^{(s_2)}_{n-1}}, \cdots, u|_{\bar{\Delta}^{(s_k)}_{n-1}})$, and thus $\pn{n} \le \norm{\alphabet} \cdot \prod_{j=1}^k \pni{s_j}{n-1}$. As a consequence,
\begin{equation} \label{eq:top<=stem}
\limsup_{n \to \infty} \frac{\log \pn{n}}{\norm{\Delta_n}} \le \limsup_{n \to \infty} \frac{\norm{\alphabet}}{\norm{\Delta_n}} + \sum_{j=1}^k \frac{\log \pni{s_j}{n-1}}{\norm{\bar{\Delta}^{(s_j)}_{n-1}}} \frac{\norm{\bar{\Delta}^{(s_j)}_{n-1}}}{\norm{\Delta_n}} = h^{(s)}
\end{equation}
holds by applying Theorem \ref{exists_inf}. On the other hand, $\pni{s_i}{n} \le \pn{n}$ holds naturally, which further implies
\begin{equation} \label{eq:top>=stem}
\liminf_{n \to \infty} \frac{\log \pn{n}}{\norm{\Delta_n}} \ge \liminf_{n \to \infty} \frac{\log \pni{s_i}{n}}{\norm{\Delta_n}} = \liminf_{n \to \infty} \frac{\log \pni{s_i}{n}}{\norm{\bar{\Delta}^{(s_i)}_n}} = h^{(s_i)} = h^{(s)}.
\end{equation}
The proof is finished by combining \eqref{eq:top<=stem} and \eqref{eq:top>=stem} above.
\end{proof}

The theorem above asserts the existence of topological entropy of a Markov tree shift on a Fibonacci-Cayley tree, which was revealed in \cite{BCH-JAC2020}.

\begin{corollary}[See \cite{BCH-JAC2020}] \label{cor:golden_mean_entropy} 
Suppose $G$ is generated by $S_2$ with $K=\begin{pmatrix}1 & 1\\ 1 & 0\end{pmatrix}$, and $X$ is a Markov tree shift. Then the topological entropy of $X$ exists and can be calculated via a system of recurrence equations.
\end{corollary}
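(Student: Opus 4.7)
My plan is to observe that the hypothesis of Theorem~\ref{thm:entropy_comparison} is met and then set up the explicit recurrences exploiting the two-generator Fibonacci structure.

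First, I would note that for $K=\begin{pmatrix}1 & 1\\ 1 & 0\end{pmatrix}$ the first row satisfies $K(s_1,s_1)+K(s_1,s_2)=2=k$, so $K$ has a full row; moreover $K^2=\begin{pmatrix}2 & 1\\ 1 & 1\end{pmatrix}$ is positive, so $K$ is primitive and the standing hypothesis of the section applies. Theorem~\ref{thm:entropy_comparison} then gives at once the existence of the topological entropy together with the identity $h=h^{(s)}$, so the remaining task is to compute $h^{(s)}$ by a finite recursion.

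Next, I would translate the decomposition of the semiballs $\bar{\Delta}^{(s_i)}_{n}$ into a system of equations on $\pnai{s_i}{n}{a}$. Because $K(s_1,s_1)=K(s_1,s_2)=1$ while $K(s_2,s_1)=1$ and $K(s_2,s_2)=0$, the subtree rooted at $s_1$ decomposes into one copy of a subtree rooted at $s_1s_1$ plus one copy rooted at $s_1s_2$, while the subtree rooted at $s_2$ decomposes into one copy rooted at $s_2s_1$ only. Using the Markov property with transition matrices $\mathbf{A}=(A_1,A_2)$, this yields, for every $a\in\alphabet$,
\begin{align*}
\pnai{s_1}{n}{a} &= \Big(\sum_{b\in\alphabet} A_1(a,b)\,\pnai{s_1}{n-1}{b}\Big)\Big(\sum_{c\in\alphabet} A_2(a,c)\,\pnai{s_2}{n-1}{c}\Big),\\
\pnai{s_2}{n}{a} &= \sum_{b\in\alphabet} A_1(a,b)\,\pnai{s_1}{n-1}{b},
\end{align*}
with initial condition $\pnai{s_i}{0}{a}=1$ for all $a\in\alphabet$ and $i\in\{1,2\}$. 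Summing over $a$ gives $\pni{s_i}{n}=\sum_{a}\pnai{s_i}{n}{a}$.

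Finally, I would combine this with Lemma~\ref{dec}(i), which in the present case produces the linear recurrence $|\bar{\Delta}^{(s_i)}_{n}|=1+\sum_{l=1}^{n}\sum_{j=1}^{2}K^{l}(s_i,s_j)$ whose growth rate is the golden mean $\varphi$. The topological entropy is then
\[
h\;=\;\lim_{n\to\infty}\frac{\log\pni{s_1}{n}}{|\bar{\Delta}^{(s_1)}_{n}|}\;=\;\lim_{n\to\infty}\frac{\log\pni{s_2}{n}}{|\bar{\Delta}^{(s_2)}_{n}|},
\]
and Theorem~\ref{exists_inf} guarantees the existence of this limit as well as the identity $h=\inf_{n\ge 0}\max_{j}\log\pni{s_j}{n}/|\bar{\Delta}^{(s_j)}_{n}|$, so truncating the recurrences above at any finite level $n$ yields a concrete upper bound that converges to $h$. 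The only real subtlety—and the one already handled by Theorems~\ref{same} and~\ref{exists_inf}—is that the two stems in this Fibonacci geometry grow at incommensurate rates, so one must appeal to primitivity of $K$ to know that the two per-stem entropies coincide; beyond that, the recurrences above are entirely routine.
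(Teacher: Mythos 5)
Your proposal is correct and follows essentially the same route as the paper: the paper derives this corollary directly from Theorem~\ref{thm:entropy_comparison} by observing that the first row of the golden mean matrix is full (and $K$ is primitive, as the standing hypothesis requires), with the recurrence system delegated to \cite{BCH-JAC2020} and to the normalized recursion $\bar{\mathbf{p}}^{(s_j)}_n=(A_1 \mathbf{p}^{(s_1)}_{n-1})^{K(s_j,s_1)} \odot \cdots$ of Algorithm~\ref{alg:stem_entropy}, which is exactly the system you write out for $\pnai{s_i}{n}{a}$. The only quibble is terminological: the two semiballs $|\bar{\Delta}^{(s_1)}_n|$ and $|\bar{\Delta}^{(s_2)}_n|$ both grow at rate $\varphi$ (only their leading constants differ), so they are not ``incommensurate,'' but this does not affect your argument.
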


A Markov tree shift $X_{\mathbf{A}}$ on $G$ is called a \emph{hom Markov tree shift} if $A_i = A_j$ for all $i, j$. From the physical viewpoint, such a system is isotropic and homogeneous; in other words, two symbols are forbidden to sit next to each other in all coordinate directions once they are forbidden in some direction. The class of hom shift spaces plays an important role in the investigation of physical systems. Suppose that the matrix $K$ has a constant row sum. The theorem below reveals that, not only the topological entropy of a hom Markov tree shift exists, the stem entropy and the topological entropy also coincide.

\begin{theorem} \label{thm:hom-shift_free_group}
    Suppose $m=\sum_{j=1}^k K(s_i,s_j)=\sum_{j=1}^k K(s_{i'},s_j)$ for every $1 \le i, i' \le k$ and $X = X_{\mathbf{A}}$ is a hom Markov tree shift. Then the topological entropy exists and $\lim_{n \to \infty} \frac{\log \pn{n}}{\norm{\Delta_n}} = h^{(s)}$.
\end{theorem}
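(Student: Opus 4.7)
The plan is to prove the upper and lower bounds $\limsup_n \log\pn{n}/\norm{\Delta_n} \le h^{(s)}$ and $\liminf_n \log\pn{n}/\norm{\Delta_n} \ge h^{(s)}$ by first reducing the problem to a single scalar comparison via a symmetrization lemma.

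First, I would prove, by induction on $n$, that under our hypotheses the entire vector $\mathbf{p}^{(s_i)}_n := (\pnai{s_i}{n}{b})_{b \in \alphabet}$ is independent of $i$. The base case $n=0$ is immediate, since $\bar{\Delta}^{(s_i)}_0 = \{s_i\}$ gives $\pnai{s_i}{0}{b} \equiv 1$. For the inductive step, observe that the subtree rooted at $s_i s_j$ is, as a rooted labeled tree for the Markov shift, isomorphic to $\bar{\Delta}^{(s_j)}_{n-1}$, yielding the recursion
\[
\pnai{s_i}{n}{b} = \prod_{j \in N(i)} (A\mathbf{p}^{(s_j)}_{n-1})_b, \qquad N(i) := \{j : K(s_i,s_j)=1\}.
\]
The induction hypothesis $\mathbf{p}^{(s_j)}_{n-1} = \mathbf{p}_{n-1}$ together with the identity $|N(i)| = m$ collapses this to $\pnai{s_i}{n}{b} = ((A\mathbf{p}_{n-1})_b)^m$, manifestly $i$-independent. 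Writing $w(a) := (A\mathbf{p}_{n-1})_a$ and running the same bookkeeping at $1_G$ produces the clean identities
\[
\pni{s}{n} = \sum_b w(b)^m, \qquad \pn{n} = \sum_a w(a)^k.
\]

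Given this reduction, the upper bound follows the pattern of Theorem \ref{thm:entropy_comparison}: the decomposition $\Delta_n = \{1_G\} \sqcup \bigsqcup_i \bar{\Delta}^{(s_i)}_{n-1}$ gives $\pn{n} \le \norm{\alphabet}(\pni{s}{n-1})^k$, and dividing by $\norm{\Delta_n}$ and invoking Theorem \ref{exists_inf} along with the asymptotic $\norm{\Delta_n}/\norm{\bar{\Delta}^{(s_i)}_{n-1}} \to k$—a consequence of $\norm{\bar{\Delta}^{(s_i)}_n} = (m^{n+1}-1)/(m-1)$ via Lemma \ref{dec}(i) and $K\mathbf{1} = m\mathbf{1}$—yields $\limsup_n \log\pn{n}/\norm{\Delta_n} \le h^{(s)}$.

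For the lower bound, I would apply the power-mean inequality with exponents $k \ge m$,
\[
\Big(\tfrac{1}{\norm{\alphabet}}\sum_a w(a)^k\Big)^{1/k} \ge \Big(\tfrac{1}{\norm{\alphabet}}\sum_a w(a)^m\Big)^{1/m},
\]
equivalently $\pn{n} \ge \norm{\alphabet}^{1-k/m}(\pni{s}{n})^{k/m}$. Taking logarithms, dividing by $\norm{\Delta_n}$, and using the volume ratio $\norm{\bar{\Delta}^{(s_i)}_n}/\norm{\Delta_n} \to m/k$ together with $\log\pni{s}{n}/\norm{\bar{\Delta}^{(s_i)}_n} \to h^{(s)}$ give
\[
\liminf_n \frac{\log\pn{n}}{\norm{\Delta_n}} \ge \frac{k}{m}\cdot\frac{m}{k}\cdot h^{(s)} = h^{(s)},
\]
which combined with the upper bound completes the proof. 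The one nontrivial step is the symmetrization—noteworthy because the rows of $K$ may differ so the subtrees $\bar{\Delta}^{(s_i)}_n$ need not be mutually isomorphic, yet the homogeneity of $A$ and the constancy of $|N(i)|$ force the label-indexed pattern counts to coincide; everything else reduces to routine manipulation of volume ratios and the power-mean inequality.
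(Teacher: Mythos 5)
Your proof is correct and follows essentially the same route as the paper's: after the symmetrization reduces everything to comparing $\pni{s_i}{n}=\sum_a w(a)^m$ with $\pn{n}=\sum_a w(a)^k$, the paper likewise uses the convexity of $x^{k/m}$ (your power-mean inequality) for the lower bound, the reverse ($\ell^m$ versus $\ell^k$) comparison or the crude product bound for the upper bound, and the same volume-ratio asymptotics together with Theorem \ref{exists_inf}. The only differences are cosmetic — you prove the $i$-independence of $(\pnai{s_i}{n}{b})_b$ by an explicit induction where the paper merely asserts it, and you invoke \eqref{eq:top<=stem} for the upper direction where the paper uses $(\pni{s_i}{n})^{k/m}\ge \pn{n}$.
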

\begin{proof} 
Since $m=\sum_{j=1}^k K(s_i,s_j)=\sum_{j=1}^k K(s_{i'},s_j)$ for every $1 \le i, i' \le k$ and $A_{1}=A_{2}=\cdots=A_{k}=A$, it follows immediately that $\pnaip{s_i}{n}{a} = \pnaip{s_j}{n}{a}$ for every $s_i, s_j \in S_k$, for which we simply denote $\pnap{n}{a}$ in the rest of the proof.
Note that since $x^{\frac{k}{m}}$ is convex, the following inequality holds for every $s_i \in S_k$:
\begin{align*}
        (\pni{s_i}{n})^{\frac{k}{m}}&=(\sum_{a=1}^{\norm{\alphabet}} (\pnap{n}{a})^m)^{\frac{k}{m}} \\
        &=(\norm{\alphabet} \sum_{a=1}^{\norm{\alphabet}} \frac{1}{\norm{\alphabet}} \cdot (\pnap{n}{a})^{m})^{\frac{k}{m}} \\
        &\le {\norm{\alphabet}}^{\frac{k-m}{m}} \sum_{a=1}^{\norm{\alphabet}} (\pnap{n}{a})^{k} \\
        &={\norm{\alphabet}}^{\frac{k-m}{m}} \pn{n}.
\end{align*}
On the other hand, it can be deduced by applying Minkowski inequality that
\begin{equation*}
    (\pni{s_i}{n})^{\frac{k}{m}}=(\sum_{a=1}^{\norm{\alphabet}} (\pnap{n}{a})^m)^{\frac{k}{m}} \ge \sum_{j=1}^{\norm{\alphabet}} (\pnap{n}{a})^{k} = \pn{n}.
\end{equation*}
By combining the inequalities above , it yields that $(\pni{s_i}{n})^{\frac{k}{m}} \ge \pn{n} \ge \norm{\alphabet}^{\frac{m-k}{m}} (\pni{s_i}{n})^{\frac{k}{m}}$ and thus
\begin{align*}
\frac{\log \pni{s_i}{n}}{\norm{\bar{\Delta}^{(s_i)}_n}} \frac{\frac{k}{m}\norm{\bar{\Delta}^{(s_i)}_n}}{\norm{\Delta_n}} + \frac{m-k}{m}\frac{\log \norm{\alphabet}}{\norm{\Delta_n}}
&=\frac{\log \pni{s_i}{n}}{\norm{\bar{\Delta}^{(s_i)}_n}}\frac{\frac{k}{m}\norm{\bar{\Delta}^{(s_i)}_n}}{\frac{k}{m}(\norm{\bar{\Delta}^{(s_i)}_n}-1)+1}+\frac{m-k}{m}\frac{\log \norm{\alphabet}}{\norm{\Delta_n}} \\
&\le \frac{\log \pn{n}}{\norm{\Delta_n}} \\
&\le \frac{\log \pni{s_i}{n}}{\norm{\bar{\Delta}^{(s_i)}_n}}\frac{\frac{k}{m}\norm{\bar{\Delta}^{(s_i)}_n}}{\norm{\Delta_n}}=\frac{\log \pni{s_i}{n}}{\norm{\bar{\Delta}^{(s_i)}_n}}\frac{\frac{k}{m}\norm{\bar{\Delta}^{(s_i)}_n}}{\frac{k}{m}(\norm{\bar{\Delta}^{(s_i)}_n}-1)+1}.
\end{align*}
Since $\lim_{n\rightarrow\infty}\frac{\log \pni{s_i}{n}}{\norm{\bar{\Delta}^{(s_i)}_n}}$ is proved to be $h^{(s)}$ for all $s_i \in S_k$ in Theorem \ref{exists_inf}, the proof is finished.
\end{proof}

\begin{example}
    A class of groups satisfying the assumption of Theorem \ref{thm:hom-shift_free_group} is the Bethe lattice, for which the matrices $K$'s have each diagonal entry 0 and each non-diagonal entry 1. For instance, the Bethe lattice of order 3 is provided in Figure \ref{fig:Z_2_free_product_intro}.
\end{example}

An immediate application of Theorem \ref{thm:hom-shift_free_group} is that the topological entropy of a hom Markov tree shift on a free group exists. Suppose that $\mathbf{A} = (A_1, A_2, \ldots, A_k)$. We denote by $\mathbf{A}^t = (A_1^t, A_2^t, \ldots, A_k^t)$ the $k$-tuple of transpose matrices of $\mathbf{A}$. Theorem \ref{thm:hom-shift_free_group} is further generalized to the following proposition.

\begin{proposition} \label{prop:hom-shift_free_group}
     Let $G = F_k$ be a free group of rank $k$. That is, $G = \langle S_{2k} | K \rangle$ with $K(s_i, s_j) = 0$ if and only if $|i-j| = k$. Suppose $X = X_{\mathbf{A},\mathbf{A}^t}$ is a Markov shift space over $F_k$ with $A_1=A_2=\cdots=A_k=A$ indexed by a finite alphabet $\mathcal{A}$. Then the limit $\lim_{n \to \infty} \frac{\log \pn{n}}{\norm{\Delta_n}}$ exists and equals $h^{(s)}$.
\end{proposition}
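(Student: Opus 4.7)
The plan is to sandwich $\log p_n / |\Delta_n|$ between two quantities that both converge to $h^{(s)}$. The first task is to set up a clean recursive bookkeeping: by the isotropy hypothesis $A_1=\cdots=A_k=A$, the number of valid labelings of a depth-$m$ branch $\bar{\Delta}^{(s_i)}_m$ with root label $c$ depends only on whether the generator $s_i$ is forward ($i\le k$) or inverse ($i>k$); call these $q^+_{m,c}$ and $q^-_{m,c}$. Setting $u_a:=\sum_c A(a,c)q^+_{n-1,c}$ and $v_a:=\sum_c A(c,a)q^-_{n-1,c}$, one checks that
\[
p_n=\sum_{a\in\alphabet} u_a^{k} v_a^{k},\qquad p^{(s_i)}_n = \sum_a u_a^{k}v_a^{k-1}\ (i\le k),\qquad p^{(s_i)}_n=\sum_a u_a^{k-1}v_a^{k}\ (i>k),
\]
so each of the three quantities is a weighted sum in the same two sequences $(u_a),(v_a)$, only with different exponents. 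This is the identity I will exploit.

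For the upper bound, I would use the algebraic factorization $(u_av_a)^k = (u_a^kv_a^{k-1})^{k/(2k-1)}(u_a^{k-1}v_a^k)^{k/(2k-1)}$ together with Cauchy--Schwarz, and the trivial estimate $\max_a u_a^kv_a^{k-1}\le p^{(s_1)}_n$, to deduce
\[
p_n \le \bigl(p^{(s_1)}_n\bigr)^{k/(2k-1)}\bigl(p^{(s_{k+1})}_n\bigr)^{k/(2k-1)}.
\]
Dividing $\log p_n$ by $|\Delta_n|$, using $|\bar{\Delta}^{(s_i)}_n|/|\Delta_n|\to(2k-1)/(2k)$, and invoking Theorem~\ref{exists_inf} (which equates $\lim\log p^{(s_i)}_n/|\bar{\Delta}^{(s_i)}_n|$ with $h^{(s)}$ for every $i$) gives $\limsup \log p_n/|\Delta_n|\le(k/(2k-1))\cdot 2h^{(s)}\cdot(2k-1)/(2k)=h^{(s)}$.

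For the lower bound, Jensen's inequality on the convex function $x\mapsto x^k$ yields $p_n\ge |\alphabet|^{1-k}\bigl(\sum_a u_av_a\bigr)^{k}$. Expanding,
\[
\sum_a u_av_a = \sum_{c,d} A^2(d,c)\,q^+_{n-1,c}\,q^-_{n-1,d}.
\]
When $A^2$ has no zero entry (e.g.\ for Piantadosi's golden-mean case), this is at least $p^{(s_1)}_{n-1}p^{(s_{k+1})}_{n-1}$, and after taking logs and dividing by $|\Delta_n|$, using $|\bar{\Delta}^{(s_i)}_{n-1}|/|\Delta_n|\to 1/(2k)$ and Theorem~\ref{exists_inf}, one obtains $\liminf\log p_n/|\Delta_n|\ge k\cdot 2\cdot h^{(s)}/(2k)=h^{(s)}$. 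Combined with the upper bound, this gives the desired limit.

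The main obstacle is the positivity step in the lower bound: in the full generality where $A^2$ may have zero entries, the inequality $\sum_a u_av_a\ge p^{(s_1)}_{n-1}p^{(s_{k+1})}_{n-1}$ can fail. The fix will be to iterate the branch recursion a bounded number $M$ of steps, replacing $A^2$ by a higher power of $A$ (or by using the automatic primitivity of $K$ in $F_k$ for $k\ge 2$) with full support; this produces an estimate of the form $\sum_a u_av_a\ge c\cdot p^{(s_1)}_{n-M}p^{(s_{k+1})}_{n-M}$, which after division by $|\Delta_n|$ still delivers $h^{(s)}$ in the limit because the index shift contributes only an $o(1)$ correction.
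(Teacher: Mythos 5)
Your setup and your upper bound are essentially sound: the identity $p_n=\sum_a u_a^k v_a^k$, $p^{(s_1)}_n=\sum_a u_a^k v_a^{k-1}$, $p^{(s_1^{-1})}_n=\sum_a u_a^{k-1}v_a^{k}$ is exactly the decomposition the paper works with, and your H\"older estimate $p_n\le \bigl(p^{(s_1)}_n\bigr)^{k/(2k-1)}\bigl(p^{(s_1^{-1})}_n\bigr)^{k/(2k-1)}$ is a valid (if slightly different) route to $\limsup_n \log p_n/\norm{\Delta_n}\le h^{(s)}$; the paper instead just reuses the generic bound $p_n\le\norm{\alphabet}\prod_j p^{(s_j)}_{n-1}$ from \eqref{eq:top<=stem}. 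The genuine gap is in the lower bound, and you have located it yourself but your proposed repair does not close it. The step $\sum_a u_av_a=\sum_{c,d}A^2(d,c)\,q^+_{n-1,c}\,q^-_{n-1,d}\ge p^{(s_1)}_{n-1}p^{(s_1^{-1})}_{n-1}$ requires $A^2$ to be entrywise positive, whereas the proposition assumes nothing about $A$ beyond $A_1=\cdots=A_k=A$. Iterating the branch recursion replaces $A^2$ by a word in $A$ and $A^t$, and for a non-primitive $A$ (e.g.\ a permutation matrix, or $A=I$) no such word is ever positive; the primitivity of $K$ is irrelevant here, since the obstruction lives in the label constraints joining a forward branch to an inverse branch across the root, and those are governed solely by $A$ and $A^t$. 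As written, your argument therefore proves the statement only under an additional primitivity-type hypothesis on $A$, which is strictly weaker than the proposition.

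The paper's lower bound avoids any positivity assumption by a division trick in place of your Jensen pairing. Writing $p^{(s_1)}_n=\sum_a (q^+_{n;a})^k(q^-_{n;a})^{k-1}$, it chooses $a_n$ with $(q^+_{n;a_n})^k(q^-_{n;a_n})^{k-1}\ge p^{(s_1)}_n/\norm{\alphabet}> e^{(h^{(s)}-\epsilon)\norm{\bar{\Delta}^{(s_1)}_n}}$ for large $n$ (Theorem \ref{exists_inf}); since every single factor obeys the upper bound $q^{\pm}_{n;a_n}< e^{(h^{(s)}+\epsilon)\norm{\bar{\Delta}^{(s_1)+}_n}}$ (your own first claim), dividing the product bound by the other $2k-2$ factors yields $q^-_{n;a_n}\ge e^{(h^{(s)}-(4k-3)\epsilon)\norm{\bar{\Delta}^{(s_1)+}_n}}$ up to a constant. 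Multiplying back, $p_n\ge p_{n;a_n}=(q^+_{n;a_n})^k(q^-_{n;a_n})^{k-1}\cdot q^-_{n;a_n}$ is bounded below by $e^{(h^{(s)}-(4k-3)\epsilon)(\norm{\bar{\Delta}^{(s_1)}_n}+\norm{\bar{\Delta}^{(s_1)+}_n})}$ times a constant, and $\norm{\bar{\Delta}^{(s_1)}_n}+\norm{\bar{\Delta}^{(s_1)+}_n}$ equals $\norm{\Delta_n}$ up to an additive constant. The point is that the single letter $a_n$ which makes the $(2k-1)$-fold branch product large forces each individual branch count, in particular $q^-_{n;a_n}$, to be not too small; no connectivity of $A$ is invoked. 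You should either adopt this selection-and-division argument for your lower bound or accept the extra hypothesis on $A$.
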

\begin{proof}
    For simplicity, we write $\pnap{n}{a}^+ = \pnaip{s_i}{n}{a} = \pnaip{s_j}{n}{a}$, $\pnap{n}{a}^- = \pnaip{s_i^{-1}}{n}{a}=\pnaip{s_j^{-1}}{n}{a}$, $\norm{\bar{\Delta}^{(s_i)}_n}=\norm{\bar{\Delta}^{(s_i^{-1})}_n}=\norm{\bar{\Delta}_n}$ and $\norm{\bar{\Delta}^{(s_i)+}_n}=\norm{\bar{\Delta}^{(s_i^{-1})+}_n}=\norm{\bar{\Delta}^{+}_n}$ in the rest of the proof.
    
    First, we claim that $\limsup_{n \to \infty} \frac{\log \pnap{n}{a}^+}{\norm{\bar{\Delta}^+_n}} \le h^{(s)}$ and that $\limsup_{n \to \infty} \frac{\log \pnap{n}{a}^-}{\norm{\bar{\Delta}^+_n}} \le h^{(s)}$. To show this, note that $\pnap{n}{a}^+ \le \pni{s_1}{n-1}$ and thus $\frac{\log \pnap{n}{a}^+}{\norm{\bar{\Delta}^{+}_n}} \le \frac{\log \pni{s_1}{n-1}}{\norm{\bar{\Delta}^{(s_1)}_{n-1}}}$. The inequality then holds by taking limit superior of both sides, and the same arguments apply to $\pnap{n}{a}^-$.
    
    Now we claim that $\lim_{n \to \infty} \frac{\pn{n}}{\norm{\Delta_n}}$ exists and equals $h^{(s)}$. Since it follows from \eqref{eq:top<=stem} that $\limsup_{n \to \infty} \frac{\log \pn{n}}{\norm{\Delta_n}} \le h^{(s)}$, it is left to show that $\liminf_{n \to \infty} \frac{\log \pn{n}}{\norm{\Delta_n}} \ge h^{(s)}$. Since $\pni{s_1}{n} = \sum_{a \in \alphabet} (\pnap{n}{a}^+)^{k} \cdot (\pnap{n}{a}^-)^{k-1}$, there exists $a_n \in \alphabet$ for each $n$ such that $(\pnap{n}{a_n}^+)^{k} \cdot (\pnap{n}{a_n}^-)^{k-1} \ge \frac{\pni{s_1}{n}}{\norm{\alphabet}}$. Hence, by applying Theorem \ref{exists_inf} and the claim above, for every $\epsilon > 0$ there exists $N \in \Nint$ such that 
    \[
        \pnap{n}{a_n}^+,\pnap{n}{a_n}^- < e^{(h^{(s)} + \epsilon) \norm{\bar{\Delta}^{+}_n}},
    \]
    and that
    \[
        (\pnap{n}{a_n}^+)^{k} \cdot (\pnap{n}{a_n}^-)^{k-1} \ge \frac{1}{\norm{\alphabet}} \pni{s_1}{n} > e^{(h^{(s)}-\epsilon) \norm{\bar{\Delta}_n}},
    \]
    whenever $n \ge N$. This implies
    \begin{align*}
        \pnap{n}{a_n}^- & = \frac{(\pnap{n}{a_n}^+)^{k} \cdot (\pnap{n}{a_n}^-)^{k-1}}{(\pnap{n}{a_n}^+)^{k} \cdot (\pnap{n}{a_n}^-)^{k-2}} \\
        & \ge e^{(h^{(s)} - \epsilon) \norm{\bar{\Delta}_n} - (h^{(s)}+\epsilon) \norm{\bar{\Delta}^{+}_n} (2k-2)} \\
        & = e^{(h^{(s)} - \epsilon) \left[(2k-1) \norm{\bar{\Delta}^{+}_n} - (2k-2)\right] - (h^{(s)}+\epsilon) \norm{\bar{\Delta}^{+}_n} (2k-2)} \\
        & = e^{-(2k-2) (h^{(s)} - \epsilon)} e^{(h^{(s)} - (4k-3)\epsilon ) \norm{\bar{\Delta}^{+}_n}}.
    \end{align*}
    Hence,
    \begin{align*}
        \pna{n}{a_n} & = (\pnap{n}{a_n}^+)^{k} \cdot (\pnap{n}{a_n}^-)^{k-1} \cdot \pnap{n}{a_n}^- \\
        & \ge e^{(h^{(s)} - \epsilon) \norm{\bar{\Delta}_n}} \cdot e^{-(2k-2) (h^{(s)} - \epsilon)} e^{(h^{(s)} - (4k-3)\epsilon ) \norm{\bar{\Delta}^{+}_n}} \\
        & \ge e^{(h^{(s)} - (4k-3) \epsilon) \norm{\bar{\Delta}_n}} \cdot e^{-(2k-2) (h^{(s)} - \epsilon)} e^{(h^{(s)} - (4k-3)\epsilon ) \norm{\bar{\Delta}^+_n}} \\
        & = e^{(h^{(s)} - (4k-3) \epsilon) (\norm{\bar{\Delta}_n}+\norm{\bar{\Delta}^{+}_n})} \cdot e^{-(2k-2) (h^{(s)} - \epsilon)} \\
    \end{align*}
    Hence, one obtains 
    \[
    \liminf_{n \to \infty} \frac{\pn{n}}{\norm{\Delta_n}} \ge \liminf_{n \to \infty} \frac{\pna{n}{a_n}}{\norm{\Delta_n}} \ge h^{(s)}.
    \]
    This finishes the proof.
\end{proof}

Using the same technique as above, one can also obtain a variation of Proposition \ref{prop:hom-shift_free_group} as follows.

\begin{proposition} \label{prop:top=stem-Markov-free-group}
     Suppose $\mathcal{A}$ is a finite alphabet with $|\mathcal{A}| \leq 2 k-1$. Let $X_{\mathbf{A},\mathbf{A}^t}$ be a Markov shift over $F_k$ with $\mathbf{A} = (A_1, A_2, \ldots, A_k)$. Then the topological entropy of $X$ exists and equals $h^{(s)}$.
\end{proposition}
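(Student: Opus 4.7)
The proof mirrors that of Proposition \ref{prop:hom-shift_free_group}, replacing the homogeneity hypothesis by a two-fold pigeonhole argument that exploits $\norm{\mathcal{A}} \le 2k-1 < 2k$. As in \eqref{eq:top<=stem}, decomposing each pattern in $B_n$ according to $u_{1_G}$ and its restrictions to the $2k$ subtrees $\bar{\Delta}^{(s_j)}_{n-1}$ yields $\pn{n} \le \norm{\mathcal{A}} \prod_{j=1}^{2k} \pni{s_j}{n-1}$, so Theorem \ref{exists_inf} produces $\limsup_{n \to \infty} (\log \pn{n})/\norm{\Delta_n} \le h^{(s)}$.

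For the lower bound, I would abbreviate $Y_j(a) := \pnaip{s_j}{n-1}{a}$ and denote by $j \mapsto j^c$ the involution on $\{1,\ldots,2k\}$ satisfying $s_j s_{j^c} = 1_{F_k}$ (so $j^c = j+k$ if $j \le k$, and $j^c = j-k$ otherwise). The recursive structure of the Markov tree shift on $F_k$ gives the factorizations
\[
\pna{n}{a} = \prod_{j=1}^{2k} Y_j(a) \qquad \text{and} \qquad \pnai{s_i}{n}{a} = \prod_{j \neq i^c} Y_j(a).
\]
Fix $\epsilon > 0$. By Theorem \ref{exists_inf}, for all sufficiently large $n$ and every $i$ one has $e^{(h^{(s)}-\epsilon)\norm{\bar{\Delta}_n}} \le \pni{s_i}{n} \le e^{(h^{(s)}+\epsilon)\norm{\bar{\Delta}_n}}$, with the analogous bound at stage $n-1$. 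Pigeonholing $\pni{s_i}{n} = \sum_a \pnai{s_i}{n}{a}$ yields, for each $i$, some $a_i^\ast \in \mathcal{A}$ with $\pnai{s_i}{n}{a_i^\ast} \ge \pni{s_i}{n}/\norm{\mathcal{A}}$.

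The hypothesis $\norm{\mathcal{A}} \le 2k-1$ now forces the map $i \mapsto a_i^\ast$ to collide: there exist $i \ne i'$ with $a_i^\ast = a_{i'}^\ast =: a_n$, and since $i \mapsto i^c$ is a bijection, $i^c \ne i'^c$ as well. For every $j \ne i^c$, isolating $Y_j(a_n)$ in $\pnai{s_i}{n}{a_n} = \prod_{j'' \ne i^c} Y_{j''}(a_n)$ and bounding the remaining $2k-2$ factors by $Y_{j''}(a_n) \le \pni{s_{j''}}{n-1} \le e^{(h^{(s)}+\epsilon)\norm{\bar{\Delta}_{n-1}}}$ gives, via $\norm{\bar{\Delta}_n} = 1 + (2k-1)\norm{\bar{\Delta}_{n-1}}$,
\[
Y_j(a_n) \ge C^{-1} e^{(h^{(s)}-(4k-3)\epsilon)\norm{\bar{\Delta}_{n-1}}}
\]
for a subexponential constant $C$. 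The analogous estimate extracted from the hit at $s_{i'}$ covers every $j \ne i'^c$. Because $i^c \ne i'^c$, every index $j \in \{1, \ldots, 2k\}$ falls into at least one of these two complements, so the bound on $Y_j(a_n)$ holds uniformly in $j$.

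Multiplying yields $\pn{n} \ge \pna{n}{a_n} = \prod_{j=1}^{2k} Y_j(a_n) \ge C^{-2k} e^{2k(h^{(s)}-(4k-3)\epsilon)\norm{\bar{\Delta}_{n-1}}}$, and dividing by $\norm{\Delta_n} = 1 + 2k\norm{\bar{\Delta}_{n-1}}$ before sending $\epsilon \to 0^+$ produces $\liminf_{n \to \infty} (\log \pn{n})/\norm{\Delta_n} \ge h^{(s)}$. The main technical obstacle lies in the bookkeeping of the double pigeonhole: one must verify that the two extraction routes together cover every index $j$ (for which the involution property of $j \mapsto j^c$ is essential) and carefully track the exponential factors when algebraically isolating each $Y_j(a_n)$ from its product.
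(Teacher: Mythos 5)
Your proposal is correct and follows essentially the same route as the paper's own proof: the upper bound via \eqref{eq:top<=stem}, the pigeonhole over the $2k$ generators forced by $\norm{\mathcal{A}} \le 2k-1$ to produce a collision $a_{i}^\ast = a_{i'}^\ast$ with $i^c \ne i'^c$, and the division trick that isolates the branch factor missing from one product by using the other, yielding the same $(4k-3)\epsilon$ exponent. The only cosmetic difference is that you derive a uniform lower bound on all $2k$ branch counts $Y_j(a_n)$ and multiply them, whereas the paper bounds only the single missing factor $\pnaip{z_1^{-1}}{n}{a_{n;z_1}}$ and appends it to the already-controlled product; these are equivalent.
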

\begin{proof}
    For simplicity, we write $\norm{\bar{\Delta}^{(s_i)}}=\norm{\bar{\Delta}^{(s_i^{-1})}}=\norm{\bar{\Delta}_n}$ and $\norm{\bar{\Delta}^{(s_i)+}}=\norm{\bar{\Delta}^{(s_i^{-1})+}}=\norm{\bar{\Delta}^{+}_n}$ in the rest of the proof.

    By applying the argument in Proposition \ref{prop:hom-shift_free_group}, one obtains that 
    \begin{equation} \label{eq:limsup_upper_bound}
        \limsup \frac{\log \pnaip{s_i}{n}{a}}{\norm{\bar{\Delta}^{(s_i)}_n}} \le h^{(s)}
    \end{equation}
    for every $s_i \in S_{2 k}$. Now we claim that $\lim_{n \to \infty} \frac{\pn{n}}{\norm{\Delta_n}}$ exists and equals $h^{(s)}$. Since it follows from \eqref{eq:top<=stem} that $\limsup_{n \to \infty} \frac{\log \pn{n}}{\norm{\Delta_n}} \le h^{(s)}$, it is left to show that $\liminf_{n \to \infty} \frac{\log \pn{n}}{\norm{\Delta_n}} \ge h^{(s)}$. Since $\pni{z}{n} = \sum_{a \in \alphabet} \prod_{w \ne z^{-1}} \pnaip{w}{n}{a}$ for every $z \in S_{2 k}$, there exists $a_{n;z} \in \alphabet$ for each $n$ such that $\prod_{w \ne z^{-1}} \pnaip{w}{n}{a_{n;z}} \ge \frac{\pni{z}{n}}{\norm{\alphabet}}$. Hence, by applying Theorem \ref{exists_inf} and \eqref{eq:limsup_upper_bound}, for every $\epsilon > 0$ there exists $N \in \Nint$ such that 
    \[
        \pnaip{w}{n}{a_{n;z}} < e^{(h^{(s)} + \epsilon) \norm{\bar{\Delta}^{+}_n}},
    \]
    and that
    \[
        \prod_{w \ne z^{-1}} \pnaip{w}{n}{a_{n;z}} \ge \frac{1}{\norm{\alphabet}} \pni{z}{n} > e^{(h^{(s)}-\epsilon) \norm{\bar{\Delta}_n}},
    \]
    for all $z, w \in S_{2 k}$ and all $n \ge N$. At this moment, it is noteworthy  that the restriction imposed on the dimension of $A_i$ leads to the coincidence of some $a_{n;z_1}=a_{n;z_2}$ ($z_1 \ne z_2$) by the pigeonhole principle, and thus $K(z_2,z_1^{-1})=1$. These two properties together imply that if $u, v$ are admissible patterns in $X_{\mathbf{A},\mathbf{A}^t}$ with $u_{1_G} = a_{n;z_1} = a_{n;z_2} = v_{1_G}$, $s(u)=\bar{\Delta}^{(z_1)}_n$, and $s(v)=\bar{\Delta}^{(z_2)}_n$, then $\overline{u}$ with support $s(\overline{u}) = \Delta_n$, defined as follows, is also a admissible pattern:
    \[
    \overline{u}_g:=\begin{cases}
    v_g, & \text{if } g=z_2^{-1} g', \norm{g} = \norm{z_2^{-1}} + \norm{g'}; \\
    u_g, & \text{otherwise.}
    \end{cases}
    \]
    As a consequence, 
    \[\pna{n}{a_{n;z_1}} = \pnaip{z_1^{-1}}{n}{a_{n;z_2}} \cdot \prod_{w \ne z_1^{-1}} \pnaip{w}{n}{a_{n;z_1}},
    \]
    and
    \begin{align*}
        \pnaip{z_1^{-1}}{n}{a_{n;z_1}} & = \frac{\prod_{w \ne z_2^{-1}} \pnaip{w}{n}{a_{n;z_1}}}{\prod_{w \ne z_1^{-1},w \ne z_2^{-1}} \pnaip{w}{n}{a_{n;z_1}}} \\
        & \ge e^{(h^{(s)} - \epsilon) \norm{\bar{\Delta}_n} - (h^{(s)}+\epsilon) \norm{\bar{\Delta}^{+}_n} (2k-2)} \\
        & = e^{(h^{(s)} - \epsilon) \left[(2k-1) \norm{\bar{\Delta}_n} - (2k-2)\right] - (h^{(s)}+\epsilon) \norm{\bar{\Delta}^{+}_n} (2k-2)} \\
        & = e^{-(2k-2) (h^{(s)} - \epsilon)} e^{(h^{(s)} - (4k-3)\epsilon ) \norm{\bar{\Delta}^{+}_n}}.
    \end{align*}
    Combining all the results above, it follows that
    \begin{align*}
        \pna{n}{a_n} & = \pnaip{z_1^{-1}}{n}{a_{n;z_1}} \cdot \prod_{w \ne z_1^{-1}} \pnaip{w}{n}{a_{n;z_1}} \\
        & \ge e^{(h^{(s)} - \epsilon) \norm{\bar{\Delta}_n}} \cdot e^{-(2k-2) (h^{(s)} - \epsilon)} e^{(h^{(s)} - (4k-3)\epsilon ) \norm{\bar{\Delta}^{+}_n}} \\
        & \ge e^{(h^{(s)} - (4k-3) \epsilon) \norm{\bar{\Delta}_n}} \cdot e^{-(2k-2) (h^{(s)} - \epsilon)} e^{(h^{(s)} - (4k-3)\epsilon ) \norm{\bar{\Delta}^{+}_n}} \\
        & = e^{(h^{(s)} - (4k-3) \epsilon) (\norm{\bar{\Delta}_n}+\norm{\bar{\Delta}^{+}_n})} \cdot e^{-(2k-2) (h^{(s)} - \epsilon)} \\
    \end{align*}
    Hence, one obtains 
    \[
    \liminf_{n \to \infty} \frac{\pn{n}}{\norm{\Delta_n}} \ge \liminf_{n \to \infty} \frac{\pna{n}{a_{n;z_1}}}{\norm{\Delta_n}} \ge h^{(s)}.
    \]
    This finishes the proof.
\end{proof}

% -----------------------------------------------------
\section{Generalization of Mixing Property}

Aside from the straightforward estimation of topological entropy in the previous section, this section studies from an topological perspective the coincidence between stem entropy and topological entropy. In fact, the exposition in the following is inspired by \cite[Proposition 3.1]{PS-TCS2018} and generalizes the idea of mixing property on hom Markov tree shifts on a strict semigroup to that on finitely generated semigroup expressed as $G = \langle S_k | K \rangle$. We begin with defining the following terms.

\begin{definition}
    Let $G = \langle S_k | K \rangle$ be a finitely generated semigroup. Suppose $X = X_{\mathbf{A}} \subseteq \mathcal{A}^G$ is a Markov tree shift on $G$. A \emph{graph representation of $X$} is a directed graph $\mathbf{G} = (\mathbf{V}, \mathbf{E})$ with vertex set $\mathbf{V} = \alphabet \times S_k$ and with edge set $\mathbf{E} = \{((a,s_i),(b,s_j)) \in \mathbf{V} \times \mathbf{V}: K(s_i,s_j)=1, A_j(a,b) = 1\}$.
    \begin{enumerate}[(i)]
    	\item $\mathbf{G}$ is called \emph{strongly connected} if for every $(a,s_i), (b,s_j) \in \mathbf{V}$ there is a walk of length $N$ from $(a,s_i)$ to $(b,s_j)$ in $\mathbf{G}$ (denoted by $(a,s_i) \dhxrightarrow{N} (b,s_j)$) for some $N$ depending on $(a,s_i)$ and $(b,s_j)$.
    	\item A vertex $(a,s_i) \in \mathbf{V}$ is called a \emph{pivot} if there exist $s_j \in S_k$ and an integer $N \in \Nint$ such that every $(b,s_j) \in \mathbf{V}$ admits a walk $(a,s_i) \dhxrightarrow{N} (b,s_j)$.
    \end{enumerate}
\end{definition}

\begin{example}
    Suppose $G = \langle S_2 | K \rangle$ is associated with the matrix $K=\begin{bmatrix}
		1 & 1 \\
		1 & 0
	\end{bmatrix}$ and
    \[
    A_1=\begin{bmatrix}
    	1 & 1 \\
    	1 & 0
    \end{bmatrix}, A_2=\begin{bmatrix}
    	0 & 1 \\
    	1 & 1
    \end{bmatrix}
    \]
    are the adjacency matrices for the shift space $X_{A_1,A_2} \subset \alphabet^G$. Then, the graph representation of $X_{A_1,A_2}$ is defined as in Figure \ref{fig:graph_representation_1}.
    \begin{figure}
        \centering
        \includegraphics[scale=1.5]{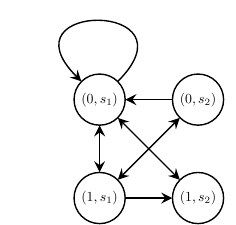}
        \caption{Graph representation of $X_{A_1,A_2}$}
        \label{fig:graph_representation_1}
    \end{figure}
\end{example}

To see the definitions above are related to the mixing property, we prove the following proposition.

\begin{proposition} \label{prop:graph_and_hom_shift}
Suppose that $X_{\mathbf{A}} \subseteq \mathcal{A}^G$ is a hom Markov tree shift, and $\mathbf{G}=(\mathbf{V},\mathbf{E})$ is a graph representation of $X_{\mathbf{A}}$. Then, 
\begin{enumerate}[(i)]
	\item $\mathbf{G}$ is strongly connected if and only if $A$ is irreducible.
	\item $\mathbf{G}$ is strongly connected and contains a pivot if and only if $A$ is primitive.
\end{enumerate}
\end{proposition}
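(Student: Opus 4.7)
The plan is to exploit the fact that the graph $\mathbf{G}$ decouples into two independent layers. An edge $((a,s_i),(b,s_j))$ exists precisely when $K(s_i,s_j)=1$ and $A(a,b)=1$, so a walk $(a,s_i) \dhxrightarrow{N} (b,s_j)$ in $\mathbf{G}$ exists if and only if there is a length-$N$ path from $s_i$ to $s_j$ in the Cayley graph (equivalently $K^N(s_i,s_j)>0$) \emph{and} a length-$N$ path from $a$ to $b$ in the graph of $A$ (equivalently $A^N(a,b)>0$); crucially, the two sequences of intermediate vertices are chosen independently, only the common length $N$ must be matched. Throughout I will invoke the running assumption of Section 4 that $K$ is primitive, so there is a threshold $N_K$ beyond which $K^N(s_i,s_j)>0$ for all $i,j$ and all $N\ge N_K$.

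For (i), the forward direction is immediate: strong connectedness produces some $N$ with a walk $(a,s_i)\dhxrightarrow{N}(b,s_j)$, so $A^N(a,b)>0$ for any prescribed $a,b\in\alphabet$, which is exactly irreducibility of $A$. For the converse, let $d$ be the period of $A$. Irreducibility supplies, for each pair $(a,b)$, an $M_0$ such that $A^{M_0+\ell d}(a,b)>0$ for every sufficiently large $\ell\in\Nint$. I choose $\ell$ large enough so that $N:=M_0+\ell d\ge N_K$; then both $K^N(s_i,s_j)>0$ and $A^N(a,b)>0$, yielding the required walk in $\mathbf{G}$.

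For (ii), the backward direction is the easier half: if $A$ is primitive with threshold $N_A$, pick $N\ge\max(N_A,N_K)$, so $A^N(a,b)>0$ and $K^N(s_i,s_j)>0$ hold uniformly; then every vertex $(a,s_i)$ is a pivot with respect to an arbitrary $s_j$. For the forward direction, let $(a,s_i)$ be a pivot witnessed by $s_j$ and $N$, so $A^N(a,b)>0$ for every $b\in\alphabet$. Combined with irreducibility of $A$ from (i), I aim to upgrade this to primitivity. First I propagate the pivot row: since $A$ is irreducible it has no zero column, so $A^{N+\ell+1}(a,b)=\sum_c A^{N+\ell}(a,c)A(c,b)>0$ for every $b$, and induction on $\ell$ gives $A^{N+\ell}(a,b)>0$ for every $\ell\ge 0$ and every $b$. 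For each source $b'$, irreducibility gives $M(b')$ with $A^{M(b')}(b',a)>0$, hence
\[
A^{M(b')+N+\ell}(b',b)\;\ge\;A^{M(b')}(b',a)\,A^{N+\ell}(a,b)\;>\;0.
\]
Setting $N^{\ast}:=N+\max_{b'}M(b')$ and using the row-propagation to absorb the residual $\ell=N^\ast-N-M(b')\ge 0$ for each $b'$ yields $A^{N^\ast}(b',b)>0$ uniformly, proving $A$ primitive.

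The main obstacle is the forward direction of (ii): converting the single exponent $N$ supplied by the pivot into a single exponent $N^\ast$ at which $A^{N^\ast}$ is entirely positive. This is where the pivot's full-row positivity of $A^N(a,\cdot)$, the absence of zero columns coming from irreducibility, and a uniform choice of padding across all source symbols must be coordinated. Once that mechanism is set up, both (i) and (ii) reduce to aligning one length $N$ that simultaneously meets the $K$- and $A$-positivity thresholds, which is routine given that $K$ is primitive and $A$ satisfies the appropriate irreducibility/primitivity hypothesis.
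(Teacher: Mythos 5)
Your proposal is correct. For part (i) and the backward direction of (ii) you follow essentially the same route as the paper: exploit that in the hom case the $\alphabet$-coordinate and the $S_k$-coordinate of a walk in $\mathbf{G}$ evolve independently, so one only needs to align a single common length $N$ at which both $K^N(s_i,s_j)>0$ (available for all large $N$ by primitivity of $K$) and $A^N(a,b)>0$ (available in an arithmetic progression by irreducibility, or for all large $N$ by primitivity). Where you genuinely diverge is the forward direction of (ii). The paper takes a detour through the adjacency matrix $A_{\mathbf{G}}$ of the product graph: it uses the pivot to list walks reaching every $(b,s_j)$ at time $N$, then builds closed $A$-words at $a$ of every sufficiently large length by a pigeonhole-and-concatenation argument, concludes that $A_{\mathbf{G}}$ is primitive, and finally deduces primitivity of $A$ from that. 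You instead argue directly on powers of $A$: the pivot gives full positivity of the row $A^N(a,\cdot)$; irreducibility forbids zero columns, so this full-row positivity propagates to $A^{N+\ell}(a,\cdot)$ for all $\ell\ge 0$; and prepending a return path $A^{M(b')}(b',a)>0$ with uniform padding produces a single exponent $N^\ast$ with $A^{N^\ast}>0$. Your route is shorter and more elementary, avoiding the concatenation bookkeeping; the paper's route is heavier but yields the stronger intermediate statement that the product graph's adjacency matrix $A_{\mathbf{G}}$ is itself primitive, which is of independent interest for Theorem \ref{thm:group_limit}. Both arguments are sound.
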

\begin{proof}
\textbf{(i)} It is not hard to see that $A$ is irreducible if $\mathbf{G}$ is strongly connected, since for $(a,s_i), (b,s_i) \in \mathbf{V}$, there exists a walk $(a,s_i) (a_1,s_{i_1}) (a_2,s_{i_2}) \cdots (a_{n-1},s_{i_{n-1}}) (b,s_i)$ and thus $a a_1 a_2 \cdots a_{n} b$ is a word admissible by $A$. We now show the converse, i.e., for $(a,s_i), (b,s_j) \in \mathbf{V}$, there exists a walk $(a,s_i) \dhxrightarrow{M} (b,s_j)$. Since $K$ is a primitive matrix, there exists $N$ such that for every $n \in \Nint$ and $s_i,s_j \in S_k$, there is an admissible word $s_i s_{i_1} s_{i_2} \cdots s_{i_{n-1}} s_j$ by $K$. On the other hand, since $A$ is irreducible, for every $a,b \in \alphabet$ there exists an integer $M \ge N$ and an $M$-word $a a_1 a_2 \cdots a_{N-1} b$ admissible by $A$. This results in a walk $(a_,s_i) (a_1, s_{i_1}) \cdots (a_{M-1},s_{i_{M-1}}) (b,s_j)$ in $\mathbf{G}$. This completes the proof.

\textbf{(ii)} 
First of all, we show that $A$ is primitive if the adjacency matrix $A_\mathbf{G}$ of $G$ is primitive. Indeed, since $A_\mathbf{G}$ is primitive, there exists $N$ such that for all $(a,s_i), (b,s_j)$ and $n \ge N$, there exists a admissible walk $(a,s_i) \dhxrightarrow{n} (b,s_j)$ in $\mathbf{G}$. This naturally yields a $(n+1)$-word admissible by $A$ which starts at $a$ and terminates at $b$.

Secondly, we show that $\mathbf{G}$ is strongly connected and contains a pivot provided $A$ is primitive. To this end, we show every $(a,s_i) \in \mathbf{V}$ is a pivot of $\mathbf{G}$. Since $K$ is a primitive matrix, there exists an integer $N_1$ such that for every $s_j \in S_k$ and every $n \ge N_1$, there exists an $(n+1)$-word admissible by $K$ which starts from $s_i$ and terminates at $s_j$. On the other hand, since $A$ is primitive, there exists $N_2 \ge N_1$ such that for every $b \in \alphabet$ there is a admissible word $a a_1 \cdots a_{N_2-1} b$ by $A$. This implies for all $n \ge N_2$ there is a walk $(a,s_i) (a_1,s_{i_1}) \cdots (a_{n-1}, s_{i_{n-1}}) (a_n,s_{i_n})$ in $\mathbf{G}$. This finishes the proof of our claim. Note since every $(a,s_i)$ is a pivot, irreducibility follows
immediately.

Finally, it remains to show that if $\mathbf{G}$ is strongly connected and contains a pivot, then $A_\mathbf{G}$ is primitive. It is also equivalent to show that $\mathbf{G}$ is strongly connected and there exists $(a,s_i) \in \mathbf{V}$ and $N \in \Nint$ such that every $n \ge N$ admits a walk $(a,s_i) \dhxrightarrow{n} (a,s_i)$. Since strong connectedness follows immediately, it is left to show the latter. Suppose $(a,s_i)$ is a pivot such that there exist $s_j \in S_k$ and walks $(a,s_i) \dhxrightarrow{N} (b_k,s_j)$ for every $b_k \in \alphabet$ as follows:
\[
\begin{matrix}
	(a,s_i) (a_{1,2},s_{l_{1,2}}) \cdots (a_{1,{N-1}},s_{l_{1,{N-1}}}) (a,s_j),\\
	(a,s_i) (a_{2,2},s_{l_{2,2}}) \cdots (a_{2,{N-1}},s_{l_{2,{N-1}}}) (b_2,s_j),\\
	\vdots\\
	(a,s_i) (a_{\norm{\alphabet},2},s_{l_{\norm{\alphabet},2}}) \cdots (a_{\norm{\alphabet},{N-1}},s_{l_{\norm{\alphabet},{N-1}}}) (b_{\norm{\alphabet}},s_j).
\end{matrix}
\]
Hence, the following are admissible words by $A$:
\[
\begin{matrix}
	a a_{1,2} \cdots a_{1,N-1} a,\\
	a a_{2,2} \cdots a_{2,N-1} b_2,\\
	\vdots\\
	a a_{\norm{\alphabet},2} \cdots a_{\norm{\alphabet},N-1} b_{\norm{\alphabet}}.
\end{matrix}
\]
From these, we are able to construct a word of length $n+1 \ge N+1$ with both starting and terminating symbol $a$. For instance, when $n=N+2$, we may observe $a_{1,N-2} a_{1,N-1} a= b_k a_{1,N-1} a$ for some $1 \le k \le \norm{\alphabet}$ and thus $a a_{k,2} \cdots a_{k,N-1} b_k a_{1,N-1} a$ is an admissible word by $A$. This process can be done for $N+1 \le n \le 2 N$, and further extension process for $n > 2N$ is done by a proper concatenation with the prefix $a a_{1,2} \cdots a_{1,N} a$. Now since $K$ is a primitive matrix, we can also prove that for every $s_i\in S_k$ and any sufficiently large $n \in \Nint$ there is an $(n+1)$-word admissible by $K$ which starts and terminates at $s_i$ simultaneously. Combining these two facts we are able to construct a walk $(a,s_i) \dhxrightarrow{n} (a,s_i)$ for all sufficiently large $n$, and the proof is completed. 
\end{proof}

Next, we show that the mixing property in the sense of a Markov tree shift results in the coincidence between the stem entropy and topological entropy. 

\begin{theorem} \label{thm:group_limit}
Let $X_{\mathbf{A}} \subseteq \mathcal{A}^G$ be a Markov tree shift on $G$. Suppose $\mathbf{G}=(\mathbf{V},\mathbf{E})$ is a graph representation of $X_{\mathbf{A}}$. Then the topological entropy $h = \lim_{n \to \infty} \frac{\log \pn{n}}{\norm{\Delta_n}}$ exists and $h = h^{(s)}$ provided $\mathbf{G}$ admits a pivot and is strongly connected.
\end{theorem}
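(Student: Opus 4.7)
My plan is to establish $h = h^{(s)}$ by separately proving $\limsup_{n} \frac{\log p_n}{|\Delta_n|} \le h^{(s)}$ and $\liminf_{n} \frac{\log p_n}{|\Delta_n|} \ge h^{(s)}$, with existence of $h^{(s)}$ supplied by Theorem~\ref{exists_inf}. The upper bound needs no new work: it is precisely \eqref{eq:top<=stem} in the proof of Theorem~\ref{thm:entropy_comparison}, which only uses the decomposition of a ball pattern into a root color plus $k$ semi-ball branch patterns and does not rely on the pivot or on strong connectedness.

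For the lower bound, the first step is to observe that strong connectedness of $\mathbf{G}$ together with the existence of a pivot forces the adjacency matrix $M_\mathbf{G}$ of $\mathbf{G}$ to be primitive; this mirrors the final portion of Proposition~\ref{prop:graph_and_hom_shift}(ii) but carried out for general Markov tree shifts rather than hom shifts (the concatenation argument producing closed walks of every sufficiently large length through the pivot goes through verbatim). Hence there exists $N_0 \in \Nint$ such that for every $n \ge N_0$ the entries of $M_\mathbf{G}^n$ are all positive, so any two states in $\mathbf{V}$ are joined by at least one walk of length exactly $n$.

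The main work is then a \emph{conditional stem entropy} lemma:
\[
    \lim_{n\to\infty} \frac{\log p^{(s_i)}_{n;a}}{|\bar{\Delta}^{(s_i)}_n|} = h^{(s)} \qquad \text{for every } (a, s_i) \in \mathbf{V}.
\]
The $\le$ direction is immediate from $p^{(s_i)}_{n;a} \le p^{(s_i)}_n$ and Theorem~\ref{exists_inf}. For the reverse I would adapt the mixing/buffering construction of \cite[Proposition~3.1]{PS-TCS2018} to this setting: pick a state $(b, s_j) \in \mathbf{V}$ that realizes $h^{(s)}$ for its semi-ball; use the length-$N_0$ walk from $(a, s_i)$ to $(b, s_j)$ in $\mathbf{G}$ to template the coloring down a designated branch of $\bar{\Delta}^{(s_i)}_{n+N_0}$ to depth $N_0$; attach an arbitrary semi-ball pattern of $\bar{\Delta}^{(s_j)}_n$ rooted at $b$ at the endpoint; and fill each ``side'' subtree branching off this buffer path with any admissible coloring, which exists by strong connectedness of $\mathbf{G}$. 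Once this lemma is in hand, the lower bound on $p_n$ follows quickly: for any root color $a$ and compatible $b_i$'s with $A_i(a, b_i) = 1$ for all $i$ (such $a$ exists by nontriviality of the shift), $p_n \ge p_{n;a} \ge \prod_{i=1}^k p^{(s_i)}_{n-1;b_i}$; dividing by $|\Delta_n| = 1 + \sum_i |\bar{\Delta}^{(s_i)}_{n-1}|$ and invoking the conditional stem entropy identity yields $\liminf_n \log p_n / |\Delta_n| \ge h^{(s)}$.

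The principal obstacle is the conditional stem entropy lemma, where one must translate a 1-dimensional mixing property (primitivity of $M_\mathbf{G}$, a statement about walks) into a matching tree-theoretic lower bound on root-conditioned semi-ball counts. The delicacy is that a single length-$N_0$ walk in $\mathbf{G}$ templates only the coloring along one branch of the buffered semi-ball, so one must verify, by counting over all eligible attachment branches together with the entropic contribution of the ``side'' subtrees, that the embedding leaks at most $O(1)$ in the exponent rather than a factor on the order of $\rho(K)^{N_0}$; a secondary subtlety is to restrict implicitly to live states of $\mathbf{V}$, i.e., those $(a, s_i)$ with $p^{(s_i)}_{n;a} > 0$ for every $n$, but strong connectedness of $\mathbf{G}$ precludes dead states from mattering.
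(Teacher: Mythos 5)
Your overall architecture coincides with the paper's: the upper bound is \eqref{eq:top<=stem}, the heart of the matter is the root-conditioned identity $\lim_{n}\log \pnai{s_i}{n}{a}/\norm{\bar{\Delta}^{(s_i)}_n}=h^{(s)}$ for every vertex $(a,s_i)$ of $\mathbf{G}$, and the conclusion follows from $\pn{n}\ge\prod_l \pnai{s_l}{n-1}{b_l}$. The gap is in your proof of that identity. The buffering construction you describe --- template one branch of $\bar{\Delta}^{(s_i)}_{n+N_0}$ by a length-$N_0$ walk to a state $(b,s_j)$ realizing $h^{(s)}$, attach one $n$-semiball pattern there, and fill every side subtree with a single admissible coloring --- yields only $\pnai{s_i}{n+N_0}{a}\ge \pnai{s_j}{n}{b}$, hence
\[
\liminf_{n\to\infty}\frac{\log \pnai{s_i}{n+N_0}{a}}{\norm{\bar{\Delta}^{(s_i)}_{n+N_0}}}\ \ge\ h^{(s)}\cdot\lim_{n\to\infty}\frac{\norm{\bar{\Delta}^{(s_j)}_{n}}}{\norm{\bar{\Delta}^{(s_i)}_{n+N_0}}},
\]
and by Lemma \ref{lim2} the last limit is a constant strictly less than $1$ (of order $\lambda^{-(N_0+1)}$), so the bound does not close. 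You correctly flag this as ``the principal obstacle,'' but the fix you gesture at --- letting the side subtrees contribute entropically --- requires a lower bound on the conditioned counts $\pnai{s_l}{n}{c}$ at the side-subtree roots, which is exactly the statement being proved; as written the plan is circular.

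The paper breaks the circularity with a two-stage argument that your proposal would need to supply. Stage one uses only strong connectedness: writing $\underline{h}$ and $\overline{h}$ for the minimal and maximal values of $\liminf_n \log\pnai{s_l}{n}{c}/\norm{\bar{\Delta}^{(s_l)}_n}$ over all states, one routes a walk from a minimizing state to a maximizing state and expands $\pnai{s_i}{N+n}{a}$ as a product over all $M$ boundary positions of conditioned counts; since the normalized sizes of the $M$ branches sum to $1$ in the limit and every factor has rate at least $\underline{h}$ while the designated one has rate $\overline{h}$, the resulting convex combination forces $\underline{h}\ge\overline{h}$, i.e.\ all conditioned liminfs are equal. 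Stage two then uses the pivot: because every $(c,s_j)$ is reached in exactly $N$ steps, one may sum the boundary decompositions over all colors $c$ in the direction $s_j$ and recover the unconditioned count $\pni{s_j}{n}$ up to a factor $1/\norm{\alphabet}$, whose rate is exactly $h^{(s_j)}=h^{(s)}$ by Theorem \ref{exists_inf}, while stage one guarantees the remaining factors contribute at rate at least $\overline{h}-\epsilon$; the convex combination now forces $\overline{h}\ge h^{(s)}$. Neither stage needs primitivity of the adjacency matrix of $\mathbf{G}$, so your preliminary reduction (whose ``verbatim'' transfer from the hom case of Proposition \ref{prop:graph_and_hom_shift}(ii) is itself not immediate, since a non-hom graph representation does not project onto words over a single matrix $A$) is both unproved and unnecessary.
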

\begin{proof}
        First, we show that $\liminf_{n \to \infty} \frac{\pnai{s_i}{n}{a}}{\norm{\bar{\Delta}^{(s_i)}_n}} = \liminf_{n \to \infty} \frac{\pnai{s_j}{n}{b}}{\norm{\bar{\Delta}^{(s_j)}_n}}$ for every $s_i, s_j \in S_k$ and $a, b \in \alphabet$. Suppose 
        \[\liminf_{n \to \infty} \frac{\log \pnai{s_i}{n}{a}}{\norm{\bar{\Delta}^{(s_i)}_n}}=\min \left\{\liminf_{n \to \infty} \frac{\log \pnai{s_l}{n}{c}}{\norm{\bar{\Delta}^{(s_l)}_n}}: s_l \in S_k,c \in \alphabet\right\}=:\underline{h}\] 
        and 
        \[\liminf_{n \to \infty} \frac{\log \pnai{s_j}{n}{b}}{\norm{\bar{\Delta}^{(s_j)}_n}}=\max \left\{\liminf_{n \to \infty} \frac{\log \pnai{s_l}{n}{c}}{\norm{\bar{\Delta}^{(s_l)}_n}}: s_l \in S_k, c \in \alphabet\right\}=:\overline{h}.
        \] 
        We show that $\overline{h}=\underline{h}$. To begin with, we gives an order on $G$ so that we are able to write $\{g_i\}_{i=1}^M = \{g \in \mctree: \norm{g}=N\}$ in the lexicographical order and introduce the notation
        \[
        \pnai{s_i}{N}{a;b_1,\cdots,b_{M}}:=\norm{\{u \in \alphabet^{\bar{\Delta}^{(s_i)}_N}: u \text{ is accepted by } t \in X_{\mathbf{A}}, u_{g_i}=b_i, \forall 1 \le i \le M\}}.
        \]
        Since $\mathbf{G}$ is strongly connected, there exists a walk $(a,s_i) \dhxrightarrow{n} (b,s_j)$ in $\mathbf{G}$. As a consequence, there exists $\pnai{s_i}{N}{a;b_1,\cdots,b,\cdots,b_{m}} \ge 1$ and thus 
\begin{align*}
    \pni{s_i}{N+n} &= \sum_{b_1,\cdots,b_M} \pnai{s_i}{N}{a;b_1,\cdots,b_{M}} \pnai{s_{l_1}}{n}{b_1} \pnai{s_{l_2}}{n}{b_2} \cdots \pnai{s_{l_M}}{n}{b_M} \\
    & \ge \pnai{s_i}{N}{a;b_1,\cdots,b,\cdots,b_{M}} \pnai{s_{l_1}}{n}{b_1} \pnai{s_{l_2}}{n}{b_2} \cdots \pnai{s_j}{n}{b} \cdots \pnai{s_{l_M}}{n}{b_M} \\
    & \ge \pnai{s_{l_1}}{n}{b_1} \pnai{s_{l_2}}{n}{b_2} \cdots \pnai{s_j}{n}{b} \cdots \pnai{s_{l_M}}{n}{b_M}.
\end{align*}
Hence, it yields
\begin{align*}
    \liminf_{n\to\infty} \frac{\log \pni{s_i}{N+n}}{\norm{\bar{\Delta}^{(s_i)}_{N+n}}} \ge \liminf_{n\to\infty} \frac{\log \pnai{s_{l_1}}{n}{b_1}}{\norm{\bar{\Delta}^{(s_{l_1})}_n}} \frac{\norm{\bar{\Delta}^{(s_{l_1})}_n}}{\norm{\bar{\Delta}^{(s_i)}_{N+n}}} + \cdots + \frac{\log \pnai{s_j}{n}{b}}{\norm{\bar{\Delta}^{(s_j)}_n}} \frac{\norm{\bar{\Delta}^{(s_j)}_n}}{\norm{\bar{\Delta}^{(s_i)}_{N+n}}} + \cdots + \frac{\log \pnai{s_{l_M}}{n}{b_M}}{\norm{\bar{\Delta}^{(s_{l_M})}_n}} \frac{\norm{\bar{\Delta}^{(s_{l_M})}_n}}{\norm{\bar{\Delta}^{(s_i)}_{N+n}}}.
\end{align*}
Note that $\lim_{n\to\infty}\frac{\norm{\bar{\Delta}^{(s_l)}_n}}{\norm{\bar{\Delta}^{(s_l)}_{N+n}}}$ is positive for every $s_l \in S_k$ and $\lim_{n\to\infty} \frac{\norm{\bar{\Delta}^{(s_{l_1})}_n}+\cdots+\norm{\bar{\Delta}^{(s_{l_M})}_n}}{\norm{\bar{\Delta}^{(s_{l_M})}_{N+n}}}=1$. It then follows that $\underline{h} \ge \overline{h}$.

Next, we show that $\liminf_{n \to \infty} \frac{\pnai{s_i}{n}{a}}{\norm{\bar{\Delta}^{(s_i)}}} = \limsup_{n \to \infty} \frac{\pnai{s_i}{n}{a}}{\norm{\bar{\Delta}^{(s_i)}_n}}$ for every $s_i \in S_k$ and $a \in \alphabet$. Suppose $(a,s_i)$ is a pivot in $\mathbf{G}$. Then, there exist $N \in \Nint$ and $s_j \in S_k$ such that every $(c,s_j) \in \mathbf{V}$ appears in one of the boundary patterns $b_1,\cdots, c, \cdots, b_M$, and is thus counted in $\pnai{s_i}{N}{a;b_1,\cdots,c, \cdots, b_M}$. On the other hand, it follows from the claim above that for every $\epsilon > 0$, there exists $N'$ such that for every $n \ge N'$,$s_l \in S_k$ and $c \in \alphabet$, 
\begin{equation} \label{eq:liminf_lb}
	\frac{\log \pnai{s_l}{n}{c}}{\norm{\bar{\Delta}^{(s_l)}_n}} \ge \overline{h}-\epsilon.
\end{equation} 
Hence, 
\begin{align*}
	\pnai{s_i}{N+n}{a} & = \sum_{b_1,\cdots,b_M} \pnai{s_i}{N}{a;b_1,\cdots,b_{M}} \pnai{s_{l_1}}{n}{b_1} \pnai{s_{l_2}}{n}{b_2} \cdots \pnai{s_{l_M}}{n}{b_M} \\
	& \ge \sum_{b_1,\cdots,b_M} \frac{1}{\norm{\alphabet}} \sum_{c:(c,s_j) \text{ appears in } b_1,\cdots,b_M} \pnai{s_i}{N}{a;b_1,\cdots,b_{M}} \pnai{s_{l_1}}{n}{b_1} \pnai{s_{l_2}}{n}{b_2} \cdots \pnai{s_{l_M}}{n}{b_M},
\end{align*}
for every product in the first line is counted no more than $\norm{\alphabet}$ times in the second summation in the second line. From equation \eqref{eq:liminf_lb}, one may further derive
\begin{align*}
	\pnai{s_i}{N+n}{a} & \ge \frac{1}{\norm{\alphabet}} \sum_{c} \sum_{\substack{b_1,\cdots,b_M:\\(c,s_j) \text{ appears in } b_1,\cdots,b_M}} \pnai{s_i}{N}{a;b_1,\cdots,b_{M}} \pnai{s_{l_1}}{n}{b_1} \pnai{s_{l_2}}{n}{b_2} \cdots \pnai{s_{l_M}}{n}{b_M}\\
	& \ge \frac{1}{\norm{\alphabet}} \sum_{c} \pnai{s_j}{n}{c} e^{(\overline{h}-\epsilon) (-\norm{\bar{\Delta}^{(s_j)}_n}+\sum_{s_{l_m}} \norm{\bar{\Delta}^{(s_{l_m})}_n}) } \\
	& = \frac{1}{\norm{\alphabet}} \pni{s_j}{n} e^{(\overline{h}-\epsilon) (-\norm{\bar{\Delta}^{(s_j)}_n}+\sum_{s_{l_m}} \norm{\bar{\Delta}^{(s_{l_m})}_n}) }.
	\end{align*}
	The inequality above yields that
	\begin{align*}
	& \hspace{1.5em} \liminf_{n\to\infty} \frac{\log \pnai{s_i}{N+n}{a}}{\norm{\bar{\Delta}^{(s_i)}_{N+n}}} \\
	& \ge \lim_{n\to\infty} \frac{\log \pni{s_j}{n}}{\norm{\bar{\Delta}^{(s_j)}_{n}}} \lim_{n\to\infty} \frac{\norm{\bar{\Delta}^{(s_j)}_{n}}}{\norm{\bar{\Delta}^{(s_i)}_{N+n}}}
	+ (\overline{h}-\epsilon) \lim_{n\to\infty} \frac{-\norm{\bar{\Delta}^{(s_j)}_n}+\sum_{s_{l_m}} \norm{\bar{\Delta}^{(s_{l_m})}_n}}{\norm{\bar{\Delta}^{(s_i)}_{N+n}}} \\
	& = h^{(s_j)} \cdot \lim_{n\to\infty} \frac{\norm{\bar{\Delta}^{(s_j)}_{n}}}{\norm{\bar{\Delta}^{(s_i)}_{N+n}}} + (\overline{h}-\epsilon) \cdot \lim_{n\to\infty} \frac{-\norm{\bar{\Delta}^{(s_j)}_n}+\sum_{s_{l_m}} \norm{\bar{\Delta}^{(s_{l_m})}_n}}{\norm{\bar{\Delta}^{(s_i)}_{N+n}}}.
\end{align*}
It follows as a result that $h^{(s_j)}=\overline{h} = h^{(s)}$, since $\lim_{n\to\infty} \frac{\sum_{s_{l_m}} \norm{\bar{\Delta}^{(s_{l_m})}_n}}{\norm{\bar{\Delta}^{(s_i)}_{N+n}}}=1$,  $\lim_{n\to\infty} \frac{\norm{\bar{\Delta}^{(s_j)}_{n}}}{\norm{\bar{\Delta}^{(s_i)}_{N+n}}} > 0$ and the righthand side of the inequality is a convex combination of $h^{(s_j)}$ and $\overline{h}-\epsilon$.

We are now ready to prove the proposition. Since it follows from \eqref{eq:top<=stem} that $\limsup_{n \to \infty} \frac{\log \pn{n}}{\norm{\Delta_n}} \le h^{(s)}$, it is left to show that $\liminf_{n \to \infty} \frac{\log \pn{n}}{\norm{\Delta_n}} \ge h^{(s)}$. For every $a \in \alphabet$, there exist $b_1, b_2, \ldots b_k \in \alphabet$ such that $\pna{n}{a} \ge \prod_{l=1}^k \pnai{s_1}{n-1}{b_l}$. It can be deduced from above that
\[
\liminf_{n \to \infty} \frac{\log \pn{n}}{\norm{\Delta_n}} \ge \liminf_{n \to \infty} \frac{\log \pna{n}{a}}{\norm{\Delta_n}} \ge \liminf_{n \to \infty} \sum_{l=1}^k \frac{\log \pnai{s_l}{n-1}{b_l}}{\norm{\bar{\Delta}^{(s_l)}_{n-1}}} \frac{\norm{\bar{\Delta}^{(s_l)}_{n-1}}}{\norm{\Delta_{n-1}}} = h^{(s)}.
\]
The proof is then finished.
\end{proof}

The corollary below follows immediately from Proposition \ref{prop:graph_and_hom_shift} and Theorem \ref{thm:group_limit}.

\begin{corollary}
    If $X_{\mathbf{A}}$ is a hom Markov tree shift, then the topological entropy $h$ exists and equals $h^{(s)}$ if $A$ is primitive.
\end{corollary}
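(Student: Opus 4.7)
The plan is to observe that this corollary is a direct combination of the two preceding results, so the proof should be essentially a one-line chaining argument. First, I would invoke Proposition \ref{prop:graph_and_hom_shift}(ii), which tells us that for a hom Markov tree shift $X_{\mathbf{A}}$, the graph representation $\mathbf{G} = (\mathbf{V}, \mathbf{E})$ is strongly connected and contains a pivot if and only if the (common) adjacency matrix $A$ is primitive. Since the hypothesis of the corollary is precisely that $A$ is primitive, we obtain that $\mathbf{G}$ is strongly connected and admits a pivot.

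Next, I would feed this structural conclusion about $\mathbf{G}$ into Theorem \ref{thm:group_limit}, whose hypothesis is exactly that the graph representation of a Markov tree shift is strongly connected and admits a pivot. Its conclusion is that the topological entropy $h = \lim_{n \to \infty} \frac{\log \pn{n}}{\norm{\Delta_n}}$ exists and coincides with the stem entropy $h^{(s)}$. This immediately delivers the desired statement.

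There is no real obstacle here, since both ingredients are already in place: Proposition \ref{prop:graph_and_hom_shift} translates the combinatorial hypothesis on $A$ into the graph-theoretic hypothesis on $\mathbf{G}$, and Theorem \ref{thm:group_limit} then yields the entropy equality. The only minor thing to note is that one should check that the primitivity assumption on $K$ (which is a standing assumption in Section 4) is still in force so that all the stem-entropy machinery, in particular Theorem \ref{exists_inf} used inside the proof of Theorem \ref{thm:group_limit}, applies; but this is part of the standing hypothesis of the section rather than something to verify separately.
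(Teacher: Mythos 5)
Your proof is correct and matches the paper exactly: the authors state that this corollary ``follows immediately from Proposition \ref{prop:graph_and_hom_shift} and Theorem \ref{thm:group_limit},'' which is precisely the chaining you describe. Your added remark about the standing primitivity assumption on $K$ is a reasonable bit of due diligence but not something the paper spells out.
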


Finally, we show that the assumption in Theorem \ref{thm:group_limit} is finitely checkable.

\begin{proposition} \label{prop:pivot_checkable}
	Let $X_{\mathbf{A}}$ be a Markov tree shift. Suppose $\mathbf{G}=(\mathbf{V},\mathbf{E})$ is a graph representation of $X_{\mathbf{A}}$. It is finitely checkable whether $\mathbf{G}$ admits a pivot and whether $\mathbf{G}$ is strongly connected.
\end{proposition}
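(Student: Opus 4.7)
The plan is to exploit the finiteness of the vertex set $\mathbf{V} = \alphabet \times S_k$, which makes $\mathbf{G}$ a finite directed graph, and to reduce both defining conditions to assertions about reachability and Boolean powers of the adjacency matrix $A_{\mathbf{G}}$.

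For strong connectedness, I would invoke the standard fact that in a finite directed graph on $m$ vertices any available walk can be shortened to one of length at most $m$. Thus $\mathbf{G}$ is strongly connected exactly when the Boolean sum $\bigvee_{N=1}^{|\mathbf{V}|} A_{\mathbf{G}}^N$ has every entry equal to $1$, a manifestly finite computation (and one already optimised by classical strongly-connected-component algorithms such as Tarjan's, though that optimisation is not needed here).

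For the pivot condition the existential quantifier over $N \in \Nint$ must be replaced by a bounded search, and this is the main substantive point. My plan is to consider the Boolean powers $B_N \in \{0,1\}^{\mathbf{V} \times \mathbf{V}}$ defined by $B_N(v,w) = 1$ iff $v \dhxrightarrow{N} w$, which satisfy the deterministic recursion $B_{N+1} = B_N \odot A_{\mathbf{G}}$ under Boolean matrix multiplication (logical-or in place of addition). Since the $B_N$'s live in the finite set $\{0,1\}^{\mathbf{V} \times \mathbf{V}}$, the sequence $(B_N)_{N \geq 1}$ must exhibit a first repetition $B_M = B_{M'}$ with $M' < M \leq 2^{|\mathbf{V}|^2} + 1$, and determinism of the recursion forces $\{B_N : N \geq 1\} = \{B_1, \ldots, B_{M-1}\}$. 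I would then enumerate the $B_N$'s up to this repetition and declare $(a, s_i)$ a pivot exactly when some $B_N$ in the list, together with some $s_j \in S_k$, satisfies $B_N((a, s_i), (b, s_j)) = 1$ for every $b \in \alphabet$.

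The obstacle I expect is precisely the passage from the unbounded quantifier $\exists N \in \Nint$ to a bounded search; the pigeonhole argument above resolves it, but one must verify carefully that once a Boolean matrix is revisited no subsequent one can be new, which follows from the determinism of the recursion. Everything else is routine.
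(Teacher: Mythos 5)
Your proposal is correct and follows essentially the same route as the paper: strong connectedness is reduced to irreducibility of the finite adjacency matrix, and the pivot condition is reduced to a bounded search over the Boolean powers of $A_{\mathbf{G}}$, using the pigeonhole bound $2^{\lvert\mathbf{V}\rvert^2}$ and the determinism (hence eventual periodicity) of the Boolean-power recursion. The only cosmetic difference is that you make the walk-length bound $\lvert\mathbf{V}\rvert$ for strong connectedness explicit, which the paper leaves implicit.
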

\begin{proof}
	Since $\mathbf{G}$ is strongly connected if and only if the adjacency matrix $A_\mathbf{G}$ associated with $\mathbf{G}$ is irreducible, it is clearly finitely checkable. To see the admittance of pivot is also finitely checkable, we define the matrix $A_n$ for all $n \in \Zint_+$ as follows:
	\[
	A_n((a,s_i),(b,s_j))=\begin{cases}
		1 & \text{if } (A_\mathbf{G})^n((a,s_i),(b,s_j))=1, \\
		0 & \text{otherwise.}
	\end{cases}
	\]
	It is then clear that $\mathbf{G}$ admits a pivot if and only if there exist $s_i,s_j \in S_k$, $a \in \alphabet$, and $n \in \Zint_+$ such that $A_n((a,s_i),(b,s_j))=1$ for all $b \in \alphabet$. Since $\norm{\{A_n: n \ge 0\}} \le 2^{\norm{\mathbf{V}}^2}$ and $A_n$ is eventually periodic, there exist $0 \le N_1 \le N_2 \le 2^{\norm{\mathbf{V}}^2}$ such that $A_{N_1+n}=A_{N_2+n}$ for all $n \ge 0$. In other words, $\mathbf{G}$ admits a pivot if and only if there exist $s_i,s_j \in S_k$, $a \in \alphabet$, and $1 \le n \le 2^{\norm{\mathbf{V}}^2}$ such that $A_n((a,s_i),(b,s_j))=1$ for all $b \in \alphabet$. This implies that admittance of a pivot is finitely checkable.
\end{proof}

% -----------------------------------------------------
\begin{appendix}
	\section{An Attempt toward the Existence of Topological Entropy}
	
	This section presents an attempt toward the existence of topological entropy by exploiting the composition of colors on the boundary of all $n$-blocks. Suppose $X_{\mathbf{A}}$ is given. We denote by a vector $\mathbf{v} \in \Zint_+^{\norm{\alphabet} \norm{S_k}}$ the product $\prod_{(a,s_i)} (\pnaip{s_i}{n}{a})^{\mathbf{v}_{(a,s_i)}}$. Note that
	$$
	W:=\{\sum_{\mathbf{v} \in \Zint_+^{\norm{\alphabet} \norm{S_k}}} r_{\mathbf{v}} \cdot \mathbf{v}: r_{\mathbf{v}} \in \Zint, r_{\mathbf{v}} \ne 0 \text{ for finitely many } \mathbf{v} \in \Zint_+^{\norm{\alphabet} \norm{S_k}}\}
	$$
	is a vector space with a basis $\Zint_+^{\norm{\alphabet} \norm{S_k}}$. Define the linear transformation $F:W \to W$ as
	\[
	(F(\mathbf{v}))_{(a,s_i)}=\begin{cases}
		1, & \text{if } \mathbf{v}_{(a,s_i)} > 0;\\
		0, & \text{if } \mathbf{v}_{(a,s_i)} = 0,
	\end{cases}
	\]
	and the simplified representation $F^\ast(\mathbf{v})$ of $F(\mathbf{v})$ as
	\[
	F^\ast(\sum_{\mathbf{v}} r_{\mathbf{v}} \cdot \mathbf{v})=\sum_{\mathbf{v}} r'_{\mathbf{v}} \cdot F(\mathbf{v}),
	\]
	where
	\[
	r'_\mathbf{v}=\begin{cases}
		1, & \text{if } r_\mathbf{v} > 0;\\
		0, & \text{if } r_\mathbf{v} = 0.
	\end{cases}
	\]
	Define the shift transformation $\sigma:W \to W$ by 
	\[\sigma(\mathbf{v})=\sigma\left(\prod_{(a,s_i)} {\pnaip{s_i}{n}{a}}^{\mathbf{v}_{(a,s_i)}}\right)=\prod_{(a,s_i)} \sum_{b} \prod_{j:K(i,j)=1} A_i(a,b) \pnaip{s_j}{b}{n}\]
	Suppose $x, y \in W$. We denote $x \succeq y$ if every term $\mathbf{v}$ appearing in $F^\ast(x)$ admits a term $\mathbf{w}$ appearing in $F^\ast(y)$ satisfying $\mathbf{v}_{(a,s_i)} \ge \mathbf{w}_{(a,s_i)}$ for every $a \in \alphabet$ and every $s_i \in S_k$.
	
	\begin{proposition}
		$\lim_{n \to \infty} \frac{\log \pn{n}}{\norm{\Delta_n}}$ exists and equals $h^{(s)}$ if there exist $N_1, N_2 \in \Nint$ and $s_i \in S_k$ such that $\sigma^{N_1}(\pni{s_i}{n}) \succeq \sigma^{N_2}(\pn{n})$
	\end{proposition}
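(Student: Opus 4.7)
The inequality $\limsup_{n\to\infty}\frac{\log \pn{n}}{\norm{\Delta_n}}\leq h^{(s)}$ is already at hand from \eqref{eq:top<=stem}, so the whole task is to secure the matching lower bound $\liminf_{n\to\infty}\frac{\log \pn{n}}{\norm{\Delta_n}}\geq h^{(s)}$. Once this is done the two coincide, the limit exists, and Theorem \ref{exists_inf} forces its value to be $h^{(s)}$.

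The first step is to make the symbolic hypothesis concrete: I would unfold both $\sigma^{N_1}(\pni{s_i}{n})$ and $\sigma^{N_2}(\pn{n})$ as explicit finite polynomials in the indeterminates $\{\pnaip{s_j}{n}{b}: b\in\alphabet,\, s_j\in S_k\}$. By construction, substituting the actual counts returns $\pni{s_i}{n+N_1}$ and $\pn{n+N_2}$ respectively, while the index set of monomials and their coefficients depend only on the fixed data $K$, $\mathbf{A}$, $N_1$, $N_2$, $s_i$, and not on $n$. Lemma \ref{dec}(ii) then supplies the volume bookkeeping that every monomial $M_\mathbf{w}=\prod_{(b,s_j)}(\pnaip{s_j}{n}{b})^{w_{(b,s_j)}}$ must obey: the weighted sum $\sum_{(b,s_j)} w_{(b,s_j)}\norm{\bar{\Delta}^{(s_j)+}_n}$ equals the total volume being enumerated up to a bounded top-part constant, and this is the same for every monomial in a given expansion.

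The heart of the argument is to convert the symbolic hypothesis $\sigma^{N_1}(\pni{s_i}{n})\succeq \sigma^{N_2}(\pn{n})$ into a genuine numerical lower bound on $\pn{n+N_2}$. Pick a monomial $\mathbf{v}$ in the expansion of $\pni{s_i}{n+N_1}$ whose value realises (up to a multiplicative constant) the maximum: by Theorem \ref{exists_inf} combined with the primitivity of $K$, this maximum is of order $e^{h^{(s)}\norm{\bar{\Delta}^{(s_i)}_{n+N_1}}}$. The hypothesis then furnishes a monomial $\mathbf{w}$ in the expansion of $\pn{n+N_2}$ whose support is contained in that of $\mathbf{v}$. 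Combining the volume identity for $\mathbf{w}$ (whose total weighted volume is $\norm{\Delta_{n+N_2}}$ up to a constant) with the uniform exponential growth rate of the variables appearing in $\mathrm{supp}(\mathbf{w})$, I would argue that $M_\mathbf{w}\geq e^{(h^{(s)}-\epsilon)\norm{\Delta_{n+N_2}}}$ for any preassigned $\epsilon>0$ and all sufficiently large $n$. Since $\pn{n+N_2}\geq M_\mathbf{w}$, taking logarithms, dividing by $\norm{\Delta_{n+N_2}}$, and letting $n\to\infty$ delivers $\liminf_{n\to\infty}\frac{\log \pn{n}}{\norm{\Delta_n}}\geq h^{(s)}-\epsilon$, and the arbitrariness of $\epsilon$ closes the gap.

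The principal obstacle is converting the support-level information delivered by $\succeq$ into a quantitative estimate. Because $F^\ast$ strips away exponent multiplicities, the containment $\mathrm{supp}(\mathbf{w})\subseteq\mathrm{supp}(\mathbf{v})$ does not by itself imply any pointwise inequality between $M_\mathbf{v}$ and $M_\mathbf{w}$; the bridge must come from combining the rigid volume identity of Lemma \ref{dec}(ii) with the equal-rate asymptotics of the individual variables $\pnaip{s_j}{n}{b}$ on $\mathrm{supp}(\mathbf{w})$, so that support alone determines the leading exponential rate. A secondary, routine point is verifying via Lemma \ref{lim2}(i) that the ratio $\norm{\bar{\Delta}^{(s_i)}_{n+N_1}}/\norm{\Delta_{n+N_2}}$ stays bounded away from zero, which is needed so that comparisons with $\pni{s_i}{n+N_1}$ in the intermediate estimates do not lose the desired exponential rate.
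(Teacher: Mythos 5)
Your proposal is correct and follows essentially the same route as the paper: select a monomial of $\pni{s_i}{n+N_1}$ realizing the maximal rate $h^{(s)}$, use the individual upper bounds $\log \pnaip{s_j}{n}{b} \le (h^{(s)}+\epsilon)\norm{\bar{\Delta}^{(s_j)}_n}$ together with the volume identity to force every variable on that monomial's support to grow at rate at least $h^{(s)}-O(M)\epsilon$, and then transfer this lower bound to the dominated monomial of $\pn{n+N_2}$ supplied by $\succeq$. The ``principal obstacle'' you single out is exactly the step the paper resolves with its $h^{(s)}-2M\epsilon$ estimate, so there is no substantive difference between the two arguments.
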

	\begin{proof}
		Denote $M = \norm{\{g \in G: \norm{g} = N_1\}}$, $x=\sigma^{N_1}(\pni{s_i}{n})$ and $y=\sigma^{N_2}(\pn{n})$. Since $x \succeq y$, every term $\mathbf{v}$ appearing in $F^\ast(x)$ admits a term $\phi(\mathbf{v})$ appearing in $F^\ast(y)$ satisfying $\mathbf{v}_{(a,s_i)} \ge \phi(\mathbf{v})_{(a,s_i)}$ for every $a \in \alphabet$ and every $s_i \in S_k$. In this proof, we denote $[n,\mathbf{v}]$ for every $\mathbf{v}=\prod_{(a,s_i)} (\pnaip{s_i}{n}{a})^{\mathbf{v}_{(a,s_i)}} \in W$ for an emphasis on the size of the block. 
		
		Note that since $\lim_{n \to \infty} \frac{\log \pni{s_i}{n}}{\norm{\bar{\Delta}^{(s_i)}}} = h^{(s)}$, there exists 
		\[
		[n, \mathbf{v}_n]=(\pnaip{s_{l_1}}{n}{a_{n;1}})^{\mathbf{v}_{(a_{n;1},s_{l_1})}} \cdot \cdots \cdot (\pnaip{s_{l_M}}{n}{a_{n;M}})^{\mathbf{v}_{(a_{n;M},s_{l_M})}}
		\] appearing in $x$ such that $\lim_{n \to \infty} \frac{\log[n, \mathbf{v}_n]}{\norm{\bar{\Delta}^{(s_i)}_n}}=h^{(s)}$. Hence, for every $\epsilon > 0$, there exists $N \in \Nint$ such that 
		\[\frac{\log[n, \mathbf{v}_n]}{\norm{\bar{\Delta}^{(s_i)}_n}} > h^{(s)}-\epsilon\] 
		and that \[\frac{\log \pnaip{s_j}{n}{b}}{\norm{\bar{\Delta}^{(s_j)}_n}} < h^{(s)}-\epsilon\]
		for every $n \ge N$, $b \in \alphabet$ and $s_j \in S_k$. Hence,
		\[
		\frac{\log \pnaip{s_{l_m}}{n}{a_{n;m}}}{\norm{\bar{\Delta}^{(s_j)}_n}} \frac{(\mathbf{v}_n)_{(a_m,s_{l_m})}}{M} \ge (h^{(s)}-\epsilon) - (h^{(s)}+\epsilon) \frac{M-(\mathbf{v}_n)_{(a_m,s_{l_m})}}{M},
		\]
		and thus \[
		\frac{\log \pnaip{s_{l_m}}{n}{a_m}}{\norm{\bar{\Delta}^{(s_j)}_n}} \ge h^{(s)}-2 M \epsilon.
		\]
		Now observe that
		\[
		\frac{\log \pn{n}}{\norm{\Delta_n}} \ge \frac{\phi([n,\mathbf{v}_n])}{\norm{\Delta_n}} \ge h^{(s)}-2 M \epsilon,
		\]
		for all $n \ge N$. The proof is thus finished.
	\end{proof}

% -----------------------------------------------------
	\section{Computation of Stem Entropy}
		In this section, we provide the pseudo codes for \textbf{1.}~computation for topological entropy of Markov tree shift on the Cayley graph and \textbf{2.}~computation for stem entropy of Markov tree shift, shown in Algorithm \ref{alg:topological_entropy} and Algorithm \ref{alg:stem_entropy} respectively. In the following, we denote by $\odot$ the entrywise product of vectors.
	
	\begin{algorithm} \label{alg:topological_entropy}
		\caption{Topological entropy of hom Markov tree shift on the Cayley graph}
		\SetKwFunction{nte}{normalized\_tree\_entropy}
		\SetKwInOut{Input}{input}
		\SetKwInOut{Output}{output}
		\SetKwProg{Fn}{Function}{}{end}
		\SetKw{Break}{break}
		
		\Input{
			\vspace{-0.2em} $\mathbf{A}=(A_1,A_2,\cdots, A_d)$: $d$ binary matrices of dimension $k$. \\
			$iter$: maximum of iterations in execution \\
			$\epsilon$: threshold for convergence
		}
		
		\Output{$h$: approximation of entropy, where $h_n:=\frac{\log \max_a \pna{n}{a}}{\lvert \Delta_n \rvert}$.}
		\BlankLine
		
		\Fn{\nte{$\mathbf{A}$,iter,$\epsilon$}}{
			$\bar{\mathbf{p}}_0=\left(\pnab{0}{1},\pnab{0}{2},\cdots,\pnab{0}{k}\right)^t \leftarrow \left(1,1,\cdots,1\right)^t$\;
			$r_0 \leftarrow 1$\;
			$t_0 \leftarrow \log r_0$\;
			$h_0 \leftarrow t_0 / \lvert \Delta_0 \rvert$\;
			\For{$n \in \{1, 2, \cdots, iter-1\}$}{
			    $\bar{\mathbf{p}}_n=(\pnab{n}{1},\pnab{n}{2},\cdots,\pnab{n}{k})^t \leftarrow (A_1 \bar{\mathbf{p}}_{n-1}) \odot \cdots \odot (A_d \bar{\mathbf{p}}_{n-1})$\;
				$r_n \leftarrow \max_a \pnab{n}{a}$\;
				$\bar{\mathbf{p}}_n \leftarrow \bar{\mathbf{p}}_n / r_n$\;
				$t_n \leftarrow d \cdot t_{n-1} + \log r_n$\;
				$h_n \leftarrow t_n / \norm{\Delta_n}$\;
				\If{$\norm{h_n-h_{n-1}} < h_{n-1} \cdot \epsilon$ or $h_{n} < \epsilon$}{
					\Break\;
				}
			}
			\Return $h$
		}
	\end{algorithm}
	\begin{remark}
		The idea behind Algorithm \ref{alg:topological_entropy} is given as follows. Suppose 
		\[
		\begin{cases}
		    \mathbf{p}_n=f(\mathbf{p}_{n-1}):=(A_1 \mathbf{p}_{n-1}) \odot \cdots \odot (A_d \mathbf{p}_{n-1}) \\
		    \mathbf{p}_{0}=\left(1,1,\cdots,1\right)^t.
		\end{cases}
		\]
		It is shown (see for example \cite{BC-N2017}) that the above system is exactly the vector of the number of blocks:
		\[
		\mathbf{p}_n=(\pna{n}{1},\pna{n}{1}, \cdots, \pna{n}{k}).
		\]
		Let $\{r_n > 0: n \ge 0\}$ be a given sequence of positive real numbers. Define the normalized system as 
		\[
		\begin{cases}
		    \bar{\mathbf{p}}_n=g(\bar{\mathbf{p}}_{n-1}):=\frac{f(\bar{\mathbf{p}}_{n-1})}{r_n},\\
		    \bar{\mathbf{p}}_{0}=(1/r_0,1/r_0,\cdots,1/r_0)^t
		\end{cases}
		\]
		It is noteworthy that the following equality holds:
		\begin{align*}
			\mathbf{p}_n&=f^n(\mathbf{p}_0)\\
			&=g^n(\bar{\mathbf{p}}_0) \cdot r(0)^{d^n} r(1)^{d^{n-1}} \cdots r(n)^{d^0} \\
			&=\bar{\mathbf{p}}_n \cdot r(0)^{d^n} r(1)^{d^{n-1}} \cdots r(i)^{d^0}
		\end{align*}
		Since $r_n$ is chosen to be the maximal element in $\mathbf{p}_n$ in Algorithm \ref{alg:topological_entropy}, the maximal element in $\bar{\mathbf{p}}(i)$ is 1 and thus 
		\begin{equation} \label{eq:relation1}
			t_n=\log \max_a \pna{n}{a} = d^n \log r_0 + d^{n-1} \log r_1 + \cdots + d^{0} \log r_n, 
		\end{equation}
		and
		\[
		h_n=\frac{\log \max_a \pna{n}{a}}{\lvert \Delta_n \rvert}.
		\] 
		In fact, if $r_n$ is defined as in the algorithm, then $r_n$ is a rational number and
		\begin{align}
			h(X_\mathbf{A})= \lim_{n \to \infty} \frac{\log \max_a \pna{n}{a}}{d^{n+1}/{d-1}}=\sum_{n=0}^{\infty} \log r_n \cdot \frac{d-1}{d^{n+1}}.
		\end{align}
		In particular, if $X_{(A,A,\cdots,A)}$ is a hom Markov tree shift with $A$ an essential matrix , i.e., for every $b \in \alphabet$ there exists $b \in \alphabet$ satisfying $A(a,b)=1$, then $\norm{\alphabet}^d \ge r_n \ge 1$ for all $n \ge 0$ and
		\[
		\sum_{n=0}^{N} \log r_n \cdot \frac{d-1}{d^{n+1}} \le h(X_\mathbf{A}) \le \sum_{n=0}^{N} \log r_n \cdot \frac{d-1}{d^{n+1}} + \sum_{n=N+1}^{\infty} d \log \norm{\alphabet} \cdot \frac{d-1}{d^{n+1}}.
		\]
	\end{remark}
	
	\begin{algorithm} \label{alg:stem_entropy}
		\caption{Stem entropy of Markov tree shift}
		\SetKwFunction{nte}{normalized\_mctree\_entropy}
		\SetKwInOut{Input}{input}
		\SetKwInOut{Output}{output}
		\SetKwProg{Fn}{Function}{}{end}
		
		\Input{
			\vspace{-0.2em} $K$: binary matrix of dimension $k$. \\
			$\mathbf{A}=(A_1,A_2,\cdots, A_d)$: $d$ binary matrices of dimension $k$. \\
			$iter$: maximum of iterations in execution \\
			$\epsilon$: threshold for convergence
		}
		
		\Output{$h^{(s_j)}$: approximation of entropy, where $h^{(s_j)}_n:=\frac{\log \max_a \pnai{s_j}{n}{a}}{\lvert \Delta_n \rvert}$.}
		\BlankLine
		
		\Fn{\nte{$\mathbf{A}$,iter,$\epsilon$}}{
			\For{$j \in \{1, 2, \cdots, d\}$}{
				$\bar{\mathbf{p}}^{(s_j)}_0=\left(\pnaib{s_j}{0}{1},\pnaib{s_j}{0}{2},\cdots,\pnaib{s_j}{0}{k}\right)^t \leftarrow \left(1,1,\cdots,1\right)^t$\;
				$r^{(s_j)}_n \leftarrow \max_a \pnaib{s_j}{n}{a}$\;
				$t^{(s_j)}_0 \leftarrow \log r^{(s_j)}_0$\;
				$h^{(s_j)}_0 \leftarrow s^{(s_j)}_0 / \norm{\bar{\Delta}^{(s_j)}_0}$\;
			}
			\For{$n \in \{1, 2, \cdots, iter-1\}$}{
				\For{$j \in \{1, 2, \cdots, d\}$}{
				    $\bar{\mathbf{p}}^{(s_j)}_n=\left(\pnaib{s_j}{n}{1},\pnaib{s_j}{n}{2},\cdots,\pnaib{s_j}{n}{k}\right)^t \leftarrow (A_1 \mathbf{p}^{(s_1)}_{n-1})^{K(s_j,s_1)} \odot \cdots \odot (A_d \mathbf{p}^{(s_d)}_{n-1})^{K(s_j,s_d)}$\;
					$r^{(s_j)}_n \leftarrow 1$\;
					$\bar{\mathbf{p}}^{(s_j)}_n=\left(\pnaib{s_j}{n}{1},\pnaib{s_j}{n}{2},\cdots,\pnaib{s_j}{n}{k}\right)^t \leftarrow \bar{\mathbf{p}}^{(s_j)}_n / r^{(s_j)}_n$\;
					$t^{(s_j)}_n \leftarrow K(s_j,:) \cdot \left(t^{(s_1)}_{n-1},\cdots,t^{(s_d)}_{n-1}\right) + \log r^{(s_j)}_n$\;
					$h^{(s_j)}_n \leftarrow t^{(s_j)}_n / \norm{\bar{\Delta}^{(s_j)}_n}$\;
				}
				\If{$\sum_{j=1}^d \norm{h^{(s_j)}_n-h^{(s_j)}_{n-1}} < \sum_{j=1}^d h^{(s_j)}_{n-1} \cdot \epsilon$ or $h^{(s_j)}_n < \epsilon$}{
					break\;
				}
			}
			\Return $\left(h^{(s_1)};h^{(s_d)};\cdots;h^{(s_d)}\right)$
		}
	\end{algorithm}
	\begin{remark}
		As an analogy of Algorithm \ref{alg:topological_entropy}, the numbers of blocks satisfy the following recursive system.
		\[
		\begin{cases}
		\mathbf{p}^{(s_j)}_n=(A_1 \mathbf{p}^{(s_1)}_{n-1})^{K(s_j,s_1)} \odot \cdots \odot (A_d \mathbf{p}^{(s_d)}_{n-1})^{K(s_j,s_d)} \\
		\mathbf{p}^{(s_j)}_0=(1,1,\cdots,1)^t.
		\end{cases}
		\]
		In the same manner, given any positive sequence of $\{r^{(s_j)}_n: n \ge 0, s_j \in G\}$, one may define the normalized system as 
	    \[
		\begin{cases}
		\bar{\mathbf{p}}^{(s_j)}_n=(A_1 \bar{\mathbf{p}}^{(s_1)}_{n-1})^{K(s_j,s_1)} \odot \cdots \odot (A_d \bar{\mathbf{p}}^{(s_d)}_{n-1})^{K(s_j,s_d)} / r^{(s_j)}_n \\
		\bar{\mathbf{p}}^{(s_j)}_0=(1/r^{(s_j)}_0,1/r^{(s_j)}_0,\cdots,1/r^{(s_j)}_0)^t.
		\end{cases}
		\]
		Denote by $\mathbf{f}=(f_1, f_2, \cdots, f_d)$ and $\mathbf{g}=(g_1, g_2, \cdots, g_d)$ the map $\mathbf{p}_{n-1} \stackrel{\mathbf{f}}{\rightarrow} \mathbf{p}_{n}$ and the map $\bar{\mathbf{p}}_{n-1} \stackrel{\mathbf{g}}{\rightarrow} \bar{\mathbf{p}}_{n}$, respectively. Similar to the above,
		\begin{align}
			\mathbf{p}^{(s_j)}_n&=f_j(\mathbf{f}^{n-1}(\mathbf{p}^{(s_1)}_0,\mathbf{p}^{(s_2)}_0,\cdots,\mathbf{p}^{(s_d)}_0))\\
			&=g_j(\mathbf{g}^{n-1}(\bar{\mathbf{p}}^{(s_1)}_0,\bar{\mathbf{p}}^{(s_2)}_0,\cdots,\bar{\mathbf{p}}^{(s_d)}_0)) \cdot \exp(t^{(s_j)}_{n-1}) \\
			&=\bar{\mathbf{p}}^{(s_j)}_n \cdot \exp(t^{(s_j)}_n),
		\end{align}
		where $\max \bar{\mathbf{p}}^{(s_j)}_n = 1$ and
		\begin{equation} \label{eq:relation2}
			t^{(s_j)}_n=\log \max_a \pnai{s_j}{n}{a}.
		\end{equation}
		Hence
		\[
		    h^{(s_j)}_n=\frac{\log \max_a \pnai{s_j}{n}{a}}{\norm{\bar{\Delta}^{(s_j)}_n}},
		\] 
		which tends to the stem entropy of $X_{\mathbf{A}}$.
	\end{remark}
	The experiments are done with mpmath library of python under the following configuration: the precision digits for floating-point number dps=5000, the threshold $\epsilon=1E-50$, and the relations $K=(1,1,0,1;1,1,1,0;0,1,1,1;1,0,1,1)$.

    \begin{table}[]
        \centering
        \begin{tabular}{c|c|c|c|c}
        	$A_1$ & $A_2$ & stem entropy & topological entropy & iteration \\
        	\hline
        	$[0,1;1,1]$ & $[1,1;1,0]$ & 0.1261881372008 & 0.1261881372008 & 37\\
        	$[1,1;1,0]$ & $[1,1;1,0]$ & 0.2332621211030 & 0.2332621211030 & 34\\
        	$[0,1,0;1,0,1;0,1,0]$ & $[0,1,1;1,0,0;0,1,1]$ & 0.1681464340595 & 0.1681464340595 & 36
        \end{tabular}
        \caption{Numerical experiments on the stem entropy of $X_{A_1,A_2,A_1^t,A_2^t}$ over the free group ($\log$ is computed with base 10.)}
        \label{tab:my_label}
    \end{table}
    
    \begin{table}[]
        \centering
        \begin{tabular}{c|c|c|c|c}
        	$A_1$ & $A_2$ & stem entropy & topological entropy & iteration \\
        	\hline
        	$[0,1;1,1]$ & $[1,1;1,0]$ & 0.1261881372008 & 0.1261881372008 & 37\\
        	$[1,1;1,0]$ & $[1,1;1,0]$ & 0.2332621211030 & 0.2332621211030 & 34\\
        	$[0,1,0;1,0,1;0,1,0]$ & $[0,1,1;1,0,0;0,1,1]$ & 0.1681464340595 & 0.1681464340595 & 36
        \end{tabular}
        \caption{Numerical experiments on the stem entropy of $X_{A_1,A_2,A_1^t,A_2^t}$ over the free group ($\log$ is computed with base 10.)}
        \label{tab:free_group}
    \end{table}
    \begin{table}[]
        \centering
        \begin{tabular}{c|c|c|c|c}
        	$A_1$ & $A_2$ & stem entropy & topological entropy & iteration \\
        	\hline
        	$[1,1;1,0]$ & $[1,1;1,0]$ & 0.2178219813166 & 0.2178219813166 & 82\\
        	$[0,1;1,1]$ & $[0,1;1,1]$ & 0.2178219813166 & 0.2178219813166 & 82\\
        	$[0,1;1,1]$ & $[1,1;1,0]$ & 0.1267559612313 & 0.1267559612313 & 73\\
        	$[1,1;1,0]$ & $[0,1;1,1]$ & 0.1267559612313 & 0.1267559612313 & 73\\
        \end{tabular}
        \caption{Numerical experiments on the stem entropy of $X_{A_1,A_2}$ over Fibonacci-Cayley tree generate by $S_k$, where $K=[1,1;1,0]$ ($\log$ is computed with base 10.)}
        \label{tab:fibonacci_tree}
    \end{table}
\end{appendix}

\section*{Acknowledgment}
We are appreciated for the comments from the anonymous referees, which greatly improve the readability of the article.

\bibliographystyle{amsplain_abbrv}
\bibliography{grece}

\providecommand{\bysame}{\leavevmode\hbox to3em{\hrulefill}\thinspace}
\providecommand{\MR}{\relax\ifhmode\unskip\space\fi MR }
% \MRhref is called by the amsart/book/proc definition of \MR.
\providecommand{\MRhref}[2]{%
  \href{http://www.ams.org/mathscinet-getitem?mr=#1}{#2}
}
\providecommand{\href}[2]{#2}
\begin{thebibliography}{10}

\bibitem{AB-TCS2012}
N.~Aubrun and M.-P. B\'{e}al, \emph{Tree-shifts of finite type}, Theor. Comput.
  Sci. \textbf{459} (2012), 16--25.

\bibitem{AB-TCS2013}
\bysame, \emph{Sofic tree-shifts}, Theory Comput. Systems \textbf{53} (2013),
  621--644.

\bibitem{BC-TAMS2017}
J.-C. Ban and C.-H. Chang, \emph{Tree-shifts: Irreducibility, mixing, and chaos
  of tree-shifts}, Trans. Am. Math. Soc. \textbf{369} (2017), 8389--8407.

\bibitem{BC-N2017}
\bysame, \emph{Tree-shifts: The entropy of tree-shifts of finite type},
  Nonlinearity \textbf{30} (2017), 2785--2804.

\bibitem{BCH+-JSP2019}
J.-C. Ban, C.-H. Chang, N.-Z. Huang, and Y.-L. Wu, \emph{Decidability of
  irreducible tree shifts of finite type}, J. Stat. Phys. \textbf{177} (2019),
  1043--1062.

\bibitem{BCH-JAC2020}
J.-C. Ban, C.-H. Chang, and Y.-H. Huang, \emph{Complexity of shift spaces on
  semigroups}, J. Algebraic Combin. (2020), to appear.

\bibitem{Bow-1975}
R.~Bowen, \emph{Equilibrium states and the ergodic theory of {Anosov}
  diffeomorphisms}, Springer-Verlag, Berlin-New York, 1975.

\bibitem{CC-2010}
T.~{Ceccherini-Silberstein} and M.~Coornaert, \emph{Cellular automata and
  groups}, Springer-Verlag Berlin Heidelberg, 2010.

\bibitem{CCF+-TCS2013}
T.~{Ceccherini-Silberstein}, M.~Coornaert, F.~Fiorenzi, and Z.~\u{S}uni\'{c},
  \emph{Cellular automata between sofic tree shifts}, Theoret. Comput. Sci.
  \textbf{506} (2013), 79--101.

\bibitem{CM-PJM2018}
N.~Chandgotia and B.~Marcus, \emph{Mixing properties for hom-shifts and the
  distance between walks on associated graphs}, Pacific J. Math. \textbf{294}
  (2018), 41--69.

\bibitem{Chernov-2002}
N.~Chernov, \emph{Invariant measures for hyperbolic dynamical systems},
  Handbook of Dynamical Systems, vol.~1, Elsevier Science, 2002, pp.~321--407.

\bibitem{Dobrushin-TPA1968}
R.~L. Dobrushin, \emph{The description of a random field by means of
  conditional probabilities and conditions of its regularity}, Theory Probab.
  Appl. \textbf{13} (1968), 197--224.

\bibitem{Dobrushin-FAA1968}
\bysame, \emph{Gibbsian random fields for lattice systems with pairwise
  interactions}, Functional Anal. Appl. \textbf{2} (1968), 292--301.

\bibitem{Dobrushin-FAA1968a}
\bysame, \emph{The problem of uniqueness of a {Gibbsian} random field and the
  problem of phase transitions}, Functional Anal. Appl. \textbf{2} (1968),
  302--312.

\bibitem{Dobrushin-FAA1969}
\bysame, \emph{Gibbsian random fields. the general case}, Functional Anal.
  Appl. \textbf{3} (1969), 22--28.

\bibitem{FF-2012}
G.~Fici and F.~Fiorenzi, \emph{Topological properties of cellular automata on
  trees}, AUTOMATA and JAC 2012, 2012, pp.~255--266.

\bibitem{Georgii-2011}
H.-O. Georgii, \emph{Gibbs measures and phase transitions}, 2 ed., De Gruyter,
  2011.

\bibitem{Keller-1998}
G.~Keller, \emph{Equilibrium states in ergodic theory}, Cambridge University
  Press, 1998.

\bibitem{LR-CMP1969}
O.~E. Lanford and D.~Ruelle, \emph{Observables at infinity and states with
  short range correlations in statistical mechanics}, Commun. Math. Phys.
  \textbf{13} (1969), 194--215.

\bibitem{LM-1995}
D.~Lind and B.~Marcus, \emph{An introduction to symbolic dynamics and coding},
  Cambridge University Press, Cambridge, 1995.

\bibitem{Mueller-Hartmann-ZPB1975}
E.~M\"{u}ller-Hartmann, \emph{Phase transitions of continuous order: {Ising}
  model on a {Cayley} tree}, Z. Physik B \textbf{22} (1975), 59--67.

\bibitem{Mueller-Hartmann-ZPB1977}
\bysame, \emph{Theory of the {Ising} model on a {Cayley} tree}, Z. Physik B
  \textbf{27} (1977), 161--168.

\bibitem{MZ-PRL1974}
E.~M\"{u}ller-Hartmann and J.~Zittartz, \emph{New type of phase transition},
  Phys. Rev. Lett. \textbf{33} (1974), 893--897.

\bibitem{PS-TCS2018}
K.~Petersen and I.~Salama, \emph{Tree shift topological entropy}, Theoret.
  Comput. Sci. \textbf{743} (2018), 64--71.

\bibitem{PS-DCDS2020}
\bysame, \emph{Entropy on regular trees}, Discrete Contin. Dyn. Syst.
  \textbf{40} (2020), 4453--4477.

\bibitem{Piantadosi-DCDS2008}
S.~T. Piantadosi, \emph{Symbolic dynamics on free groups}, Discrete Contin.
  Dyn. Syst. \textbf{20} (2008), 725--738.

\bibitem{Rozikov-2013}
U.~A. Rozikov, \emph{Gibbs measures on cayley trees}, World Scientific
  Publishing Company, 2013.

\bibitem{Ruelle-1978}
D.~Ruelle, \emph{Thermodynamic formalism: The mathematical structures of
  classical equilibrium statistical mechanics}, 2 ed., Cambridge University
  Press, 2004.

\bibitem{Zachary-AP1983}
S.~Zachary, \emph{Countable state space {Markov} random fields and {Markov}
  chains on trees}, Ann. Prob. \textbf{11} (1983), 894--903.

\bibitem{Zachary-SPA1985}
\bysame, \emph{Bounded, attractive and repulsive {Markov} specifications on
  trees and on the one-dimensional lattice}, Stochastic Process. Appl.
  \textbf{20} (1985), 247--256.

\end{thebibliography}

\end{document}